\definecolor{DarkGreen}{rgb}{0.1,0.5,0.1}
\definecolor{DarkRed}{rgb}{0.5,0.1,0.1}
\definecolor{DBlue}{rgb}{0.1,0.1,0.7}
\definecolor{Gray}{rgb}{0.2,0.2,0.2}
\definecolor{forestgreen}{rgb}{0.13, 0.55, 0.13}
\definecolor{mahogany}{rgb}{0.75, 0.25, 0.0}
\numberwithin{equation}{section}
\definecolor{tensorblue}{rgb}{0.8,0.8,1}
\definecolor{tensorred}{rgb}{1,0.5,0.5}
\definecolor{tensorpurp}{rgb}{1,0.5,1}
\tikzset{ten/.style={fill=tensorblue}}
\tikzset{tenred/.style={fill=tensorred}}
\tikzset{tengreen/.style={fill=green!50!black!50}}
\tikzset{tenpurp/.style={fill=tensorpurp}}
\tikzset{tengrey/.style={fill=black!20}}
\tikzset{tenorange/.style={fill=orange!30}}
\tikzset{u/.style={fill=blue!20,draw=black}}
\tikzset{w/.style={fill=green!50!black!80,draw=black}}
\newcommand{\diagram}[1]{ \begin{array}{cc}{\begin{tikzpicture}[scale=.5,every node/.style={sloped,allow upside down},baseline={([yshift=+0ex]current bounding box.center)}] #1 \end{tikzpicture}} \end{array} }
\newcommand{\diagramsized}[2]{ \begin{array}{cc} {\begin{tikzpicture}[scale=#1,every node/.style={sloped,allow upside down},baseline={([yshift=+0ex]current bounding box.center)}] #2 \end{tikzpicture}} \end{array} }
\definecolor{awesome}{rgb}{0.33, 0.42, 0.18}
\DeclareMathOperator*{\E}{{\mathbb{E}}}
\declaretheorem[numberwithin=section]{theorem}
\declaretheorem[sibling=theorem]{lemma}
\declaretheorem[sibling=theorem]{corollary}
\declaretheorem[style=definition]{definition}
\declaretheorem[numbered=no,style=definition,name=Fact]{fact*}
\newtheorem*{theorem*}{Theorem}
\newtheorem*{corollary*}{Corollary}
\newtheorem*{proposition*}{Proposition}
\newtheorem*{lemma*}{Lemma}
\newtheorem*{claim*}{Claim}
\newtheorem*{problem*}{Problem}
\newcommand{\BB}[1]{\mathbb{#1}}
\newcommand{\argmin}[1]{\underset{#1}{\arg\!\min}}
\newcommand{\bigO}[1]{{O}\left( #1 \right)}
\newcommand{\bigOmega}[1]{\Omega\left( #1 \right)}
\newcommand{\bigTheta}[1]{\Theta\left( #1 \right)}
\newcommand{\cost}{{\mathsf{cost}}}
\newcommand{\cut}{{\mathsf{cut}}}
\newcommand{\Ge}{G_{E}}
\newcommand{\Gd}{G_{D}}
\newcommand{\Ee}{E_{E}}
\newcommand{\Ed}{E_{D}}
\newcommand{\Ve}{V_{E}}
\newcommand{\Vd}{V_{D}}
\newcommand{\Ne}{N_{E}}
\newcommand{\Nd}{N_{D}}
\newcommand{\GL}{L}
\newcommand{\GR}{R}
\newcommand{\GA}{G_{S}}
\newcommand{\EA}{E_{S}}
\newcommand{\stD}[1]{\st{D}(e_{#1})}
\newcommand{\stX}[1]{X(e_{#1})}
\newcommand{\stt}[1]{t(e_{#1})}
\newcommand{\R}{\BB{R}}
\newcommand{\underflow}[2]{\underset{\kern-60mm \overbrace{#1} \kern-60mm}{#2}}
\newcommand{\tsr}[1]{{\mathcal{#1}}}
\newcommand{\st}[1]{\mathcal{#1}}
\newcommand{\vcr}[1]{{#1}}
\newcommand{\mat}[1]{{#1}}
\newcommand{\Comment}[1]{{\small{\textit{//   #1}}}}
\DeclareMathOperator*{\Prob}{{\text{Pr}}}
\author{
  Linjian Ma, Edgar Solomonik \\
  Department of Computer Science, University of Illinois at Urbana-Champaign\\
  \texttt{ \{lma16, solomon2\}@illinois.edu} \\
}
\title{
Cost-efficient Gaussian tensor network embeddings for tensor-structured inputs
}
\begin{document}

\maketitle

\begin{abstract}
This work discusses tensor network embeddings, which are random matrices ($S$) with tensor network structure. These embeddings have been used to perform dimensionality reduction of tensor network structured inputs $x$ and accelerate applications such as tensor decomposition and kernel regression. Existing works have designed embeddings for inputs $x$ with \textit{specific} structures, such as the Kronecker product or Khatri-Rao product, such that the computational cost for calculating $Sx$ is efficient. We provide a systematic way to design tensor network embeddings consisting of Gaussian random tensors, such that for inputs with \textit{more general} tensor network structures, both the sketch size (row size of $S$) and the sketching computational cost are low.

We analyze general tensor network embeddings that can be reduced to a sequence of sketching matrices. We provide a sufficient condition to  quantify the accuracy of such embeddings and derive sketching asymptotic cost lower bounds using embeddings that satisfy this condition and have a sketch size lower than any input dimension. We then provide an algorithm to efficiently sketch input data using such embeddings. The sketch size of the embedding used in the algorithm has a linear dependence on the number of sketching dimensions of the input. Assuming tensor contractions are performed with classical dense matrix multiplication algorithms, this algorithm achieves asymptotic cost within a factor of $O(\sqrt{m})$ of our cost lower bound, where $m$ is the sketch size. Further, when each tensor in the input has a dimension that needs to be sketched, this algorithm yields the optimal sketching asymptotic cost. We apply our sketching analysis to inexact tensor decomposition optimization algorithms. We provide a sketching algorithm for CP decomposition that is asymptotically faster than existing work in multiple regimes, and show optimality of an existing algorithm for tensor train rounding.

\end{abstract}

\section{Introduction}\label{sec:intro}

Sketching techniques, which randomly project high-dimensional data onto lower dimensional spaces while
still preserving relevant information in the data~\cite{sarlos2006improved}, have been widely used in numerical linear algebra, including for regression, low-rank approximation, and matrix multiplication~\cite{woodruff2014sketching}. One key step of sketching algorithms is to design an embedding matrix $\mat{S}\in\R^{m\times n}$ with $m\ll n$, such that  for any input (also called data throughout the paper) $\vcr{x}\in\R^{n}$, the projected vector norm is $(1\pm \epsilon)$-close to the input vector, $\|\mat{S}\vcr{x}\|_2 = (1\pm \epsilon)\|\vcr{x}\|_2$, with probability at least $1-\delta$ (defined as $(\epsilon,\delta)$-accurate embedding throughout the paper), and the multiplication $\mat{S}\vcr{x}$  can be computationally efficient. $\mat{S}$ is commonly chosen as a random matrix with each element being an i.i.d. Gaussian variable when $\vcr{x}$ is dense, or a random sparse matrix when $\vcr{x}$ is sparse, etc.

In this work, we focus on the case where $\vcr{x}$ has a tensor network structure. 
A tensor network~\cite{orus2014practical} uses a set of (small) tensors, where some or all of their dimensions are contracted according to some pattern, to implicitly represent a tensor.
Tensor network structured data is commonly seen in multiple applications, including kernel based statistical learning~\cite{pham2013fast,ahle2020oblivious,woodruff2022leverage,meister2019tight}, machine learning and data mining via tensor decomposition
methods~\cite{anandkumar2014tensor,sidiropoulos2017tensor,kolda2009tensor,sidiropoulos2017tensor}, and simulation of quantum systems~\cite{vidal2003efficient,ma2022low,schollwock2005density,hohenstein2012tensor,hummel2017low}. 
Commonly used embedding matrices are sub-optimal for sketching many such data.
For example, consider the case where $\vcr{x}\in\R^{s^N}$ is a chain of Kronecker products,
$   \vcr{x} = \vcr{x}_1 \otimes \cdots \otimes \vcr{x}_N,$
where $\vcr{x}_i\in\R^{s}$ for $i\in\{1,\ldots,N\}$.
 If $\mat{S}\in\R^{m \times s^N}$ is a Gaussian matrix, the multiplication $\mat{S}\vcr{x}$ has a computational cost of $\Omega\left(ms^N\right)$, and the exponential dependence on the tensor order $N$ makes the calculation impractical when $N$ or $s$ is large. 

The computational cost of the multiplication $\mat{S}\vcr{x}$ can be reduced when $\mat{S}$ has a  structure that can be easily multiplied with the target data.
One example is when $\mat{S}$ has a Kronecker product structure, $\mat{S} = \mat{S}_1\otimes \cdots \otimes \mat{S}_N$ and each $\mat{S}_i\in \R^{m^{1/N}\times s}$. When $\vcr{x} = \vcr{x}_1 \otimes \cdots \otimes \vcr{x}_N$, $\mat{S}\vcr{x}$ can then be calculated efficiently via $(\mat{S}_1\vcr{x}_1) \otimes \cdots \otimes (\mat{S}_N\vcr{x}_N)$, reducing the cost to $\bigO{Nm^{1/N}s + m}$.
Another example is when $\mat{S}$ has a Khatri-Rao product structure, $\mat{S} = (\mat{S}_1\odot \cdots \odot \mat{S}_N)^T$ and each $\mat{S}_i\in \R^{s\times m}$. $\mat{S}\vcr{x}$ can then be calculated efficiently via $(\mat{S}_1^T\vcr{x}_1) * \cdots * (\mat{S}_N^T\vcr{x}_N)$, where $*$ denotes the Hadamard product, which reduces the cost to $\bigO{Nms}$.
However, tensor-network-structured embedding matrices that can be easily multiplied with data may not necessarily minimize computational cost,
since the sketch size sufficient for accurate embedding can also increase. 
For example, 
the sketch size necessary for both Kronecker product and Khatri-Rao product embeddings  to be $(\epsilon,\delta)$-accurate is at least
exponential in $N$, which is inefficient for large tensor order $N$~\cite{ahle2020oblivious}. 
To find embeddings that are both accurate and computationally efficient, 
it is therefore of interest to investigate tensor network structures that can both yield small sketch size and be multiplied with data efficiently.

Existing works discuss tensor network embeddings with more efficient sketch size than Kronecker and Khatri-Rao product structure, such as tensor train~\cite{rakhshan2020tensorized} and balanced binary tree~\cite{ahle2020oblivious}.
In particular, Ahle et al.~\cite{ahle2020oblivious} designed a balanced binary tree structured embedding and showed that the sketch size 
sufficient for $(\epsilon,\delta)$-accurate embedding can have only linear dependence on $N$.
Using this embedding to sketch Kronecker product structured data yields a sketching cost that only has a  polynomial dependence on both $N$ and $s$. 
However, for data with other tensor network structures, these embeddings may not be the most computationally efficient.

\paragraph{Our contributions}
  \begin{wrapfigure}{r}{0.22\textwidth}
\vspace{-2mm}
\centering
\includegraphics[width=.22\textwidth, keepaspectratio]{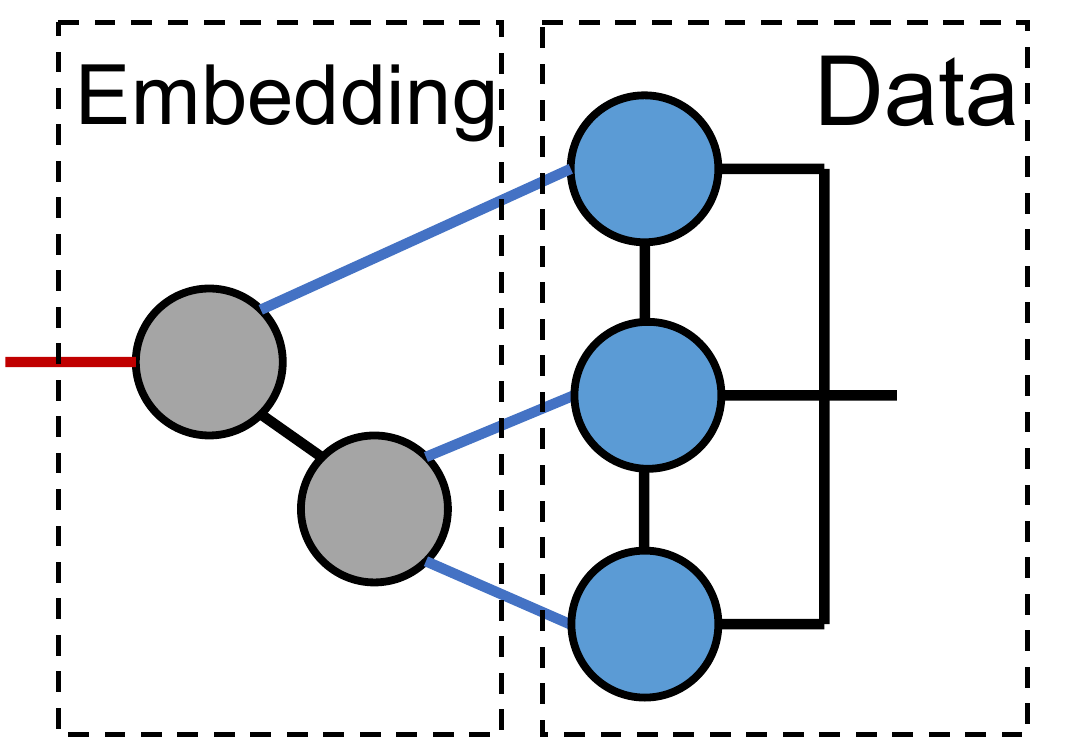}
	\vspace{-3mm}
  \caption{Illustration of target data and embedding. Blue edges have larger weights than the red edge.
  }
  \label{fig:example_intro}
	\vspace{-5mm}
  \end{wrapfigure}
In this work, we
design algorithms to efficiently sketch more general data tensor networks such that each dimension to be sketched has size lower bounded by the sketch size and is a dimension of only one tensor. One of such data tensor networks is shown in \cref{fig:example_intro}.
In particular, we look at the following question.

    \textit{For arbitrary data with a tensor network structure of interest, can we automatically sketch the data into one tensor with
    Gaussian tensor network embeddings
    that are accurate, have low sketch size, and also minimize the sketching asymptotic computational cost?}

Different from existing works~\cite{ahle2020oblivious,jin2019faster,ma2021fast} that construct the embedding based on fast sketching techniques, including Countsketch~\cite{charikar2002finding}, Tensorsketch~\cite{pagh2013compressed}, and fast Johnson-Lindenstraus (JL) transform using fast Fourier transform~\cite{ailon2006approximate}, we discuss the case where each tensor in the embedding contains i.i.d. Gaussian random elements. Gaussian-based embeddings yield larger computational cost, but have the most efficient sketch size for both unconstrained and constrained optimization problems~\cite{chen2020nearly,pilanci2015randomized}.
This choice also enables us use a simple computational model to analyze the sketching cost, where tensor contractions are performed with classical dense matrix multiplication algorithms.

While we allow for the data tensor network to be a hypergraph, we consider only graph embeddings,
(detailed definition in \cref{sec:def}), which include tree embeddings that have been previously studied~\cite{ahle2020oblivious,daas2021randomized,batselier2018computing}.
Each one of these embeddings consisting of $\Ne$ tensors can be reduced to a sequence of $\Ne$ sketches (random sketching matrices).
In \cref{subsec:summery_sketchsize}, we show that if each of these sketches is
$(\epsilon/\sqrt{\Ne},\delta)$-accurate, then the embedding is at least ($\epsilon, \delta$)-accurate.

In \cref{subsec:summary_efficient_ebmedding},
we provide an algorithm to sketch input data with an embedding that not only satisfies the $(\epsilon,\delta)$-accurate sufficient condition, but is computationally efficient and has low sketch size. 
Given a data tensor network and one data contraction tree $T_0$, this algorithm outputs a sketching contraction tree that is constrained on
$T_0$.
This setting is useful for application of sketching to alternating optimization in tensor-related problems, such as tensor decompositions. In alternating optimization, multiple contraction trees of the data $x$ are chosen in an alternating order to form multiple optimization subproblems, each updating part of the variables~\cite{phan2013fast,ma2021efficient,ma2020autohoot}.
Designing embeddings under the constraint can help reuse
contracted intermediates across subproblems.

The sketch size of the embedding used in the algorithm has a linear dependence on
the number of sketching dimensions of the input. As to the sketching asymptotic computational cost,
within all constrained sketching contraction trees with embeddings satisfying the $(\epsilon,\delta)$-accurate sufficient condition and only have one output sketch dimension, this algorithm
achieves asymptotic cost within a factor of $O(\sqrt{m})$ of the lower bound, where $m$ is the sketch size.
When the input data tensor network structure is a graph, the factor improves to $O(m^{0.375})$.
In addition, when
 each tensor in the input data has a dimension to be sketched, such as Kronecker product input and tensor train input,
this algorithm yields the optimal sketching asymptotic cost.

At the end of \cref{subsec:summary_efficient_ebmedding}, we look at cases where the widely discussed tree tensor network embeddings are efficient in terms of the sketching computational cost.
We show for input data graphs such that each data tensor has a dimension to be sketched and each contraction in the given data contraction tree $T_0$ contracts dimensions with size being at least the sketch size, sketching with tree embeddings can achieve the optimal asymptotic cost.

In \cref{sec:app}, we apply our sketching algorithm to two applications, CANDECOMP/PARAFAC (CP) tensor decomposition~\cite{hitchcock1927expression,harshman1970foundations} and tensor train rounding~\cite{oseledets2011tensor}. 
We present a new sketching-based alternating least squares (ALS) algorithm for CP decomposition.
Compared to existing sketching-based ALS algorithm, this algorithm yields better asymptotic computational cost under several regimes, such as when the CP rank is much lower than each dimension size of the input tensor.
We also provide analysis on the recently introduced randomized tensor train rounding algorithm~\cite{daas2021randomized}. We show that the tensor train embedding used in that algorithm satisfies the accuracy sufficient condition in \cref{subsec:summery_sketchsize} and yields the optimal sketching asymptotic cost, implying that this is an efficient algorithm, and embeddings with other structures cannot achieve lower asymptotic cost.

\section{Definitions}\label{sec:def}
  \begin{wrapfigure}{r}{0.35\textwidth}
\vspace{-2mm}
\centering
\includegraphics[width=.35\textwidth, keepaspectratio]{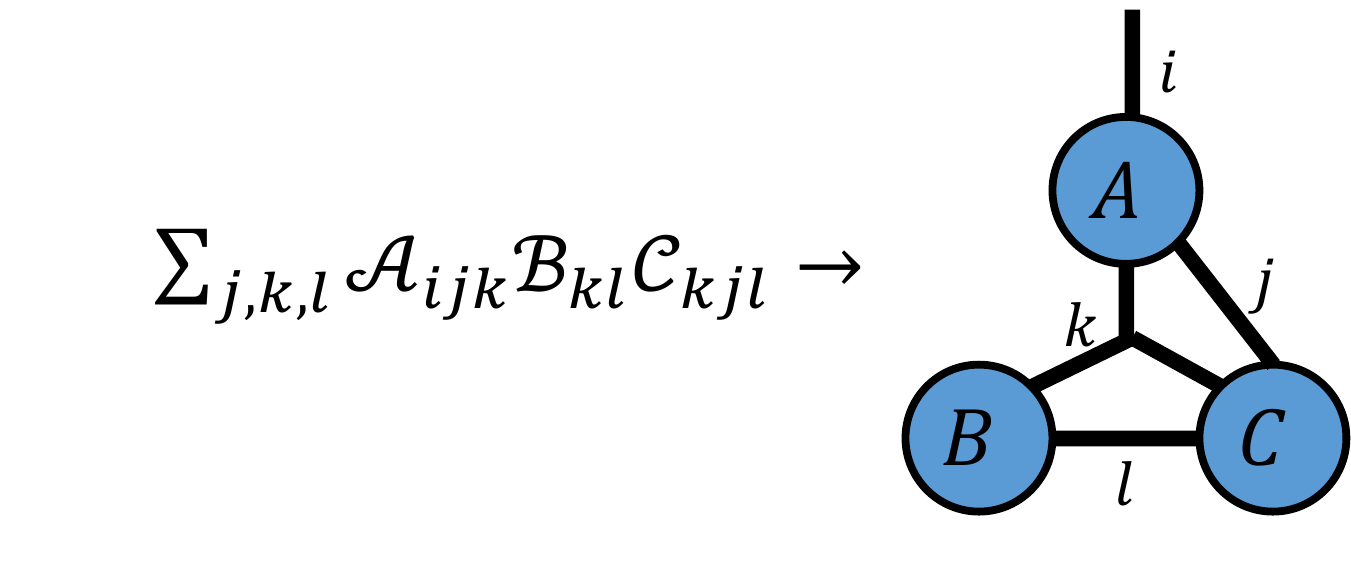}
	\vspace{-6mm}
  \caption{An example of tensor diagram notation.
  }
  \label{fig:tensordiagram}
	\vspace{-3mm}
  \end{wrapfigure}

We introduce some tensor network notation here, and provide additional definitions and background in \cref{sec:background}.
The structure of a tensor network can be described by an undirected hypergraph $G=(V,E,w)$, also called tensor diagram.
Each hyperedge $e\in E$ may be adjacent to either one or at least two vertices,
and we refer to hyperedges with a dangling end (one end not adjacent to any vertex) as uncontracted hyperedges, and those without dangling end as contracted hyperedges. We refer to the cardinality of a hyperedge as its number of ends.
An example is shown in \cref{fig:tensordiagram}.
We use $w$ to denote a function such that for each $e\in E$, $w(e) = \log(s)$ is the natural logarithm of the dimension sizes represented by hyperedge $e$. For a hyperedge set $E$, we use $w(E)=\sum_{e\in E}w(e)$ to denote the weighted sum of the hyperedge set.

A tensor network embedding is the matricization of a tensor described by a  tensor network, and each embedding can be described by $S=(\Ge,\bar{E})$, where $\Ge =(\Ve,\Ee,w)$ shows the embedding graph structure and 
$\bar{E}\subseteq \Ee$ is the edge set connecting data and the embedding.
In this work we only discuss the case where $\Ge$ is a graph, such that each uncontracted edge in $\Ee$ is adjacent to one vertex and contracted edge in $\Ee$ is adjacent to two vertices.
Let $E_1\subset \Ee$ be the subset of uncontracted edges, $S$ is a matricization such that uncontracted dimensions in $\bar{E}$ are grouped into the column of the matrix, and dimensions in $E_1$ are grouped into the row.
We use $N=|\bar{E}|$ to denote the order of the embedding, and $m=\exp(w(E_1))$ denotes the output sketch size. 
We use $\Gd= (\Vd,\Ed,w)$ to represent the data tensor network structure, and use $G=(\Gd,\Ge)$ to denote the overall tensor network structure.

Within the tensor network $G=(V,E,w)$, the contraction between two tensors represented by $v_i,v_j\in V$ is denoted by $(v_i,v_j)$. The contraction between two tensors that are the contraction outputs of $W_i\subset V$, $W_j\subset V$, respectively, is denoted by $(W_i,W_j)$.
A contraction tree on the tensor network $G=(V,E,w)$ is a rooted binary tree $T_B =(V_B, E_B)$ showing how the tensor network is fully contracted. Each vertex in $V_B$ can be represented by a subset of the vertices, $W\subseteq V$, and denotes
the contraction output of $W$. The two children of $W$, denoted as $W_1$ and $W_2$, must satisfy $W_1\cup W_2 = W$. Each leaf vertex must have $|W|=1$, and the root vertex is represented by $V$. Any topological sort of the contraction tree represents a contraction path (order) of the tensor network.

\section{Sufficient condition for accurate embedding}\label{subsec:summery_sketchsize}

  \begin{wrapfigure}{r}{0.6\textwidth}
\vspace{-2mm}
\centering
\includegraphics[width=.6\textwidth, keepaspectratio]{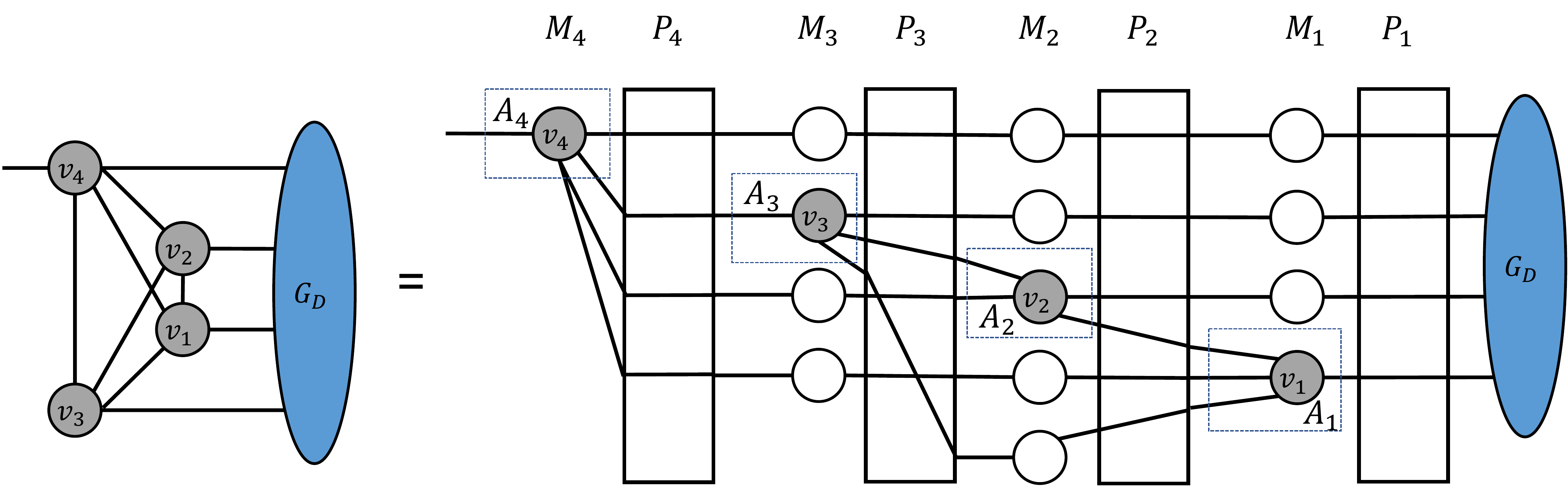}
	\vspace{-3mm}
  \caption{Illustration of embedding linearization. 
  Each gray vertex denotes a tensor of the embedding, each white vertex denotes an identity matrix, and each white box denotes a permutation matrix. 
  }
  \label{fig:embedding_examples}
	\vspace{-3mm}
  \end{wrapfigure}

We consider tensor network embeddings that have a graph structure, thus some embeddings, such as those with a Khatri-Rao product structure~\cite{rakhshan2020tensorized,chen2020nearly}, are not considered in this work. Such embeddings can be linearized to a sequence of sketches. 
Let $\Ne=|\Ve|$ denote the number of vertices in the embedding,
in each linearization, each vertex is given an unique index $i\in[\Ne]$\footnote{Throughout the paper we use $[N]$ to denote $\{1,\ldots, N\}$.} and denoted $v_i$. 
The $i$th tensor is denoted by $\tsr{A}_i$, and $\mat{A}_{i}$ denotes its matricization where we combine all uncontracted dimensions and contracted dimensions connected to $\tsr{A}_j$ with $j > i$ into the row, and other dimensions into the column. 
The embedding can then be represented as a chain of multiplications,
$
    \mat{S} = \mat{M}_{\Ne}\mat{P}_{\Ne} \cdots \mat{M}_1\mat{P}_1 ,
$
where  $\mat{M}_{i}$ is the Kronecker product of identity matrices with $\mat{A}_{i}$ for $i\in[\Ne]$, and $\mat{P}_i$ is a permutation matrix. 
We illustrate the linearization 
in \cref{fig:embedding_examples} using  a fully connected tensor network embedding.
We show in~\cref{thm:subgaussian_embedding} a sufficient condition for embeddings to be $(\epsilon,\delta)$-accurate.

\begin{theorem}[($\epsilon, \delta$)-accurate sufficient condition] \label{thm:subgaussian_embedding}
Consider a Gaussian tensor network embedding where
there exists a linearization such that each $\mat{A}_{i}$ for $i\in [\Ne]$ has row size $\Omega(\Ne\log(1/\delta)/\epsilon^2)$. Then the tensor network embedding is ($\epsilon, \delta$)-accurate.
\end{theorem}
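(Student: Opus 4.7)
The plan is to reduce the norm-preservation question to a chain of Gaussian matrix--vector products and then apply a martingale-style concentration argument that respects the iteration structure. Setting $\vcr{y}_0 = \vcr{x}$ and $\vcr{y}_i = \mat{M}_i \mat{P}_i \vcr{y}_{i-1}$, we have $\mat{S}\vcr{x} = \vcr{y}_{\Ne}$, and after absorbing the Kronecker permutation into $\mat{P}_i$ each update can be rewritten as $\vcr{y}_i = \vecn(\mat{A}_i \mat{Y}_{i-1})$, where $\mat{Y}_{i-1}$ is a matrix reshape of $\mat{P}_i\vcr{y}_{i-1}$ satisfying $\|\mat{Y}_{i-1}\|_F = \|\vcr{y}_{i-1}\|$ and $\mat{A}_i$ is Gaussian of row size $m_i \geq c\,\Ne\log(1/\delta)/\epsilon^2$.

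For each single step, conditional on the filtration $\mathcal{F}_{i-1}=\sigma(\mat{A}_1,\ldots,\mat{A}_{i-1})$, $\|\vcr{y}_i\|^2 = \|\mat{A}_i \mat{Y}_{i-1}\|_F^2$ is a Gaussian quadratic form with mean $\|\vcr{y}_{i-1}\|^2$. Diagonalizing $\mat{Y}_{i-1}$ by SVD and using the rotational invariance of the Gaussian distribution, the ratio $\|\vcr{y}_i\|^2/\|\vcr{y}_{i-1}\|^2$ distributes as a convex combination of independent $\chi^2_{m_i}/m_i$ variables; the Laurent--Massart (or Hanson--Wright) inequality then gives sub-exponential concentration of scale $O(1/\sqrt{m_i})$ at step $i$.

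The main step is to chain these $\Ne$ per-step bounds without losing a $\sqrt{\Ne}$ factor in the final error, which is what a naive union bound with per-step slack $\epsilon/\sqrt{\Ne}$ would produce (the slacks compound multiplicatively to $(1+\epsilon/\sqrt{\Ne})^{\Ne}=\Theta(e^{\epsilon\sqrt{\Ne}})$). Instead, I would exploit that $Z_i := \|\vcr{y}_i\|^2$ is a nonnegative martingale and bound the moments $\E[(Z_{\Ne}-\|\vcr{x}\|^2)^{2k}]$ iteratively: step $i$ contributes conditional variance $O(Z_{i-1}^2/m_i)$ and sub-exponential scale $O(Z_{i-1}/\sqrt{m_i})$, so a Bernstein-type inequality for martingale differences with sub-exponential increments yields deviation of order $(\sqrt{\Ne t/m}+\Ne t/m)\|\vcr{x}\|^2$ at failure probability $e^{-t}$, where $m := \min_i m_i$. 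Choosing $t = \Theta(\log(1/\delta))$ together with $m \geq c\,\Ne\log(1/\delta)/\epsilon^2$ gives the claimed $(\epsilon,\delta)$-accuracy.

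The hard part is carrying out this martingale chaining so as to achieve the Gaussian-rate $\sqrt{\Ne/m}$ dependence rather than the weaker $\Ne/m$ rate implied by triangle inequality on per-step errors. This hinges on the per-step quadratic-form increments being mean-zero with variance $O(Z_{i-1}^2/m)$, so their aggregate behaves like a sum of $\Ne$ independent mean-zero perturbations with Gaussian tails throughout the regime $t \ll m$ that is relevant here; verifying this cancellation carefully (and tracking that the $Z_{i-1}$ factors remain bounded throughout the induction) is the technical heart of the argument.
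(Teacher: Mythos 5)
Your proposal is correct in its overall architecture and reaches the same reduction as the paper — linearize the embedding into a chain $\mat{S}=\mat{M}_{\Ne}\mat{P}_{\Ne}\cdots\mat{M}_1\mat{P}_1$ and control the product — but it then diverges: the paper simply invokes the JL-moment composition machinery of Ahle et al.\ (\cref{lem:subgaussian_strongjl}, \cref{lem:kronecker_jl}, \cref{lem:composition_strongjl}), i.e.\ each $\mat{A}_i$ with row size $\Omega(\Ne\log(1/\delta)/\epsilon^2)$ has the strong $\bigl(\epsilon/(L\sqrt{2\Ne}),\delta\bigr)$-JL moment property, the Kronecker lemma lifts this to $\mat{M}_i\mat{P}_i$, and the product lemma composes the $\Ne$ factors without the $\sqrt{\Ne}$ loss, after which Markov's inequality gives $(\epsilon,\delta)$-accuracy. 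What you propose is essentially a from-scratch re-derivation of that composition lemma: your observation that $\|\vcr{y}_i\|^2/\|\vcr{y}_{i-1}\|^2$ is a convex combination of independent $\chi^2_{m_i}/m_i$ variables is correct (and is exactly why \cref{lem:kronecker_jl} holds — the Kronecker-with-identity structure only reshapes the Frobenius norm), and your diagnosis that naive per-step union bounds lose a factor and that one must instead exploit the mean-zero martingale structure of the increments $Z_i-Z_{i-1}$ is precisely the content of \cref{lem:composition_strongjl}, whose proof in the cited work is a moment-recursion of the Burkholder/Rosenthal type you describe. The trade-off is clear: your route is self-contained and makes the probabilistic mechanism transparent, but the step you flag as "the technical heart" — a Bernstein-type inequality for martingale differences whose conditional variance proxy $O(Z_{i-1}^2/m)$ is itself random, requiring either a stopped-martingale device or an induction on $p$-th moments to keep $Z_{i-1}$ controlled — is nontrivial and is left unexecuted; it is exactly the work the cited composition lemma packages up. As written your argument is a sound plan rather than a complete proof, and completing it would amount to reproving \cref{lem:composition_strongjl}; citing that lemma (as the paper does) closes the gap immediately.
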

\vspace{-3mm}
\begin{proof}
Based on the composition rules of JL moment~\cite{kane2011almost,kane2014sparser} in \cref{lem:kronecker_jl} and \cref{lem:composition_strongjl} in the appendix, in the linearization
all $\mat{M}_i\mat{P}_i$ satisfy the strong $\left(\frac{\epsilon}{L\sqrt{2N}}, \delta\right)$-JL moment property so  $\mat{S}$ satisfies the strong $\left({\epsilon}, \delta\right)$-JL moment property. This implies the embedding is $(\epsilon, \delta$)-accurate.
\end{proof}

\cref{thm:subgaussian_embedding} is a sufficient (but not necessary) condition for constructing ($\epsilon, \delta$)-accurate embedding. 
It also implies that specific tree embeddings are $(\epsilon,\delta)$-accurate, as we show below.

\begin{corollary}\label{cor:tree}
Consider a Gaussian embedding containing a tree tensor network structure, 
where
there is only one output sketch dimension with size $m=\bigTheta{\Ne\log(1/\delta)/\epsilon^2}$, and each dimension within the embedding has size $m$. Then the embedding is $(\epsilon,\delta)$-accurate.
\end{corollary}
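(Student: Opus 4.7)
The plan is to deduce this corollary as an immediate application of Theorem~\ref{thm:subgaussian_embedding}, by exhibiting a specific linearization of the tree embedding under which every matricization $\mat{A}_i$ has row size at least $m = \bigTheta{\Ne \log(1/\delta)/\epsilon^2}$. The construction is canonical for trees: first I would root the tree at the unique vertex $v^\star \in \Ve$ incident to the single output sketch edge (well-defined by hypothesis), and then index the vertices $v_1, \dots, v_{\Ne}$ by any post-order traversal of this rooted tree, so that each non-root vertex receives an index strictly smaller than that of its parent (and all of its descendants receive strictly smaller indices than its own).

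Under this ordering, every non-root vertex $v_i$ has exactly one neighbor appearing later in the sequence, namely its parent $v_p$ with $p>i$. By the matricization convention used in the linearization, the edge between $v_i$ and $v_p$ is grouped into the row of $\mat{A}_i$; since all edges internal to the embedding have size $m$, this already guarantees that the row of $\mat{A}_i$ has size at least $m$. For the root vertex $v_{\Ne} = v^\star$ there is no later neighbor, but by hypothesis its uncontracted output sketch dimension (of size $m$) is present and, being uncontracted, is likewise placed into the row of $\mat{A}_{\Ne}$. Hence every $\mat{A}_i$ has row size $\Omega(\Ne \log(1/\delta)/\epsilon^2)$, and Theorem~\ref{thm:subgaussian_embedding} gives the claimed $(\epsilon,\delta)$-accuracy.

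The only real subtlety, and the one place where the tree assumption is used crucially, is the guarantee that \emph{every} non-root vertex has at least one edge directed toward a later index: this is precisely the property that each non-root vertex of a rooted tree has a (unique) parent, which fails for general graphs where one could imagine a vertex whose neighbors have all already been visited. Because the argument needs only a single such edge per non-root vertex to furnish the row-size bound, no further bookkeeping about branching factors, uncontracted data-connecting dimensions in $\bar{E}$, or placement of column dimensions is required; these all sit on the column side of $\mat{A}_i$ and play no role in the sufficient condition of Theorem~\ref{thm:subgaussian_embedding}.
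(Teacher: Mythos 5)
Your proposal is correct and matches the paper's argument: the paper likewise orders the vertices by the reverse of a breadth-first search from the vertex carrying the output sketch dimension, which (like your post-order traversal) ensures every non-root vertex precedes its parent so that the parent edge of size $m$ lands in the row of $\mat{A}_i$, with the uncontracted output dimension handling the root. Your writeup simply makes explicit the details the paper leaves implicit.
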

\vspace{-3mm}
\begin{proof}
Consider the linearization such that vertices are labelled based on the reversed ordering of a breath-first search from the vertex adjacent to the edge associated with the output sketch dimension. Each $\mat{A}_i$ has row size $m = \bigTheta{\Ne\log(1/\delta)/\epsilon^2}$ thus the embedding satisfies \cref{thm:subgaussian_embedding}.
\end{proof}

One special case of \cref{cor:tree} is the 
tensor train~\cite{oseledets2011tensor} (also called
matrix product states (MPS)~\cite{schollwock2011density}) embedding, where the embedding tensor network has a 1D structure along with an output dimension adjacent to one of the endpoint tensors. Tensor train is widely used to efficiently represent high dimensional tensors in multiple applications, including numerical PDEs~\cite{dolgov2012fast,richter2021solving}, quantum physics~\cite{schollwock2005density}, high-dimensional data analysis~\cite{klus2018tensor,klus2015numerical} and machine learning~\cite{beylkin2009multivariate,stoudenmire2016supervised,obukhov2021spectral}.
Since the tensor train embedding contains $N$ vertices,
\cref{cor:tree} directly implies that a sketch size of $m=\bigTheta{N\log(1/\delta)/\epsilon^2}$ is sufficient for the MPS embedding to be  $(\epsilon,\delta)$-accurate. This embedding has already been used in applications including tensor train rounding~\cite{daas2021randomized} and low rank approximation of matrix product operators~\cite{batselier2018computing}.

Note that the tensor train embedding introduced in this work and \cite{daas2021randomized} adds an output sketch dimension to the standard tensor train, and restricts the tensor train rank to be the sketch size $m$.
This is different from the recent work by Rakhshan and Rabusseau~\cite{rakhshan2020tensorized}, where they construct an embedding consisting of $m$ independent tensor trains, each one with a tensor train rank of $R$. A sketch size upper bound of $m=\bigTheta{1/\epsilon^2\cdot(1+2/R)^N\log^{2N}(1/\delta)}$ is derived for that embedding to be $(\epsilon,\delta)$-accurate. However, this bound has an exponential dependence on $N$.

\section{A sketching algorithm with efficient computational cost and sketch size}
\label{subsec:summary_efficient_ebmedding}

We find Gaussian tensor network embeddings $\Ge$ that both have efficient sketch size and yield efficient computational cost.
We are given a specific data tensor network $\Gd$ that implicitly represents a matrix $\mat{M}\in\R^{s_1s_2\ldots s_N \times t}$, and want to sketch the row dimension of the matrix. We assume that size of each dimension to be sketched, $s_i$ for $i\in[N]$, is greater than the sketch size $m$, and each one of these dimensions is adjacent to only one tensor.
The goal is to find a Gaussian embedding $\Ge$ satisfying the following properties.

  \begin{wrapfigure}{r}{0.3\textwidth}
\vspace{-2mm}
\centering
\includegraphics[width=.3\textwidth, keepaspectratio]{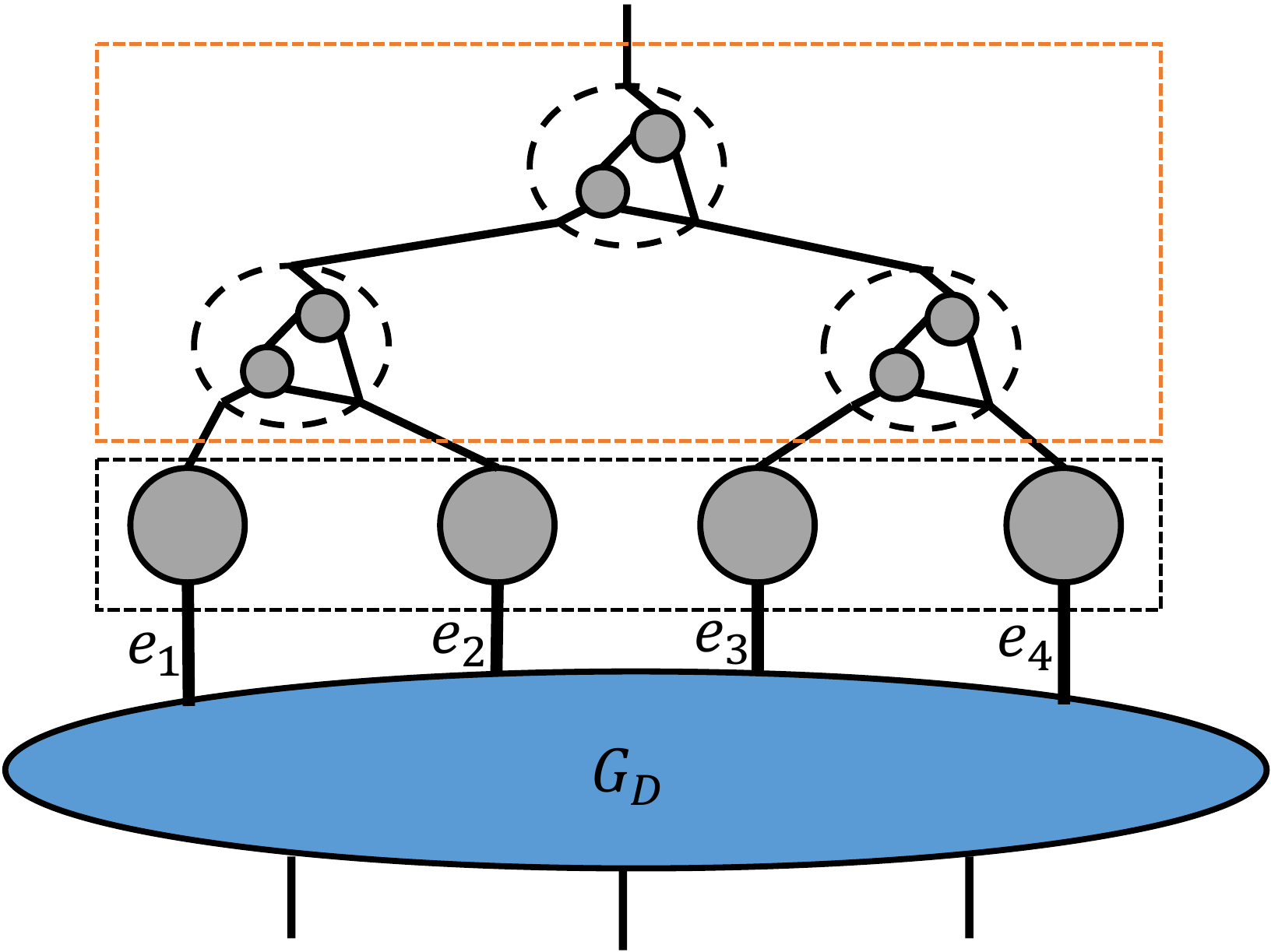}
\vspace{-4mm}
  \caption{Illustration of the embedding. The black box includes the Kronecker product embedding and the orange box includes the embedding containing a binary tree of small tensor networks.
  }
  \label{fig:ovw_embedding}
	\vspace{-5mm}
  \end{wrapfigure}

\begin{itemize}[leftmargin=*, itemsep=0pt]
    \item $\Ge\in \mathcal{G}^{(\epsilon,\delta)}$, where $\mathcal{G}^{(\epsilon,\delta)}$ contains all embeddings not only satisfying the ($\epsilon, \delta$)-accurate sufficient condition in \cref{thm:subgaussian_embedding}, but also only have one output sketch dimension ($|E_1|=1$) with size $m=\bigTheta{\Ne\log(1/\delta)/\epsilon^2}$.
    This guarantees that the embedding is accurate and the output sketch size is linear w.r.t. the number of vertices in $\Ge$. 
    Note that although the data can be a hypergraph, the embeddings considered in $\mathcal{G}^{(\epsilon,\delta)}$ are defined on graphs. 
    \item To fully contract the tensor network system $({\Gd, \Ge})$, this embedding yields a contraction tree with the \textit{optimal asymptotic contraction cost} under a fixed data contraction tree. 
    The data contraction tree constraint is useful for application of sketching to alternating optimization algorithms, as we will discuss in \cref{sec:app}.
    This can be written as an optimization problem below,
    \begin{align}\label{eq:cost_opt_unconstrained}
            \min_{\Ge} \min_{T_B} C_a(T_B({\Gd, \Ge})), 
           \quad \text{s.t. } 
        \Ge\in \mathcal{G}^{(\epsilon,\delta)}, 
            \quad
            T_0({\Gd})\subset T_B({\Gd, \Ge}),
    \end{align}
    where $T_B(\Gd,\Ge)$ denotes a contraction tree of the tensor network $(\Gd,\Ge)$,  $C_a$ denotes the asymptotic computational cost, and $T_0({\Gd})\subset T_B({\Gd, \Ge})$ means the contraction tree $T_B$ is constrained on $T_0$ (detailed definition in \cref{def:contrained_contraction_tree}). 
\end{itemize}

\begin{definition}[Constrained contraction tree]\label{def:contrained_contraction_tree}
Given $G=(\Ge,\Gd)$ and a contraction tree $T_0$ of $\Gd$, the contraction tree $T_B$ for $G$ is constrained on $T_0$ if for each contraction $(A,B)\in T_0$, there must exist one contraction $(\hat{A},\hat{B})\in T_B$, such that $\hat{A}\cap \Vd = A$ and $\hat{B}\cap \Vd = B$. 
\end{definition}

\paragraph{Algorithm}

We propose an algorithm to sketch tensor network data with an embedding containing two parts, a Kronecker product embedding and an embedding containing a binary tree of small tensor networks. The embedding is illustrated in \cref{fig:ovw_embedding}. 
The Kronecker product embedding consists of $N$ Gaussian random matrices and 
is used to reduce the weight of each edge in $\bar{E}$, the set of edges to be sketched.
The binary tree structured embedding consists of $N-1$ \textit{small tensor networks}, each represented by one binary tree vertex. 
Each small tensor network is used to effectively sketch the contraction of pairs of tensors adjacent to edges in $\bar{E}$.
The embedding with a binary tree structure may not be a binary tree tensor network, since each tree vertex is not restricted to represent one tensor.
The binary tree is chosen to be consistent with the dimension tree of the data contraction tree $T_0$, which is a directed binary tree showing the way edges
 in $\bar{E}$ are merged onto the same tensor in $T_0$. The detailed definition of dimension tree is in \cref{subsec:graph_notation}.

We first introduce some notation before presenting the algorithm. Consider a given input data tensor network $\Gd = (\Vd, \Ed, w)$ and its given data contraction tree, $T_0$. 
Below we let $\bar{E} = \{e_1,e_2,\ldots, e_N\}$ to denote the edges to be sketched. Let $\Nd = |\Vd|$.
Based on the definition we have $N \leq \Nd$ and $T_0$ contains $\Nd - 1$ contractions. Let one contraction path of $T_0$, which is a topological sort of the contractions in $T_0$, be expressed as
\begin{equation}\label{eq:exp_T0}
    \left\{(U_1,V_1), \ldots, (U_{\Nd-1}, V_{\Nd-1})\right\}.
\end{equation}
The $\Nd-1$ contractions can be categorized into $N+2$ sets, $\st{D}(e_1),\ldots, \st{D}(e_N), \st{S}, \st{I}$, as follows,
\begin{itemize}[leftmargin=*, itemsep=0pt]
\item Consider contractions $(U_i,V_i)$ such that both $U_i$ and $V_i$ are adjacent to edges in $\bar{E}$. $\st{S}$ contains all contractions with this property.
\item Consider contractions $(U_i, V_i)$ such that the only edge in $\bar{E}$ that is adjacent to the contraction output is $e_j$, $\bar{E}(U_i\cup V_i) = \{e_j\}$. 
We let  $\st{D}(e_j)$ contains contractions with this property.
When $\stD{j}$ is not empty,
we let $X(e_j)\subset V$ represent the sub network contracted by $\stD{j}$. When $\stD{j}$ is empty, we let $X(e_j)=v_j$, where $v_j$ is the vertex in the data graph adjacent to $e_j$. 
\item 
The remaining contractions in the contraction tree include $(U_i,V_i)$ such that both $U_i$ and $V_i$ are not adjacent to $\bar{E}$, and contractions where $U_i$ or $V_i$ is adjacent to at least two edges in $\bar{E}$, and the other one is not adjacent to any edge in $\bar{E}$. 
We let $\st{I}$ contain these contractions.
\end{itemize}

  \begin{wrapfigure}{r}{0.51\textwidth}
	\vspace{-5mm}
    \begin{minipage}{0.51\textwidth}
\begin{algorithm}[H]
\caption{Sketching algorithm}
\begin{algorithmic}[1]\label{alg:contract_sketch}
\STATE{\textbf{Input: } Input data tensor network $\Gd$, data contraction tree $T_0$ expressed in \eqref{eq:exp_T0}}
\FOR{each $e_i\in \bar{E}$}
\STATE{\Comment{Sketch with Kronecker product embedding}
\label{line:sketch_kronecker}
}
\STATE{$W\leftarrow \text{contract and sketch }X(e_j)$}
  \STATE{Replace the contraction output of $X(e_j)$ by $W$ in $T_0$}
\ENDFOR
\FOR{each contraction $(U_i, V_i)$ in $\st{S}\cup \st{I}$}{
   \IF{ $i\in \st{S}$}
\STATE{\Comment{Sketch with binary tree embedding}
\label{line:sketch_binary}
}
   \STATE{$W_i\leftarrow \text{contract and sketch }(U_i, V_i)$ (detailed in \cref{subsec:binary}) \label{line:sketch_s}}
   \ELSE
   \STATE{$W_i\leftarrow \text{contract}(U_i,V_i)$}
   \ENDIF
   \STATE{Replace the contraction output of $(U_i,V_i)$ by $W_i$ in $T_0$}
}
\ENDFOR 
\RETURN $W_{\Nd-1}$
\end{algorithmic}
\end{algorithm}
    \end{minipage}
	\vspace{-5mm}
  \end{wrapfigure}

The sketching algorithm is shown in \cref{alg:contract_sketch}, and the details  are as follows,
\begin{itemize}[leftmargin=*, itemsep=0pt]
    \item One matrix in the Kronecker product embedding  is used to sketch the sub data network $\stX{j}$, which guarantees that two sketch dimensions to be merged onto one tensor will both have size $\Theta(N\log(1/\delta)/\epsilon^2)$.
    For the case where $\stD{j} = \emptyset$, we directly sketch $X(e_j) = v_j$ using an embedding matrix. For the case where $\stD{j} \neq \emptyset$, we 
    select $k(e_j)\in \stD{j}$ and apply the sketching matrix during the contraction $(U_{k(e_j)}, V_{k(e_j)})$. The value of $k(e_j)$ is selected via an exhaustive search over all $|\stD{j}|$ contractions, so that sketching $\stX{j}$ has the lowest asymptotic cost. 
    \item One small tensor network (denoted as $Z_i$) represented by a binary tree vertex in the binary tree structured embedding  is used to sketch the contraction $(U_i,V_i)$ when $i\in \st{S}$, which means that both $U_i$ and $V_i$ are adjacent to $\bar{E}$.
    Let $\hat{U}_i,\hat{V}_i$ denote the sketched $U_i$ and $V_i$ formed in previous contractions in the sketching contraction tree $T_B$, such that $\hat{U}_i\cap \Vd = U_i$ and $\hat{V}_i\cap \Vd = V_i$,
    the structure of $Z_i$ is determined so that the asymptotic cost to sketch $(\hat{U}_i,\hat{V}_i)$ is minimized 
    under the constraint that $Z_i$ is in $\mathcal{G}^{(\epsilon/\sqrt{N},\delta)}$, so that it satisfies the $(\epsilon/\sqrt{N},\delta)$-accurate sufficient condition and only has one output dimension. 
In \cref{subsec:binary}, we provide an algorithm to construct $Z_i$ containing 2 tensors, so that the output sketch size of $Z_i$ is $\Theta(N\log(1/\delta)/\epsilon^2)$.
\end{itemize}

\paragraph{Analysis of the algorithm}

The embedding constructed during \cref{alg:contract_sketch} contains $\Theta(N)$ vertices, and the output sketch size is $m=\Theta(N\log(1/\delta)/\epsilon^2)$. Therefore, the  sketching result both has low sketch size and is $(\epsilon,\delta)$-accurate.
Below we discuss the optimality of \cref{alg:contract_sketch} in terms of the sketching asymptotic computational cost. 
We first discuss the case when each vertex in the data tensor network is adjacent to an edge in $\bar{E}$. 

\begin{theorem}\label{thm:optimal}
For data tensor networks where each vertex is adjacent to an edge in $\bar{E}$, the asymptotic cost of \cref{alg:contract_sketch} is optimal w.r.t. the optimization problem in \eqref{eq:cost_opt_unconstrained}.
\end{theorem}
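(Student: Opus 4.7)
The plan is to establish optimality by matching a per-contraction lower bound to the upper bound achieved by Algorithm~1. First, I would observe that under the hypothesis $\st{I}=\emptyset$: since every $v\in\Vd$ is adjacent to an edge in $\bar{E}$ and sketching edges are uncontracted (and so persist through any contraction until explicitly sketched), every partial contraction output in $T_0$ is adjacent to $\bar{E}$. This rules out both cases defining $\st{I}$, so the $\Nd-1$ contractions of $T_0$ are exhausted by $\st{S}\cup\bigcup_j \stD{j}$.

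Next, I would decompose the asymptotic cost of any valid pair $(\Ge, T_B)$ satisfying the constraints of \eqref{eq:cost_opt_unconstrained} as a sum over contractions in $T_B$, and group these into (i) data contractions that project onto $\stD{j}$ for some $j$ (together with the embedding-data contractions used to sketch $e_j$), and (ii) data contractions in $\st{S}$ (together with the embedding-data contractions used during that step). Because $T_B$ is constrained on $T_0$, every such data contraction appears in $T_B$, so this partition exhausts the cost. For group (i), the only choices are the placement $k(e_j)\in\stD{j}$ at which the embedding tensor for $e_j$ is applied and the shape of that tensor; Algorithm~1 performs an exhaustive search over $k(e_j)$, and it uses a single $m\times s_j$ Gaussian (a Kronecker factor), which is asymptotically the cheapest realization of any $(\epsilon/\sqrt{\Ne},\delta)$-accurate sketch on a single edge of size $s_j\geq m$, since introducing more embedding tensors only raises the per-edge sketching cost without benefit. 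Hence each group-(i) term in Algorithm~1's cost matches its lower bound term-by-term.

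For group (ii), each $i\in\st{S}$ contraction $(U_i,V_i)$ must be turned into a sketched contraction $(\hat U_i,\hat V_i)$ by some sub-network of $\Ge$; the local sub-problem is to choose a small embedding (what the paper calls $Z_i$) inside $\mathcal{G}^{(\epsilon/\sqrt{N},\delta)}$ with a single output dimension, contracted into $(\hat U_i,\hat V_i)$ at minimum cost. Algorithm~1 invokes the construction of Section~\ref{subsec:binary} to produce a two-tensor $Z_i$ that attains this minimum, so each group-(ii) term is also matched term-by-term. Summing both groups over $j\in[N]$ and $i\in\st{S}$ gives the claim. The main obstacle is the lower-bound direction for the decomposition: I would need to argue that no alternative strategy---such as deferring the sketch of some $e_j$ past its $\stD{j}$ contractions into an $\st{S}$ contraction, pushing sketching tensors across data contractions, or fusing the embedding tensors for several edges into a single large Gaussian---can strictly lower the asymptotic cost. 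The key ingredient is that any such deferral forces a later contraction to carry an unsketched $s_j$-sized mode (and thus costs at least as much), while any fusion violates the linearization constraint of $\mathcal{G}^{(\epsilon,\delta)}$ without saving work because the row size $\Omega(m)$ requirement per linearized factor already forces $\Theta(N)$ embedding tensors. With this lemma in hand, the term-by-term matching above becomes a genuine equality of the optima, proving \cref{thm:optimal}.
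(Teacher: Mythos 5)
Your high-level architecture is the same as the paper's: decompose the cost of Algorithm~1 per contraction of $T_0$ and match it against a per-contraction lower bound (the paper's \cref{thm:lowerbound_uniform}). One small point first: under the hypothesis, every contraction of $T_0$ lands in $\st{S}$ and in fact $\stD{j}=\emptyset$ for every $j$, since each $e_j$ is adjacent to exactly one vertex and every contraction output therefore touches at least two edges of $\bar E$; your group (i) degenerates to sketching the single vertices $v_j$, which is consistent with your argument but worth stating explicitly.

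The genuine gap is the one you yourself flag as ``the main obstacle'': the lower-bound direction for group (ii). You define the per-contraction optimum as the cheapest way to attach a local gadget $Z_i$ to $(\hat U_i,\hat V_i)$ and observe that Algorithm~1 attains it, but that only proves optimality within the class of embeddings that decompose into independent local gadgets applied at each $\st S$ contraction. The optimization \eqref{eq:cost_opt_unconstrained} ranges over all $\Ge\in\mathcal{G}^{(\epsilon,\delta)}$ and all $T_B$ constrained on $T_0$, so a competitor may share embedding tensors across contractions, interleave embedding and data contractions arbitrarily, and in particular only \emph{partially} reduce the sketch dimension of $\hat U_i$ before contracting with $\hat V_i$. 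Your qualitative claim that deferral ``costs at least as much'' does not yield the quantitative bound $\Omega(y_i)$ with $y_i$ as in \eqref{eq:yi}: the term $m^2 d_i\sqrt{a_i b_i c_i m}\cdot\min(\sqrt{a_i},\sqrt{c_i})$ arises precisely from optimizing a trade-off. Writing $\alpha=\exp\left(\cut_{\GA}(\hat V_i,\hat U_i)\right)$ for the unreduced dimension carried into the contraction, one pays $\Omega(a_i b_i d_i m^2\alpha)$ forming $\hat U_i$ and $\Omega(a_i c_i d_i m^3/\alpha)$ sketching the output down afterwards, and minimizing the sum over $\alpha$ produces the geometric-mean term; this is the content of the paper's \cref{lem:lowerbound_gen}, built on \cref{lem:w_cut,lem:cost_linearization,lem:sketch_one_intermediate_lower}. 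Without it your matching is an upper bound compared against an upper bound: for Kronecker-product data, a naive per-contraction bound gives only $\Omega(m^2)$ while the algorithm pays $\Theta(m^{2.5})$ per contraction, so the claimed term-by-term equality would not close. To complete the proof you must establish the $\Omega(y_i)$ bound for arbitrary admissible $(\Ge,T_B)$, not just for gadget-per-contraction embeddings.
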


We show the detailed proof of the theorem above in \cref{subsec:sketch_uniform}. Therefore, \cref{alg:contract_sketch} is efficient in sketching multiple widely used tensor network data, including tensor train, Kronecker product, and Khatri-Rao product. As we will discuss in \cref{sec:app}, \cref{alg:contract_sketch} can be used to design efficient sketching-based ALS algorithm for CP tensor decomposition.

Note that the embedding in \cref{alg:contract_sketch} may not be a tree embedding. As we will show in \cref{sec:exp}, for cases including sketching a Kronecker product data,  \cref{alg:contract_sketch} is more efficient than sketching with tree embeddings. On the other hand, for some data tensor networks, sketching with a tree embedding also yields the optimal asymptotic cost, which we will show in \cref{thm:tree_optimal}. 

For general input data where each data vertex may not adjacent to an edge in $\bar{E}$, \cref{alg:contract_sketch} may not yield the optimal sketching asymptotic cost, but is within a factor of at most $O(\sqrt{m})$ from the cost lower bound. Below we show the theorem, and the detailed proof is in \cref{subsec:sketch_general}.

\begin{theorem}\label{thm:approx_factor}
For any data tensor network $\Gd$, the asymptotic cost of \cref{alg:contract_sketch} (denoted as $c$) satisfy
$
c = O\left(\sqrt{m}\cdot c_{\text{opt}}
\right)
$, where $c_{\text{opt}}$ is the optimal asymptotic computational cost for the optimization problem \eqref{eq:cost_opt_unconstrained} and $m = \Theta(N\log(1/\delta)/\epsilon^2)$. 
When $\Gd$ is a graph, $
c = O\left(m^{0.375}\cdot c_{\text{opt}}
\right)
$.
\end{theorem}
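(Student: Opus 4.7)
The plan is to bound the cost of \cref{alg:contract_sketch} by decomposing it along the partition $\st{D}(e_1), \ldots, \st{D}(e_N), \st{S}, \st{I}$ of the contraction tree $T_0$, then compare each contribution against a matching contribution in any competitor embedding $\Ge \in \mathcal{G}^{(\epsilon,\delta)}$ with any contraction tree constrained on $T_0$. Since every contraction in $T_0$ must appear (in sketched or unsketched form) in $T_B$, the optimum $c_{\mathrm{opt}}$ decomposes analogously, and the ratio reduces to a per-contraction inequality.

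First I would observe that, because the small networks $Z_i$ attached at each $(U_i,V_i)\in \st{S}$ are chosen to minimize the sketching cost of $(\hat{U}_i,\hat{V}_i)$ subject to lying in $\mathcal{G}^{(\epsilon/\sqrt{N},\delta)}$ with one output dimension, and because the Kronecker sketches on $\stX{j}$ are chosen by an exhaustive search over $k(e_j)\in \stD{j}$, the contributions from $\bigcup_j \st{D}(e_j) \cup \st{S}$ are already within an $O(1)$ factor of any alternative embedding that only sketches these contractions. Thus the entire gap must come from the unsketched contractions in $\st{I}$: these are exactly the contractions where neither side is adjacent to $\bar{E}$, or only one side is adjacent to $\bar{E}$ but multiple sketch edges have already merged onto it. For such contractions $(U_i,V_i)\in \st{I}$, \cref{alg:contract_sketch} incurs the full contraction cost of $(\hat{U}_i,\hat{V}_i)$, whereas an optimal embedding could in principle interleave a sketching tensor to reduce the active dimension.

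To quantify this, I would note that for a single binary contraction of tensors with a fixed set of free and shared modes, inserting a Gaussian sketching matrix of row size $m$ along a subset of modes can reduce the per-contraction cost by at most the factor by which the relevant mode product shrinks, and by accuracy (the embedding must still satisfy the sufficient condition of \cref{thm:subgaussian_embedding}) no single sketching mode can be reduced below $\Omega(m)$. A case analysis on how many of the three mode groups (free to $U_i$, free to $V_i$, shared between them) get replaced by sketched dimensions shows that the per-contraction cost ratio is bounded by $O(\sqrt{m})$ in general, arising from the Loomis--Whitney type inequality in which a hypergraph contraction cost is at most the product of the three mode-group sizes and the optimal sketched version reduces two of them to $\Theta(m)$, so the ratio is at most $(\text{large dim})^{1/2} \le \sqrt{m}$ after bounding the largest unsketched dimension by $m$ (using the hypothesis that each sketch edge has size $\ge m$, so bookkeeping on surviving intermediates gives a matching bound). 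When $\Gd$ is a graph rather than a hypergraph, each contracted intermediate has at most two adjacent tensors, which restricts the shape of $(\hat U_i, \hat V_i)$ to the classical matrix-multiplication form with three mode groups of individually bounded size; optimizing the exponents in the same Loomis--Whitney bound then yields the improved exponent $3/8$, giving the $O(m^{0.375})$ ratio.

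I would conclude by summing over all contractions: the sketched portions contribute at most a constant factor to the ratio, and the $\st{I}$ portions contribute the stated $O(\sqrt{m})$ (resp.\ $O(m^{3/8})$) factor, which bounds $c/c_{\mathrm{opt}}$ as claimed. The main obstacle I expect is the per-contraction case analysis in $\st{I}$: making precise which dimensions can be simultaneously sketched by a single competitor embedding while respecting the constraint $\Ge\in\mathcal{G}^{(\epsilon,\delta)}$ (in particular the single-output-dimension and $\Omega(\Ne\log(1/\delta)/\epsilon^2)$ row-size requirements) requires enumerating how an alternative embedding could route its edges through vertices of $\Gd$ not adjacent to $\bar E$, and proving that no such routing saves more than the stated factors. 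The tighter $m^{3/8}$ bound in the graph case will likely follow by observing that in a graph each data vertex has bounded degree contribution to the sketched mode product, which shrinks the effective exponent in the Loomis--Whitney bound from $1/2$ to $3/8$.
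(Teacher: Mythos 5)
Your decomposition along $\st{D}(e_1),\ldots,\st{D}(e_N),\st{S},\st{I}$ is the right starting point, but you have attributed the $O(\sqrt{m})$ loss to the wrong part of the partition, and this inverts the actual structure of the argument. In the paper, the terms coming from $\bigcup_j\st{D}(e_j)$ (the $\stt{j}$) and from $\st{I}$ (the $z_i$ of \eqref{eq:zi}) appear \emph{identically} in the algorithm's cost (\cref{thm:complexity}) and in the lower bound (\cref{thm:lowerbound_gen}); they contribute only an $O(1)$ factor to the ratio. The entire gap comes from the contractions in $\st{S}$: there the algorithm pays $y_i = a_ib_ic_id_im^2 + m^2d_i\sqrt{a_ib_ic_im}\cdot\min(\sqrt{a_i},\sqrt{c_i})$, while for general data the lower bound only guarantees $a_ib_ic_id_im^2$ per such contraction plus a global $Nm^{2.5}$ term (from \cref{lem:m2.5}, via comparison with a Kronecker-product data network). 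The ratio $\theta = \frac{m^2d_i\sqrt{a_ib_ic_im}\cdot\min(\sqrt{a_i},\sqrt{c_i})}{a_ib_ic_id_im^2+m^{2.5}}$ is then bounded by $\sqrt{m}$ by a case split on whether $\sqrt{b_ic_im}\le b_ic_i$, and when $\Gd$ is a graph one sets $d_i=1$ and balances $\min\bigl(\sqrt{m}/(b_ic_i)^{1/2},(b_ic_i)^{3/2}\bigr)$ to get $m^{3/8}$. Your claim that the $\st{S}$ contributions are ``already within an $O(1)$ factor of any alternative embedding'' because $Z_i$ is chosen optimally is not justified: $Z_i$ is optimal only within the algorithm's restricted family (a single small two-tensor network applied at that contraction on top of already-sketched intermediates), which does not imply matching the global optimum over all embeddings in $\mathcal{G}^{(\epsilon,\delta)}$ and all constrained contraction trees. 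Note that \cref{lem:lowerbound_gen} does show the per-contraction lower bound matches $y_i$ when every data vertex touches $\bar{E}$, which is exactly why \cref{thm:optimal} gives optimality in that case and why the approximation factor only arises for general data.

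A second, independent gap is that you never actually establish a lower bound on $c_{\text{opt}}$. Statements like ``inserting a Gaussian sketching matrix of row size $m$ can reduce the per-contraction cost by at most\ldots'' and the Loomis--Whitney appeal do not quantify over all embeddings satisfying \cref{thm:subgaussian_embedding} with one output dimension and all contraction trees constrained on $T_0$. The paper's proof leans on the linearization machinery (\cref{lem:cost_linearization}, \cref{lem:elu}, \cref{lem:sketch_one_intermediate_lower}, \cref{lem:sketch_Sj}) to prove that any such competitor must pay each of $\sum_j\stt{j}$, $\sum_{i\in\st{S}}a_ib_ic_id_im^2$, $Nm^{2.5}$, and $\sum_{i\in\st{I}}z_i$; without some substitute for that, the ratio $c/c_{\text{opt}}$ cannot be bounded. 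Finally, your proposed mechanism for the exponent $0.375$ (bounded degree in graphs shrinking a Loomis--Whitney exponent) is not the source of that constant; it arises purely from the elementary optimization over $b_ic_i$ described above once $d_i=1$.
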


\paragraph{Efficiency of tree tensor network embedding}
We discuss cases where tree tensor network embeddings can be optimal  w.r.t. the optimization problem in \eqref{eq:cost_opt_unconstrained}. Tree embeddings, in particular the tensor train embedding, have been widely discussed and used in prior work~\cite{rakhshan2020tensorized,daas2021randomized,batselier2018computing}. We design an algorithm to sketch with tree embeddings. 
The algorithm is similar to \cref{alg:contract_sketch}, and the only difference is that for each contraction $(U_i,V_i)$ with $i\in \st{S}$, 
such that both $U_i$ and $V_i$ are adjacent to edges in $\bar{E}$, we sketch it with one tensor rather than a small network. Below, we present the optimality of the algorithm in terms of sketching asymptotic cost.

\begin{theorem}\label{thm:tree_optimal}
Consider $\Gd$ with each vertex adjacent to an edge to be sketched and its given contraction tree  $T_0$. If each contraction in $T_0$ contracts dimensions with size being at least the sketch size, then sketching with tree embedding would yield the optimal asymptotic cost for \eqref{eq:cost_opt_unconstrained}.
\end{theorem}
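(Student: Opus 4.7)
The plan is to reduce the statement to \cref{thm:optimal}. Under the hypothesis that every vertex of $\Gd$ is adjacent to an edge in $\bar E$, \cref{thm:optimal} already guarantees that \cref{alg:contract_sketch} attains the optimal asymptotic cost for the problem in \eqref{eq:cost_opt_unconstrained}. The tree-embedding variant differs from \cref{alg:contract_sketch} only at \cref{line:sketch_binary}, where the two-tensor small network $Z_i$ is replaced by a single Gaussian tensor $C_i$ of shape $m\times m\times m$ (two sketch dimensions and one output dimension). It therefore suffices to show that under the extra hypothesis ``each contraction in $T_0$ contracts dimensions with total size at least $m$'', the asymptotic cost of sketching each pair $(\hat U_i, \hat V_i)$ with $i\in\st S$ is the same up to constants under either embedding.

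First I would fix one $(U_i, V_i)\in \st S$ together with the partially sketched tensors $\hat U_i, \hat V_i$ produced by earlier steps of $T_B$. Let $K_i$ be the product of sizes of the dimensions contracted between $\hat U_i$ and $\hat V_i$ in $T_0$; the hypothesis gives $K_i\ge m$. Let $D_U, D_V$ be the products of the remaining uncontracted dimension sizes of $\hat U_i, \hat V_i$ respectively. Both embeddings attach one sketch dimension of size $m$ to each of $\hat U_i$ and $\hat V_i$ and produce one output dimension of size $m$.

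Next I would compare contraction costs. With the two-tensor embedding used by \cref{alg:contract_sketch}, the optimal ordering over the four tensors $\hat U_i, \hat V_i$, and the two pieces of $Z_i$ is dominated by the term $\Theta(m^2 K_i D_U D_V)$ coming from contracting the size-$K_i$ shared dimensions and both size-$m$ sketch dimensions in the same step; the remaining orderings contribute only lower-order terms when $K_i\ge m$. With the single tensor $C_i$, consider the order that first merges $\hat U_i$ with $\hat V_i$ along all shared dimensions (cost $\Theta(m^2 K_i D_U D_V)$, producing a tensor with edges sketch$_U$, sketch$_V$, $D_U$, $D_V$) and then contracts the result with $C_i$ (cost $\Theta(m^3 D_U D_V)$). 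The total is $\Theta(m^2 K_i D_U D_V + m^3 D_U D_V)$, which collapses to $\Theta(m^2 K_i D_U D_V)$ under $K_i\ge m$. Hence both embeddings realize the same leading-order local cost.

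Since the Kronecker-product sketching in \cref{line:sketch_kronecker} and the purely data-side contractions in $\st I$ are identical in the two algorithms, summing over all $i\in \st S$ shows that the tree-embedding variant has the same asymptotic total cost as \cref{alg:contract_sketch}, which by \cref{thm:optimal} is optimal for \eqref{eq:cost_opt_unconstrained}. The main obstacle is verifying that the local cost estimate above covers every configuration of $\hat U_i, \hat V_i$ that can arise when several binary-tree contractions have already been absorbed into one side, and that no alternative contraction ordering beats $\Theta(m^2 K_i D_U D_V)$ for either embedding; this ultimately rests on the same edge-weight monotonicity argument that drives the proof of \cref{thm:optimal}, combined with the uniform lower bound $K_i\ge m$ which suppresses any advantage the internal edge of $Z_i$ could have offered over the single tensor $C_i$.
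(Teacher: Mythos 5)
Your proposal is correct and follows essentially the same route as the paper: the paper likewise observes that the hypothesis forces $b_i\ge m$ (your $K_i\ge m$), so the single-tensor sketch's extra $a_ic_id_im^3$ term is absorbed into $a_ib_ic_id_im^2$ and both the tree-embedding cost and the lower-bound expression of \cref{thm:lowerbound_uniform} collapse to $\Theta\bigl(\sum_j \exp(\cut_G(v_j))m+\sum_i a_ib_ic_id_im^2\bigr)$. The only cosmetic difference is that you route optimality through \cref{thm:optimal} while the paper compares both costs directly to \cref{thm:lowerbound_uniform}, which amounts to the same thing.
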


We present the proof of \cref{thm:tree_optimal} in \cref{sec:appendix_tree}. As we will show in \cref{sec:exp}, for tensor network data with relatively large contracted dimension sizes such that the condition in \cref{thm:tree_optimal} is satisfied, sketching with tree embedding yields a similar performance as \cref{alg:contract_sketch}. However, for data where the condition in \cref{thm:tree_optimal} is not satisfied, \cref{alg:contract_sketch} is more efficient.
For example, when the data is a vector with a Kronecker product structure, sketching with \cref{alg:contract_sketch} yields a cost of $\Theta(
\sum_{j= 1}^{N} s_jm + 
Nm^{2.5}
)$ and sketching with a tree embedding yields a cost of  $\Theta(
\sum_{j= 1}^{N} s_jm + 
Nm^{3}
)$. We present the detailed analysis in \cref{subsec:appendix_costanalysis} and \cref{sec:appendix_tree}.

\section{Applications}\label{sec:app}

\paragraph{Alternating least squares for CP decomposition} 

On top of \cref{alg:contract_sketch}, we propose a new sketching-based ALS algorithm for CP tensor decomposition.
Throughout analysis we assume the input tensor is dense, and has order $N$ and size $s \times \cdots \times s$, and the CP rank is $R$.
The goal of CP decomposition is to minimize the objective function,
$  f(\mat{A}_1, \ldots, \mat{A}_N) = 
  \left\|
  \tsr{X} - \sum_{r=1}^{R} \mat{A}_1(:,r)\circ \cdots \circ \mat{A}_N(:,r)
  \right\|_F^2,$
where $\mat{A}_i\in\R^{s\times R}$ for $i\in[N]$ are called factor matrices, and $\tsr{X}$ denotes the input tensor. 
In each iteration of ALS, $N$ subproblems are solved sequentially, and the $i$th subproblem can be formulated as
$    \mat{A}_i = \arg\min_{\mat{A}}\left\|L_i\mat{A}{}^T - R_i\right\|^2_F, $
where $L_i= \mat{A}_1 \odot \cdots \odot  \mat{A}_{i-1}  \odot  \mat{A}_{i+1}\odot \cdots \odot \mat{A}_N$ consists of a chain of Khatri-Rao products, and
$R_i=\mat{X}_{(i)}^T$ is the transpose of $i$th matricization of $\tsr{X}$.

Multiple sketching-based randomized algorithms are proposed to accelerate each subproblem in CP-ALS~\cite{battaglino2018practical,larsen2020practical,malik2021more}. The sketched problem can be formulated as
$\mat{A}_i = \argmin{\mat{A}} \left\|S_iL_iA^T - S_iR_i\right\|_F^2,$
where $S_i$ is an embedding. The goal is to design $S_i$ such that the sketched subproblem can be solved efficiently and accurately.  In \cref{tab:alscompare}, we summarize two state-of-the-art sketching methods for CP-ALS. Larsen and Kolda~\cite{larsen2020practical} propose a method that sketches the subproblem based on (approximate) leverage score sampling (LSS), but both the per-iteration computational cost and the sketch size sufficient for $(\epsilon,\delta)$-accurate solution has an exponential dependence on $N$, which is inefficient for decomposing high order tensors. Malik~\cite{malik2021more} proposes a method called recursive leverage score sampling for CP-ALS, where the embedding contains two parts, $S_i = S_{i,1}S_{i,2}$, and $S_{i,2}$ is an embedding with a binary tree structure proposed in~\cite{ahle2020oblivious} with sketch size 
$\Theta(NR^2/\delta)$, and $S_{i,1}$ performs approximate leverage score sampling on $S_{i,2}L_i$ with sketch size $\Tilde{\Theta}(NR/\epsilon^2)$. This sketching method has a better dependence on $R$ in terms of per-iteration cost. 
For both algorithms, the preparation cost shown in \cref{tab:alscompare} denotes the cost to go over all elements in the tensor and initialize factor matrices using randomized range finder. As is shown in \cite{larsen2020practical,ma2021fast}, randomized range finder based initialization is critical for achieving accurate CP decomposition with sampling-based sketched ALS.

\begin{table}[!ht]
\small
  \begin{center}
    \renewcommand{\arraystretch}{1.2}
    {
    \begin{tabular}{l|l|l|l}
\hline
      CP-ALS algorithm  & Per-iteration cost & Sketch size ($m$) & Prep cost\\ \hline
      Standard ALS  & $\Theta(s^NR)$ & / & / \\ \hline
      LSS~\cite{larsen2020practical}  & 
      $\Tilde{\Theta}(N(R^{N+1} + sR^N)/\epsilon^2)$
      & 
      $\Tilde{\Theta}(R^{N-1}/\epsilon^2)$ 
      & $\Theta(s^N)$ \\ \hline
      Recursive LSS \cite{malik2021more}&  $\Tilde{\Theta}(N^2(R^4 + NsR^3/\epsilon)/\delta)$  & 
      $\Theta(NR^2/\delta)$ and $\Tilde{\Theta}(R/(\epsilon\delta))$ & $\Theta(s^N)$\\ \hline
      \cref{alg:contract_sketch} & 
      $
      \Tilde{\Theta}(N^2(N^{1.5}
       {\color{red} R^{3.5}}
      /\epsilon^3 + s
      {\color{red} R^2}
      )/\epsilon^2)
      $
      & $\Tilde{\Theta}(NR/\epsilon^2)$
      & $\Theta(s^Nm)$   \\ \hline
    \end{tabular}
    }
    \renewcommand{\arraystretch}{1}
  \end{center}
\caption{Comparison of asymptotic algorithmic complexity between standard CP-ALS, CP-ALS with leverage score sampling (LSS), CP-ALS with recursive leverage score sampling (recursive LSS), and sketching CP-ALS with \cref{alg:contract_sketch}.
The  third column
shows the sketch size sufficient for the sketched linear least squares to be $(1+\epsilon)$-accurate with probability at least $1-\delta$. By using $\Tilde{\Theta}$, we neglect logarithmic factors, including $\log(R)$ and $\log(1/\delta)$. 
}
\vspace{-7mm}
\label{tab:alscompare}
\end{table}

We propose a new sketching algorithm for CP-ALS based on \cref{alg:contract_sketch}. Each $S_i$ is generated on top of the data tensor network
$L_i$ and its given data contraction tree $T_i$, with the sketch size being $m=\Theta(NR\log(1/\delta)/\epsilon^2) = \Tilde{\Theta}(NR/\epsilon^2)$. The contraction trees $T_i$ for $i\in [N]$ are chosen in a fixed alternating order, such that the resulting embeddings $S_i$ for $i\in [N]$ have common parts and allow
reusing contraction intermediates. We leave the detailed analysis in \cref{subsec:cp_contractiontree}.

The ALS per-iteration cost is $\Theta(N(m^{2.5}R+smR)) = \Tilde{\Theta}(N^2(N^{1.5}R^{3.5}/\epsilon^3 + sR^2)/\epsilon^2)$. We present the detailed sketching algorithm and its cost analysis in \cref{subsec:cp_algorithm}.
When performing a low-rank CP decomposition with $s\gg R^{1.5}$ and $\epsilon$ is not too small so that $\epsilon = \Theta(1)$\footnote{As is shown in \cite{ma2021fast}, in practice, setting $\epsilon$ to be 0.1-0.2 will result in accurate sketched least squares with relative
residual norm error less than 0.05.}, the per-iteration cost is dominated by the term $\Tilde{\Theta}(N^2 sR^2/\epsilon^2)$, which is $\Theta(NR\epsilon/\delta)=\Omega(NR)$ times better than the per-iteration cost of the recursive LSS algorithm. For another case of a high-rank CP decomposition with $R\gg s$, which happens when one wants a high-accuracy CP decomposition of high order tensors, the per-iteration cost of our sketched CP-ALS algorithm is dominated by the term $\Tilde{\Theta}(N^{3.5}R^{3.5}/\epsilon^5)$, and the cost ratio between this algorithm and the recursive LSS algorithm is $\Tilde{\Theta}(N^{1.5}\delta/(\epsilon^5R^{0.5}))$. For this case, our algorithm is only preferable when $N^{1.5}\delta/\epsilon^5$ is not too large compared to $R^{0.5}$.

Although our proposed sketching algorithm yields better per-iteration asymptotic cost in multiple regimes compared to existing leverage score based sketching algorithms,
some preparation computations are needed to sketch right-hand-sides $S_iR_i$ for $i\in [N]$ before ALS iterations, and this cost is non-negligible. 
On the other hand, this algorithm has better parallelism, since it involves a sequence of matrix multiplications rather than sampling the matrix. 
We leave the detailed experimental comparison of computational efficiency of different sketching techniques for future work.

\paragraph{Tensor train rounding}
Given a tensor train, \textit{tensor train rounding} finds a tensor train with a lower rank to approximate the original representation.
Throughout analysis we assume the tensor train has order $N$ with the output dimension sizes equal $s$, the tensor train rank is $R<s$, and the goal is to round the rank to $r<R$. 
The standard tensor train rounding algorithm~\cite{oseledets2011tensor} consists of a right-to-left sweep of QR decompositions of the input tensor train (also called orthogonalization), and another left-to-right truncated singular value decompositions (SVD) sweep to perform rank reduction. 
The orthogonalization step is the bottleneck of the rounding algorithm, and costs $\Theta(NsR^3)$. 
Recently, \cite{daas2021randomized} has introduced a randomized rounding algorithm called ``\textit{Randomize-then-Orthogonalize}". Let $\mat{X}$ denote a matricization of  the tensor train data with all except one dimension at the end grouped into the row, the algorithm first sketches $X$ with 
a tensor train embedding $S$, then performs a sequence of truncated SVDs on top of $SX$.
The sketch size $m$ of $S$ is $r$ plus some constant, and is assumed to be smaller than $R$.
The bottleneck is to compute $SX$, which costs $\Theta(NsR^2m)$. 
 
We can recast the problem as finding an embedding satisfying the linearization sufficient condition with sketch size $m$, such that the asymptotic cost of computing $SX$ is optimal given the data contraction tree that contracts the tensor train from one end to another.
Our analysis (detailed in \cref{sec:tt}) shows that the sketching cost for the problem is lower bounded by $\Omega(NsR^2m)$, thus the sketching algorithm in \cite{daas2021randomized} attains the asymptotic cost lower bound and is efficient. 
Note that sketching with \cref{alg:contract_sketch} yields the same asymptotic cost, despite using a different embedding.

\section{Experiments}\label{sec:exp}

We compare our proposed embeddings with embeddings discussed in the reference.
The experiments are used to justify the theoretical analysis in \cref{thm:optimal} and \cref{thm:tree_optimal}.
We test the sketching performance 
on tensor train inputs and Kronecker product inputs.
Our experiments are carried out on an Intel Core i7 2.9 GHz Quad-Core machine using NumPy~\cite{oliphant2006numpy} routines in Python.

  \begin{wrapfigure}{r}{0.71\textwidth}
\centering
\includegraphics[width=0.35\textwidth, keepaspectratio]{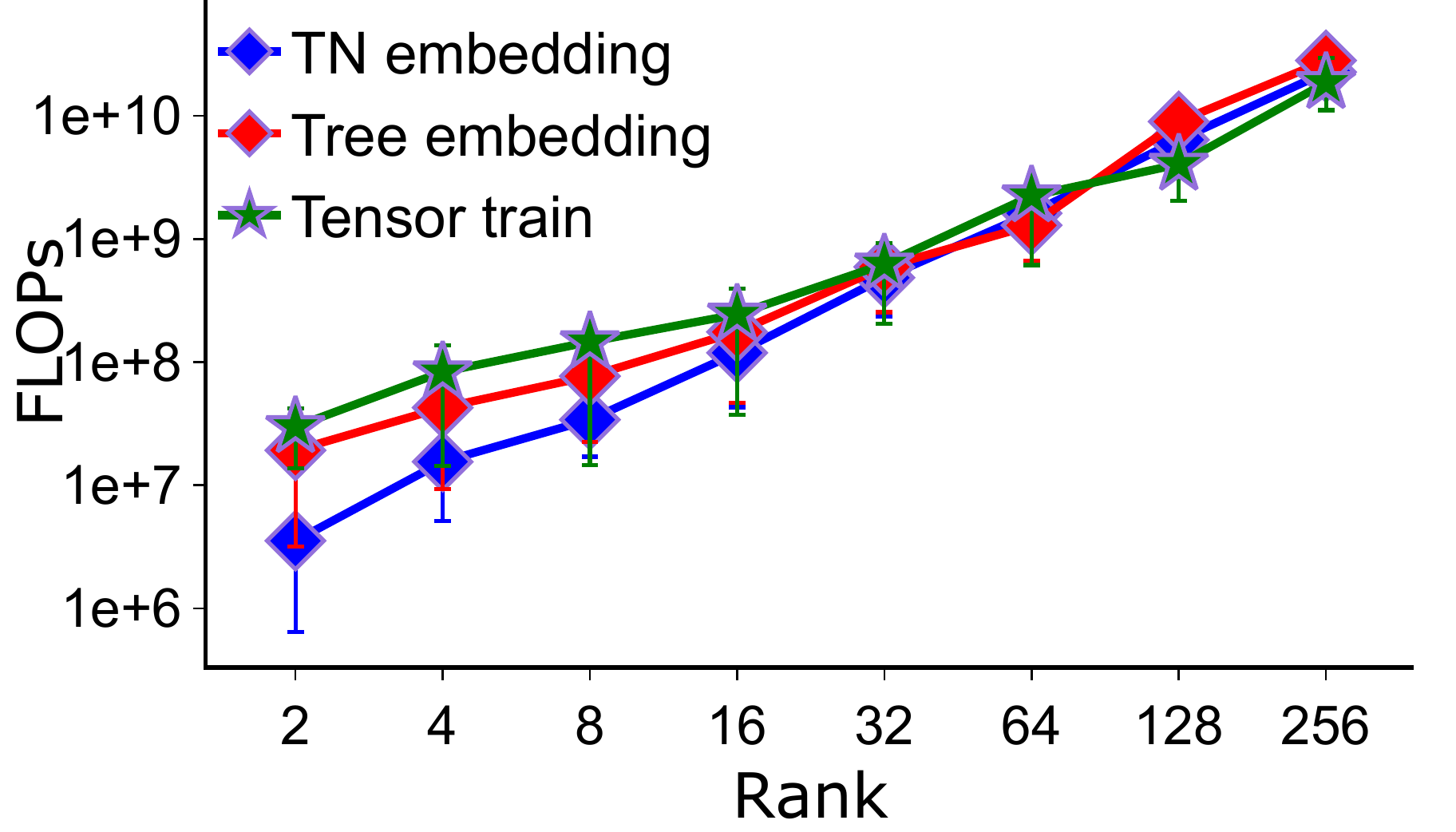}
\includegraphics[width=0.35\textwidth, keepaspectratio]{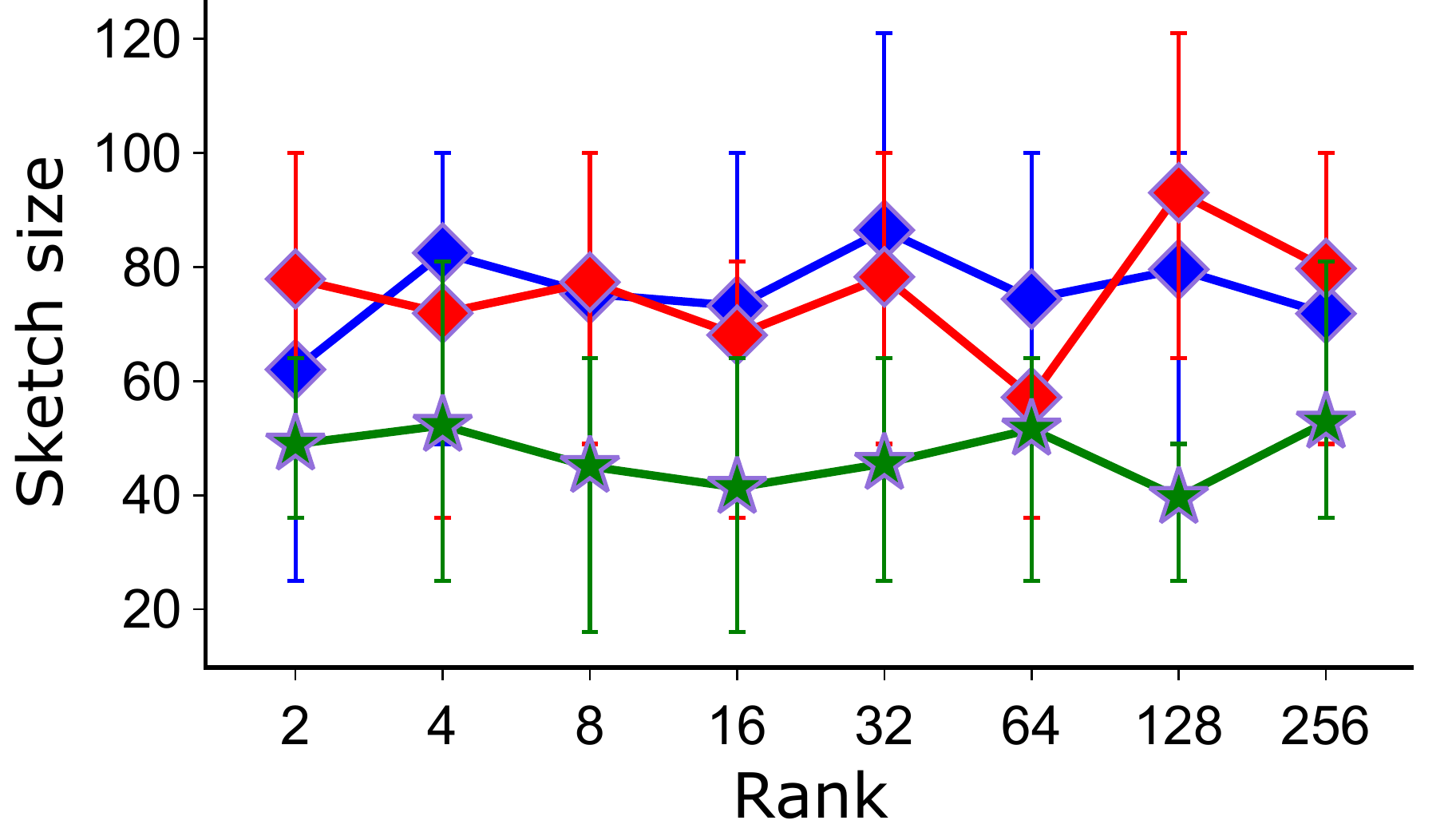}
	\vspace{-3mm}
  \caption{Results for sketching tensor train inputs. Each point denotes the mean value across 25 experiments, and each error bar shows the 25th-75th quartiles.
  }
  \label{fig:exp1}
	\vspace{-2mm}
  \end{wrapfigure}

We compare the performance of general tensor network embedding used in \cref{alg:contract_sketch} (called TN embedding), tree embedding discussed in \cref{thm:tree_optimal}, and tensor train embedding~\cite{daas2021randomized} in sketching tensor train input data in \cref{fig:exp1}. The input tensor train data has order 6, and the dimension size is 500. We test the sketching performance under different tensor train ranks. 
For a given rank, we randomly generate 25 different inputs, with each element in each tensor being an i.i.d. variable uniformly distributed within $[0, 1]$.
For each input $x$ and a specific embedding structure, we calculate the relative sketching error twice under different sketch sizes, and record the smallest sketch size such that both of its relative sketching errors are within 0.2, $\frac{\|Sx\|_2}{\|x\|_2}\leq 0.2$.
We also calculate the number of floating point operations (FLOPs) for computing $Sx$ under the smallest sketch size based on the classical dense matrix multiplication algorithm. As can be seen, tree and tensor train embeddings are as efficient as TN embedding in terms of number of FLOPs under relatively high tensor train rank (when rank is at least 32), but are less efficient than TN embedding when the tensor train rank is lower than 32. The results are consistent with the theoretical analysis in \cref{thm:tree_optimal}, which shows that tree embeddings yield the optimal asymptotic cost when the input tensor train rank is at least the output sketch size, but the asymptotic cost is not optimal when the tensor train rank is low.

  \begin{wrapfigure}{r}{0.71\textwidth}
\vspace{-2mm}
\centering
\includegraphics[width=0.35\textwidth, keepaspectratio]{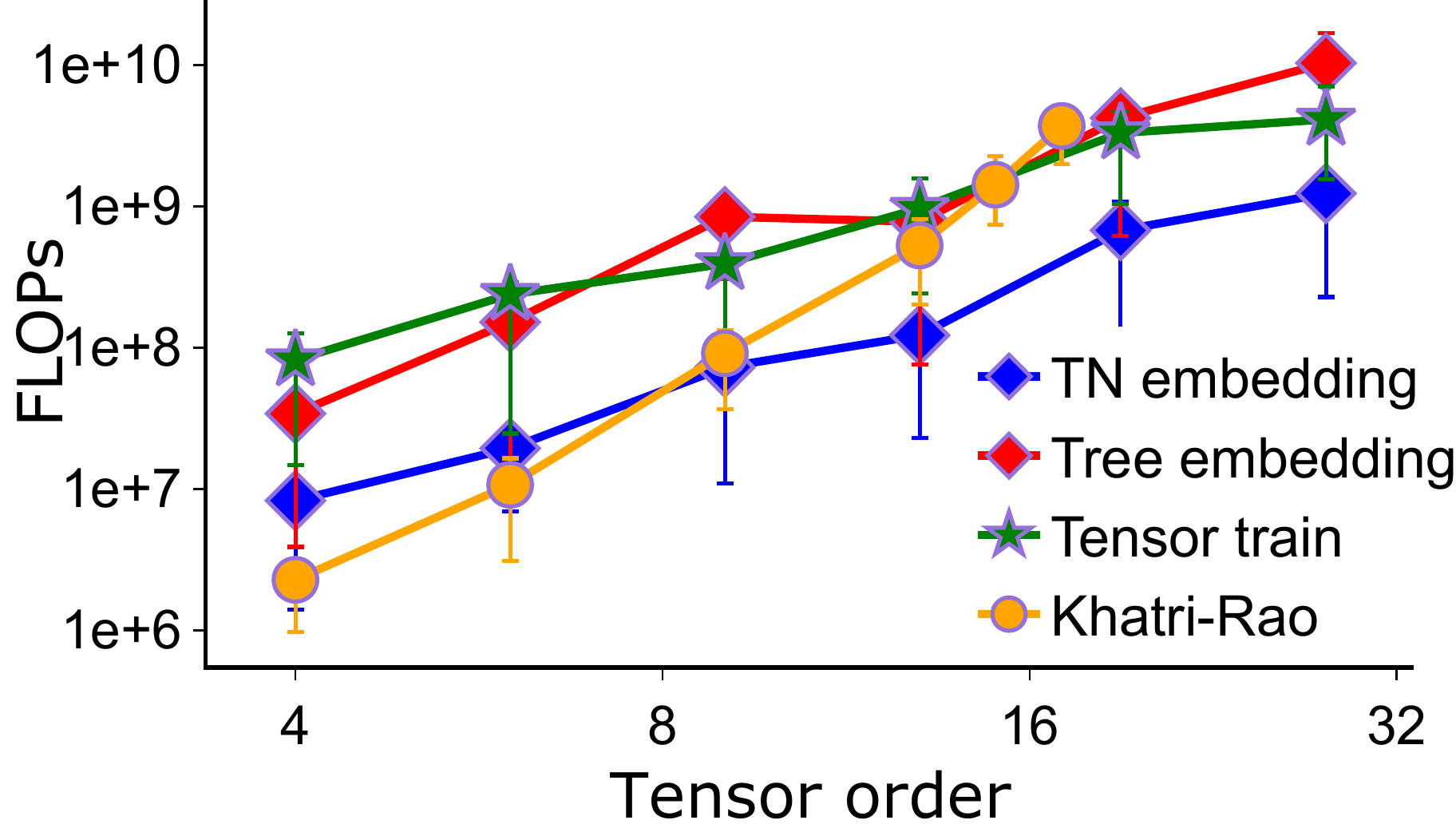}
\includegraphics[width=0.35\textwidth, keepaspectratio]{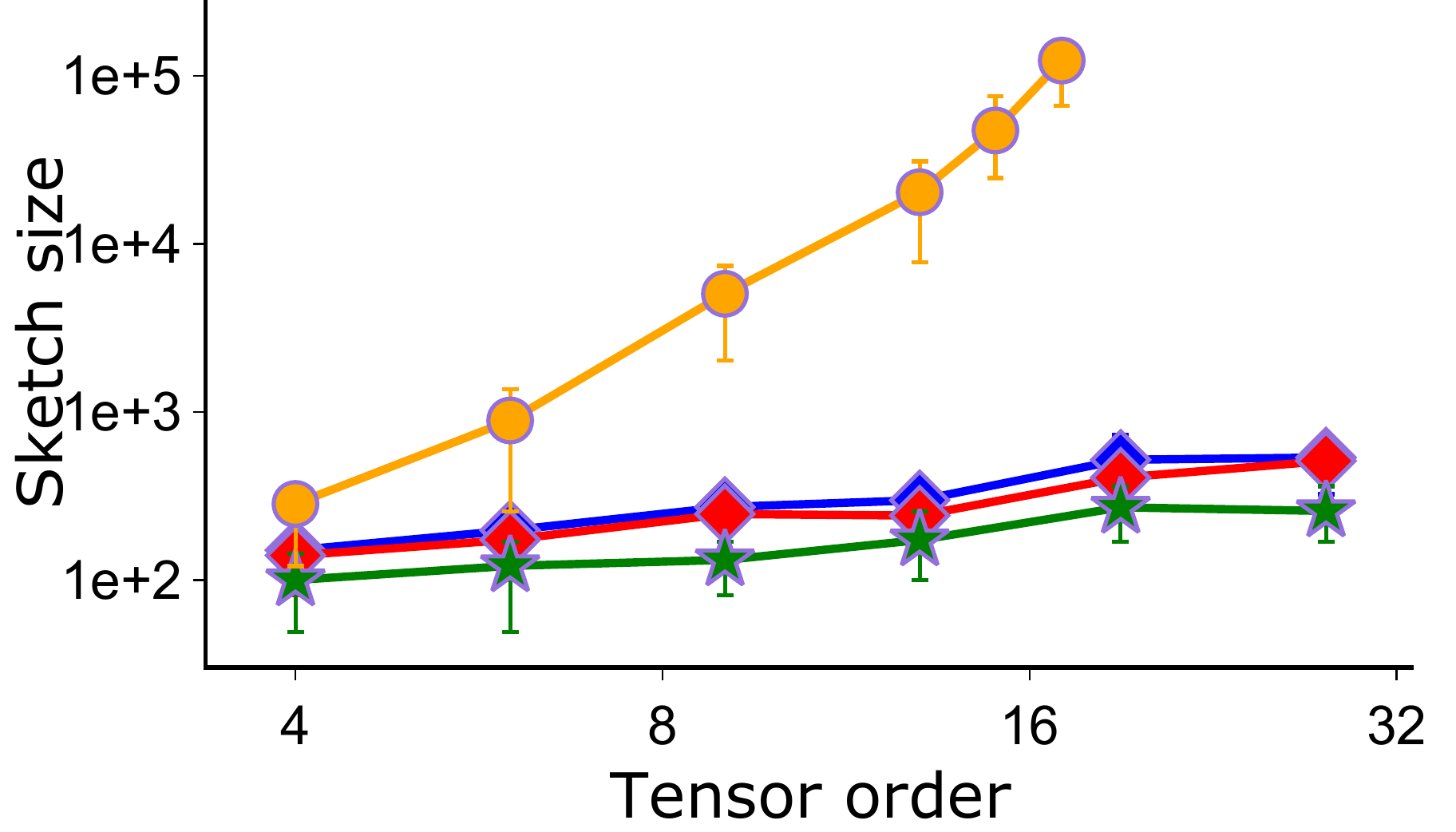}
	\vspace{-3mm}
  \caption{Results for sketching Kronecker product inputs.
  }
  \label{fig:exp2}
	\vspace{-2mm}
  \end{wrapfigure}

We also compare the performance of  TN, tree, tensor train and Khatri-Rao product embeddings in sketching Kronecker product inputs in \cref{fig:exp2}. Each dimension size of the Kronecker product input is fixed to be 1000, and we test the sketching performance under different tensor orders. 
For each input $x$ and a specific embedding structure, we record the smallest sketch size such that its relative sketching error is within 0.1.
As can be seen, compared to Khatri-Rao product embedding, the sketch size of 
TN, tree and tensor train embeddings all increase slowly with the increase of tensor order, consistent with the theoretical analysis that these embeddings have efficient sketch size. 
In addition, the cost in FLOPs of TN embedding is smaller than tree and tensor train embeddings. This is consistent with the analysis in \cref{thm:optimal} and its following discussions, showing that TN embedding yields the optimal asymptotic cost for Kronecker product inputs, but tree and tensor train embeddings do not.

\section{Conclusions}\label{sec:conclu}
We provide detailed analysis of general tensor network embeddings. 
For input data such that each dimension to be sketched has size greater than the sketch size, we provide an algorithm to efficiently sketch such data using Gaussian embeddings that can be linearized into a sequence of sketching matrices and have low sketch size.
Our sketching method is then used to design  state-of-the-art sketching algorithms for CP tensor decomposition and tensor train rounding.
We leave the analysis for more general embeddings for future work, including those with each tensor representing
fast sketching techniques, such as Countsketch and fast JL transform using fast Fourier transform, and those containing structures cannot be linearized, such as Khatri-Rao product embedding.
It would also be of interest to look at other tensor-related applications that could benefit from tensor network embedding, including tensor ring decomposition and simulation of quantum circuits.

\section*{Acknowledgments}
This work is supported by the US NSF OAC via award No. 1942995.

\bibliographystyle{abbrv}
\bibliography{main}

\clearpage

\normalsize
\appendix

\section{Background}\label{sec:background}
\subsection{Tensor algebra and tensor diagram notation}
\label{sec:notations}

Our analysis makes use of tensor algebra 
for tensor operations~\cite{kolda2009tensor}.
Vectors are denoted with lowercase Roman letters (e.g., $\vcr{v}$), matrices are denoted with uppercase Roman letters (e.g., $\mat{M}$), and tensors are denoted with calligraphic font (e.g., $\tsr{T}$). 
An order $N$ tensor corresponds to an $N$-dimensional array. 
For an order $N$ tensor $\tsr{T}\in \R^{s_1\times\cdots\times s_N}$, the size of $i$th dimension is $s_i$.
The $i$th column of the matrix $\mat{M}$ is denoted by $\mat{M}(:,i)$, and the $i$th row is denoted by $\mat{M}(i,:)$.
Subscripts are used to label different vectors, matrices and tensors (e.g. $\tsr{T}_1$ and $\tsr{T}_2$ are unrelated tensors). 
The Kronecker product of two vectors/matrices is denoted with $\otimes$, and the outer product of two or more vectors is denoted with $\circ$.
For matrices $\mat{A}\in \mathbb{R}^{m\times k}$ and $\mat{B}\in \mathbb{R}^{n\times k}$, their Khatri-Rao product results in a matrix of size $(mn)\times k$ defined by
$
  \mat{A}\odot \mat{B} = [\mat{A}(:,1)\otimes \mat{B}(:,1),\ldots, \mat{A}(:,k)\otimes \mat{B}(:,k)] .
$
Matricization is the process of unfolding a tensor into a matrix. The dimension-$n$ matricized version of $\tsr{T}$ is denoted by $\mat{T}_{(n)}\in \mathbb{R}^{s_n\times K}$ where $K=\prod_{m=1,m\neq n}^N s_m$. 

We introduce the graph representation for tensors, which is also called tensor diagram~\cite{bridgeman2017hand}. 
A tensor is represented by a vertex with hyperedges adjacent to it, each corresponding to a tensor dimension.
A matrix $\mat{M}$ and an order four tensor $\tsr{T}$ are represented as follows,
\begin{align*}
   \mat{M}
    \implies  
	\diagramsized{0.25}{
		\draw (0:0) -- (0:2);
		\draw (0:0) -- (0:-2);
		\draw[tengrey] circle (1);	
	}
\quad \quad \quad
	\tsr{T}
	 \implies  
	\diagramsized{0.25}{
		\draw (0:0) -- (90:2);
		\draw (0:0) -- (-90:2);
		\draw (0:0) -- (30-90:2);
		\draw (0:0) -- (-30-90:2);
		\draw[tengrey] circle (1);	
	}.
\end{align*}
The Kronecker product of two matrices $\mat{A}$ and $\mat{B}$ can be expressed as
\begin{align*}
\diagram{
\def\r{0.5}
\coordinate (c1) at (0, 0);
\coordinate (c2) at (2, 0);
\draw[tengrey] (c1) circle (\r) node {$A$};
\draw[shift=(c1)] ( 90:\r) -- ( 90:4*\r);
\draw[shift=(c1)] (270:\r) -- (270:4*\r);
\draw[tengrey] (c2) circle (\r) node {$B$};
\draw[shift=(c2)] ( 90:\r) -- ( 90:4*\r);
\draw[shift=(c2)] (270:\r) -- (270:4*\r);
\draw[dashed] ($0.5*(c1)+0.5*(c2)$) ellipse (2.25cm and 1cm);
}
=
\diagram{
\def\r{0.5}
\coordinate (c1) at (0, 0);
\coordinate (c2) at (2, 0);
\draw[shift=(c1)] ( 90:\r) -- ( 90:4*\r);
\draw[shift=(c1)] (270:\r) -- (270:4*\r);
\draw[shift=(c2)] ( 90:\r) -- ( 9 0:4*\r);
\draw[shift=(c2)] (270:\r) -- (270:4*\r);
\draw[tengrey] ($0.5*(c1)+0.5*(c2)$) ellipse (2.25cm and 1cm) node {$A\otimes B$};
}.
\end{align*}
Connecting two edges means two tensor dimensions are contracted or summed over. One example is shown in \cref{fig:notation2}.
\begin{figure}[!ht]
\centering
\includegraphics[width=.4\textwidth, keepaspectratio]{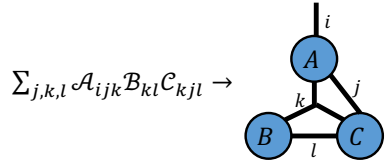}
  \caption{An example of tensor diagram notation.
  }
  \label{fig:notation2}
\end{figure}

\subsection{Background on sketching}\label{subsec:background_sketching}

In this section, we introduce definitions for sketching used throughout the paper. 
\begin{definition}[Gaussian embedding]\label{def:subgaussian}
A matrix $\mat{S} = \frac{1}{\sqrt{m}}\mat{M} \in \R^{m \times n}$ is a Gaussian embedding if each element of $\mat{M}$ is a normalized Gaussian random variable, $\mat{M}(i,j)\sim N(0,1)$. 
\end{definition}

One key property we would like the tensor network embedding to satisfy is the 
($\epsilon$, $\delta$)-accurate property. To achieve this, one central property each tensor in the tensor network embedding needs to satisfy is the Johnson-Lindenstrauss (JL) moment property. 
The JL moment property captures a bound on the moments of the
difference between the  vector Euclidean norm and the norm after sketching.
We introduce both definitions below.

\begin{definition}[($\epsilon$, $\delta$)-accurate embedding]\label{def:embedding}
A random matrix $S\in\R^{m\times n}$ has the $(\epsilon, \delta)$-accurate embedding property if for every $\vcr{x} \in \R^n$ with $\|\vcr{x}\|_2 = 1$,
 \[
 \Prob_{\mat{S}} \left(
 \left|\left\|\mat{Sx}\right\|_2^2 - 1\right| > \epsilon
 \right) < \delta.
 \]
 \end{definition}

\begin{definition}[($\epsilon$, $\delta$, $p$)-JL moment~\cite{kane2011almost,kane2014sparser}]\label{def:jl_moment}
A random matrix $S\in\R^{m\times n}$ has the $(\epsilon, \delta, p)$-JL moment property if for every $\vcr{x} \in \R^n$ with $\|\vcr{x}\|_2 = 1$,
\[
\E_{\mat{S}} \left|\left\|\mat{Sx}\right\|_2^2 - 1\right|^p < \epsilon^p \delta \quad \text{and} \quad \E\left[\left\|\mat{Sx}\right\|_2^2\right] = 1.
\]
 \end{definition}
 
\begin{definition}[Strong ($\epsilon$, $\delta$)-JL moment~\cite{kane2011almost,ahle2020oblivious}]\label{def:strong_jl_moment}
A random matrix $S\in\R^{m\times n}$ has the strong $(\epsilon, \delta)$-JL moment property if for every $\vcr{x} \in \R^n$ with $\|\vcr{x}\|_2 = 1$, and every integer $p\in [2, \log(1/\delta)]$,
\begin{equation}\label{eq:strong_jlmoment}
  \E_{\mat{S}} \left|\left\|\mat{Sx}\right\|_2^2 - 1\right|^p < \left(\frac{\epsilon}{e}\right)^p\left(\frac{p}{\log(1/\delta)}\right)^{p/2}
\end{equation}
and $\E\left[\left\|\mat{Sx}\right\|_2^2\right] = 1$.
\end{definition}

Note that the strong $(\epsilon, \delta)$-JL moment property directly reveals the $(\epsilon, \delta, \log(1/\delta))$-JL moment property, since letting $p = \log(1/\delta)$, \eqref{eq:strong_jlmoment} becomes
\[
\E_{\mat{S}} \left|\left\|\mat{Sx}\right\|_2^2 - 1\right|^{\log(1/\delta)} < \left(\frac{\epsilon}{e}\right)^{\log(1/\delta)} = \epsilon^p\delta.
\]
Both the strong  ($\epsilon$, $\delta$)-JL moment property
and the  ($\epsilon$, $\delta$, $p$)-JL moment property
directly imply ($\epsilon$, $\delta$)-accurate embedding via Markov's inequality, 
 \[
 \Prob_{\mat{S}} \left(
 \left|\left\|\mat{Sx}\right\|_2^2 - 1\right| > \epsilon
 \right) < \frac{\E\left|\left\|\mat{Sx}\right\|_2^2 - 1\right|^p}{\epsilon^p} < \delta.
 \]

The lemmas below show that Gaussian embeddings can be used to construct embeddings with the JL moment property. 
\begin{lemma}[Strong JL moment of Gaussian embeddings~\cite{kane2011almost}]\label{lem:subgaussian_strongjl}
Gaussian embeddings with $m=\Omega(\log(1/\delta)/\epsilon^2)$ satisfy the $(\epsilon,\delta)$-strong JL moment property.  
\end{lemma}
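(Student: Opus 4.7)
The plan is to reduce the statement to a moment bound for the centered chi-squared distribution and then pick $m$ accordingly. Fix a unit vector $x\in\R^n$. Since $S=\frac{1}{\sqrt m}M$ with $M$ having i.i.d.\ $N(0,1)$ entries, each row $M_i$ of $M$ is a standard Gaussian vector, so $\langle M_i,x\rangle\sim N(0,\|x\|_2^2)=N(0,1)$ and the $m$ inner products are mutually independent. Hence
\[
Y := m\,\|Sx\|_2^2 \;=\; \sum_{i=1}^{m} \langle M_i,x\rangle^2 \;\sim\; \chi^2_m,
\]
so the strong JL moment bound reduces to controlling $\E|Y-m|^p$ for integers $p\in[2,\log(1/\delta)]$.

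Next I would invoke the standard centered chi-squared moment inequality, which follows from Rosenthal's inequality (or equivalently from the fact that $Z_i^2-1$ are mean-zero sub-exponential): there is an absolute constant $C>0$ with
\[
\E\,|Y-m|^p \;\le\; C^p \max\!\left(m^{p/2} p^{p/2},\; p^{p}\right)
\qquad\text{for all integers }p\ge 2.
\]
Dividing by $m^p$ yields $\E\bigl|\|Sx\|_2^2-1\bigr|^p \le C^p \max\!\bigl((p/m)^{p/2},(p/m)^p\bigr)$, and the required bound from Definition~\ref{def:strong_jl_moment} takes the form $(\epsilon/e)^p (p/\log(1/\delta))^{p/2}$.

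The two cases are handled separately. In the sub-Gaussian regime $p\le m$, the dominant term is $(Cp/m)^{p/2}$, so it suffices that $Cp/m \le (\epsilon/e)^2\, p/\log(1/\delta)$, i.e.\ $m \ge C'\log(1/\delta)/\epsilon^2$ for a new constant $C'$. In the sub-exponential regime $p>m$, the dominant term is $(Cp/m)^p$; using $p\le\log(1/\delta)$ and $\epsilon\le 1$ the requirement becomes $m\ge C''\sqrt{p\log(1/\delta)}/\epsilon\ge C''\log(1/\delta)/\epsilon$, which is implied by the same choice $m=\Omega(\log(1/\delta)/\epsilon^2)$. Finally, $\E\|Sx\|_2^2 = \E[Y]/m = 1$, completing both conditions of Definition~\ref{def:strong_jl_moment}.

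I do not expect a serious obstacle: the only non-mechanical ingredient is the two-regime moment bound for centered chi-squared, which is classical (Laurent--Massart tail bounds integrated, or Rosenthal applied to $Z_i^2-1$). The rest is bookkeeping of constants to verify that a single threshold $m=\Omega(\log(1/\delta)/\epsilon^2)$ suffices uniformly over $p\in[2,\log(1/\delta)]$.
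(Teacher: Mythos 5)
Your proof is correct. Note that the paper itself does not prove this lemma: it is stated with a citation to Kane and Nelson, so there is no in-paper argument to compare against. Your reduction to the centered $\chi^2_m$ distribution, followed by the two-regime moment bound $\E|Y-m|^p \le C^p\max\left(m^{p/2}p^{p/2},\,p^p\right)$ (Rosenthal/Bernstein for sums of centered sub-exponential variables, or integration of the Laurent--Massart tail), is exactly the standard route to this statement, and the bookkeeping in the sub-Gaussian regime $p\le m$ correctly produces the threshold $m=\Omega(\log(1/\delta)/\epsilon^2)$. Two small remarks. First, in the sub-exponential regime your chain $m\ge C''\sqrt{p\log(1/\delta)}/\epsilon\ge C''\log(1/\delta)/\epsilon$ has the second inequality pointing the wrong way (for $p\le\log(1/\delta)$ one has $\sqrt{p\log(1/\delta)}\le\log(1/\delta)$); what you mean is that the requirement $m\ge C''\sqrt{p\log(1/\delta)}/\epsilon$ is \emph{implied} by $m\ge C''\log(1/\delta)/\epsilon$, which your choice of $m$ satisfies. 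Second, that regime is in fact vacuous: once $m\ge C\log(1/\delta)/\epsilon^2$ with $\epsilon\le 1$ and $C\ge 1$, every admissible integer $p\le\log(1/\delta)$ automatically satisfies $p\le m$, so only the sub-Gaussian case ever occurs. Neither point affects correctness.
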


Below we review the composition rules of JL moment properties introduced in \cite{ahle2020oblivious}, which are used to prove the ($\epsilon,\delta$)-accurate sufficient condition in \cref{thm:subgaussian_embedding}.

\begin{lemma}[JL moment with Kronecker product]\label{lem:kronecker_jl}
If a matrix $\mat{S}$ has the $(\epsilon, \delta, p)$-JL moment property, then  the matrix $\mat{M} = \mat{I}_i \otimes \mat{S} \otimes \mat{I}_{j}$ also has the $(\epsilon, \delta, p)$-JL moment property for identity matrices $\mat{I}_i$ and $\mat{I}_j$ with any size. This relation also holds for the strong $(\epsilon, \delta)$-JL moment property.
\end{lemma}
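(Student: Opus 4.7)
The plan is to reduce the action of $\mat{M} = \mat{I}_i \otimes \mat{S} \otimes \mat{I}_j$ on a unit vector to a convex combination of single applications of $\mat{S}$ to unit vectors, and then invoke Minkowski's inequality in $L^p$ to transfer the JL moment bound. Concretely, I would first reshape any $\vcr{x} \in \R^{inj}$ with $\|\vcr{x}\|_2 = 1$ into a three-index tensor $\tsr{X}(a,b,c)$ with $a \in [i]$, $b \in [n]$, $c \in [j]$, so that each $\mat{S}$-fiber $\vcr{y}_{a,c} := \tsr{X}(a,:,c)$ satisfies $(\mat{M}\vcr{x})(a,b',c) = (\mat{S}\vcr{y}_{a,c})(b')$. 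This yields $\|\mat{M}\vcr{x}\|_2^2 = \sum_{a,c} \|\mat{S}\vcr{y}_{a,c}\|_2^2$ and $\sum_{a,c} \|\vcr{y}_{a,c}\|_2^2 = 1$. Writing $\alpha_{a,c} := \|\vcr{y}_{a,c}\|_2^2$ and $\hat{\vcr{y}}_{a,c} := \vcr{y}_{a,c}/\|\vcr{y}_{a,c}\|_2$ (skipping null fibers), this gives the identity
\[
\|\mat{M}\vcr{x}\|_2^2 - 1 = \sum_{a,c} \alpha_{a,c}\bigl(\|\mat{S}\hat{\vcr{y}}_{a,c}\|_2^2 - 1\bigr), \qquad \sum_{a,c}\alpha_{a,c}=1.
\]

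Next, I would verify the expectation condition by linearity: $\E[\|\mat{M}\vcr{x}\|_2^2] = \sum_{a,c}\alpha_{a,c}\E[\|\mat{S}\hat{\vcr{y}}_{a,c}\|_2^2] = \sum_{a,c}\alpha_{a,c} = 1$, using the second half of the $(\epsilon,\delta,p)$-JL moment property on each unit vector $\hat{\vcr{y}}_{a,c}$. Then I would bound the $p$-th moment via Minkowski's inequality in $L^p(\Prob_{\mat{S}})$: letting $Z_{a,c} := \|\mat{S}\hat{\vcr{y}}_{a,c}\|_2^2 - 1$,
\[
\Bigl\|\textstyle\sum_{a,c}\alpha_{a,c} Z_{a,c}\Bigr\|_{L^p} \le \sum_{a,c}\alpha_{a,c}\|Z_{a,c}\|_{L^p} \le \sum_{a,c}\alpha_{a,c}\cdot\epsilon\delta^{1/p} = \epsilon\delta^{1/p},
\]
which raised to the $p$-th power gives exactly $\E\bigl|\|\mat{M}\vcr{x}\|_2^2 - 1\bigr|^p \le \epsilon^p\delta$. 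For the strong $(\epsilon,\delta)$-JL moment version, the target bound $(\epsilon/e)^p(p/\log(1/\delta))^{p/2}$ again has the form $B^p$ with $B = (\epsilon/e)(p/\log(1/\delta))^{1/2}$, so the same Minkowski step applied for each $p \in [2,\log(1/\delta)]$ preserves the bound.

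The main technical subtlety — not a deep obstacle, but the reason the argument must be organized this way — is that all the random variables $Z_{a,c}$ involve the \emph{same} random matrix $\mat{S}$, so one cannot appeal to independence between fibers. Minkowski in $L^p$ is exactly the right tool because it requires no independence and preserves the $p$-th moment bound under convex combinations, which is all that is needed here. Everything else (the tensor reshaping and the normalization) is routine bookkeeping.
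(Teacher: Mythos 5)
Your proof is correct. The paper itself states this lemma without proof, citing Ahle et al.~\cite{ahle2020oblivious}, and your argument --- reshaping $\vcr{x}$ into fibers so that $\|\mat{M}\vcr{x}\|_2^2-1$ becomes a convex combination of the deviations $\|\mat{S}\hat{\vcr{y}}_{a,c}\|_2^2-1$, then applying Minkowski's inequality in $L^p$ (which, as you note, needs no independence across fibers and preserves any bound of the form $B^p$, hence covers the strong JL moment case as well) --- is essentially the standard proof of this composition rule given in that reference.
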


\begin{lemma}[Strong JL moment with matrix product]\label{lem:composition_strongjl}
There exists a universal constant $L$, such that for any constants $\epsilon, \delta \in [0, 1]$ and any integer $k$, if $\mat{M}_1 \in \R^{d_2\times d_1}, \cdots, \mat{M}_k \in \R^{d_{k+1}\times d_k}$ are independent random matrices, each having the strong
$\left(\frac{\epsilon}{L\sqrt{k}}, \delta\right)$-JL moment property, then the product matrix $\mat{M} = \mat{M}_k \cdots \mat{M}_1$ satisfies the strong $(\epsilon, \delta)$-JL moment property.
\end{lemma}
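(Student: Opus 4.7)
\textbf{Proof plan for Lemma \ref{lem:composition_strongjl}.} Fix a unit vector $x$ and an integer $p \in [2, \log(1/\delta)]$. The goal is to show $\E |\|\mat{M}x\|_2^2 - 1|^p \leq (\epsilon/e)^p (p/\log(1/\delta))^{p/2}$ by combining per-factor JL-moment bounds through a martingale argument. The plan is to telescope $\|\mat{M}x\|_2^2 - 1$ across the $k$ factors, recognize the summands as martingale differences, use a Rosenthal/Pinelis-type $p$-th moment inequality to aggregate them without exponential blowup, and pick $L$ universal at the end.

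\textbf{Step 1 (telescoping into a martingale).} Define $v_i = \mat{M}_i \cdots \mat{M}_1 x$ with $v_0 = x$, set $Y_i = \|v_i\|_2^2$, and $D_i = Y_i - Y_{i-1}$, so $Y_k - 1 = \sum_{i=1}^k D_i$. Let $\mathcal{F}_{i-1} = \sigma(\mat{M}_1, \ldots, \mat{M}_{i-1})$. When $v_{i-1}\neq 0$, write $u_{i-1} = v_{i-1}/\|v_{i-1}\|_2$, which is $\mathcal{F}_{i-1}$-measurable, and observe
\[
    D_i = \|v_{i-1}\|_2^2 \bigl(\|\mat{M}_i u_{i-1}\|_2^2 - 1\bigr).
\]
Since $\mat{M}_i$ is independent of $\mathcal{F}_{i-1}$ and has the $(\epsilon/(L\sqrt{k}), \delta)$-strong JL property applied to the deterministic (conditional on $\mathcal{F}_{i-1}$) unit vector $u_{i-1}$, we have $\E[D_i \mid \mathcal{F}_{i-1}] = 0$ and
\[
    \E\bigl[|D_i|^p \,\big|\, \mathcal{F}_{i-1}\bigr] \leq \|v_{i-1}\|_2^{2p}\,\Bigl(\frac{\epsilon}{eL\sqrt{k}}\Bigr)^{p} \Bigl(\frac{p}{\log(1/\delta)}\Bigr)^{p/2}.
\]
The $p=2$ specialization gives the conditional-variance bound $\E[D_i^2\mid \mathcal{F}_{i-1}] \leq 2\|v_{i-1}\|_2^4 \,(\epsilon/(eL\sqrt{k}))^2/\log(1/\delta)$.

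\textbf{Step 2 (martingale $p$-th moment inequality, and uniform control of $\|v_{i-1}\|$).} Apply a Burkholder/Rosenthal-type inequality of the form
\[
    \Bigl\|\sum_{i=1}^k D_i\Bigr\|_p \leq C\sqrt{p}\,\Bigl\|\Bigl(\sum_{i=1}^k \E[D_i^2 \mid \mathcal{F}_{i-1}]\Bigr)^{1/2}\Bigr\|_p + Cp\,\Bigl\|\max_i |D_i|\Bigr\|_p,
\]
so that aggregating $k$ conditional variance bounds yields a sum of order $k \cdot (\epsilon/(eL\sqrt{k}))^2/\log(1/\delta) = (\epsilon/(eL))^2/\log(1/\delta)$, \emph{independent of $k$}. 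This is the critical step: the naive $L^p$ triangle inequality produces a multiplicative recursion that blows up exponentially in $k$, whereas the martingale inequality plus the $\sqrt{k}$ rescaling of $\epsilon$ in the hypothesis conspire to give a $k$-independent bound. To close the argument I proceed by induction on $k$: the inductive hypothesis that the partial product $\mat{M}_{i-1}\cdots\mat{M}_1$ already satisfies a weaker strong JL bound implies $\|Y_{i-1}\|_p \leq 1 + \|Y_{i-1}-1\|_p \leq 2$ (using $\epsilon \leq 1$ and $p \leq \log(1/\delta)$), so $\E[\|v_{i-1}\|^{2p}]^{1/p}$ and $\E[\|v_{i-1}\|^{4p}]^{1/p}$ are bounded by absolute constants. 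The tail term $\|\max_i |D_i|\|_p \leq (\sum_i \E|D_i|^p)^{1/p}$ is absorbed similarly since each term carries the same $\epsilon/(L\sqrt{k})$ scaling.

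\textbf{Step 3 (choosing $L$ and concluding).} Collecting bounds from Step 2 produces an estimate
\[
    \|Y_k - 1\|_p \leq \frac{C'\epsilon}{eL}\sqrt{\frac{p}{\log(1/\delta)}},
\]
for an absolute constant $C'$ depending only on the Rosenthal constant $C$ and the uniform bound on $\|v_{i-1}\|_2$. Choosing $L \geq C'$ gives the required strong $(\epsilon,\delta)$-JL moment bound, and $\E[\|\mat{M}x\|_2^2] = 1$ follows by taking iterated expectations and using $\E[\|\mat{M}_i u\|_2^2]=1$ from each factor's JL moment hypothesis. The main obstacle is precisely the step that avoids $\sqrt{p}$-loss per factor: a loose martingale inequality would force $L$ to depend on $k$ or on $\log(1/\delta)$, breaking the universality claim. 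A secondary obstacle is making the induction base case and the uniform $\|v_{i-1}\|$ control mutually compatible, which requires the hypothesis $\epsilon \leq 1$ and the restriction $p \leq \log(1/\delta)$ present in the definition.
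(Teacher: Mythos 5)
The paper does not actually prove this lemma: it is imported verbatim from Ahle et al.~\cite{ahle2020oblivious} (the surrounding text says the composition rules are being ``reviewed''), so there is no in-paper argument to compare against. Your plan follows the same strategy as that source — telescope $\|\mat{M}_k\cdots\mat{M}_1x\|_2^2-1$ into martingale differences $D_i=\|v_{i-1}\|_2^2(\|\mat{M}_iu_{i-1}\|_2^2-1)$, feed the per-factor strong JL moment bounds into the conditional moments, and aggregate with a Rosenthal/Pinelis-type martingale inequality so that the increments accumulate in $\ell_2$ rather than $\ell_1$, which is exactly where the $\sqrt{k}$ rescaling is spent. Step~1 and the treatment of the conditional square-function term in Step~2 are correct: $\sqrt{p}\,\|(\sum_i\E[D_i^2\mid\mathcal F_{i-1}])^{1/2}\|_p\le C'\tfrac{\epsilon}{eL}\sqrt{p/\log(1/\delta)}$ does come out $k$-independent once $\|Y_{i-1}\|_p\le 2$ is secured by induction.

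The genuine gap is in your handling of the second term of the inequality, which you describe as ``absorbed similarly.'' It is not. With your bound $\|\max_i|D_i|\|_p\le(\sum_i\E|D_i|^p)^{1/p}\le 2k^{1/p}\tfrac{\epsilon}{eL\sqrt{k}}\sqrt{p/\log(1/\delta)}$, the contribution $Cp\|\max_i|D_i|\|_p$ carries the coefficient $2Cp\,k^{1/p}/(L\sqrt{k})$, and $p\,k^{1/p-1/2}$ is \emph{not} bounded by a universal constant over the allowed range: fix $k$ and let $p\to\infty$ within $[2,\log(1/\delta)]$ (i.e.\ take $\delta$ small) and it grows like $p/\sqrt{k}$. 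So the $L$ you extract in Step~3 would have to depend on $\log(1/\delta)$, which is precisely the failure mode you yourself identify as the main obstacle. Closing this requires more than the crude $\ell_p$-sum bound on the max — e.g.\ exploiting that each increment is conditionally sub-Gaussian with parameter $\Theta(\epsilon/(L\sqrt{k\log(1/\delta)}))$ for all moments up to $\log(1/\delta)$, so that the max over $k$ of them (and hence the correction term) is genuinely smaller than the naive sum suggests; this is the technical content of the corresponding lemmas in \cite{ahle2020oblivious}. As written, your Step~2 asserts the conclusion of the hard step rather than proving it, so the proposal is a correct outline with an unfilled hole at its load-bearing point. A secondary loose end: the induction giving $\|Y_{i-1}\|_p\le 2$ should be stated as an induction on the prefix length $i$ with an explicit intermediate accuracy parameter, since it is used inside the very estimate that establishes the inductive step.
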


\section{Definitions and basic properties of tensor network embedding}
\label{sec:linearization}
In this section, we introduce definitions and basic properties of tensor network embeddings. These properties will be used in \cref{sec:alg_detail} and \cref{sec:lowerbound} for detailed computational cost analysis. The notation defined in the main text is summarized in \cref{tab:notations}, which is also used in later analysis.

\begin{table}[!ht]
  \begin{center}
    \renewcommand{\arraystretch}{1.6}
    {
    \begin{tabular}{l|l}
\hline
      Notations  & Meanings \\ \hline
      $S, S_i$  & Embedding matrix \\ \hline
      $m$  & Sketch size
       \\ \hline
      $\Ge=(\Ve,\Ee,w)$ &  Embedding tensor network  \\ \hline
      $\Gd=(\Vd,\Ed,w)$ &  Input data tensor network  \\ \hline
      $\bar{E} = \{e_1,\ldots, e_N\}$ & 
     Set of edges to be sketched
         \\ \hline
      $s_i$ & 
     Size of $e_i$ in $\bar{E}$
         \\ \hline
      $T_0$ & 
     Given data contraction tree
         \\ \hline
    $\st{D}(e_1),\ldots, \st{D}(e_N), \st{S}, \st{I}$ & 
    Subsets of contractions in $T_0$ \\ \hline
    $X(e_i)$  & Sub network contracted by $\st{D}(e_i)$ \\ \hline
    \end{tabular}
    }
    \renewcommand{\arraystretch}{1}
  \end{center}
\caption{Notations used throughout the paper.
}
\label{tab:notations}
\end{table}

\subsection{Graph notation 
for tensor network and tensor contraction
}\label{subsec:graph_notation}

We use undirected hypergraphs to represent tensor networks.
For a given hypergraph $G=(V,E,w)$, $V$ represents the vertex set, $E$ represents the set of hyperedges, and $w$ is a function such that $w(e)$ is the natural logarithm of the tensor dimension size represented by the hyperedge $e\in E$. 
We use $E(u,v)$ to denote 
the set of hyperedges adjacent to both $u$ and $v$, which includes the edge $(u,v)$ and hyperedges adjacent to $u,v$.
We use $E(A,B)$ to denote the set of hyperedges connecting two subsets $A,B$ of $V$ with $A\cap B = \emptyset$. 
We use $E(A, *)$ to denote all uncontracted edges only adjacent to $A$, $E(A,*) = \{(u)\in E: u\in A\}$. we illustrate $E(A,B), E(A,*)$ in \cref{fig:E_AB_example}.
For any set $A\subseteq V$, we let 
\begin{align}\label{eq:ES}
    E(A) &= E(A, V\setminus A) \cup E(A, *).
\end{align}
A tensor network implicitly represents a tensor with a set of (small) tensors and a specific contraction pattern.
 We use $G[A]=(A,E_A,w)$ to denote a sub tensor network defined on $A\subseteq V$, where $E_A$ contains all hyperedges in $E$ adjacent to any $v\in A$.

\begin{figure}
\centering
\includegraphics[width=.35\textwidth, keepaspectratio]{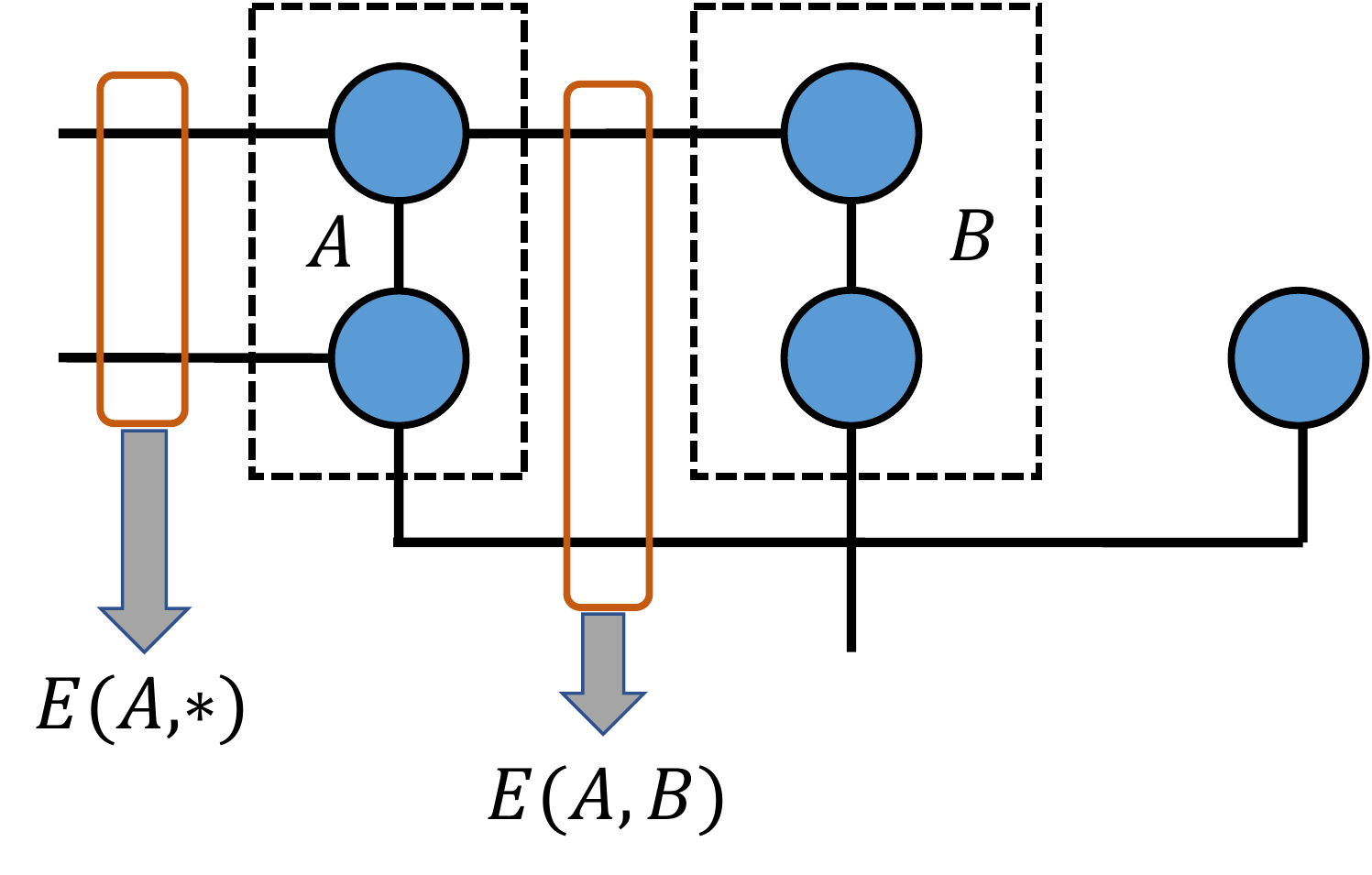}
  \caption{An example of $E(A,*)$ and $E(A,B)$, where both $A,B$ are subset of vertices.
  }
  \label{fig:E_AB_example}
\end{figure} 

Our analysis also use directed graphs to represent tensor network linearizations. We use $E(u,v)$ to denote the edge from $u$ to $v$, and similarly use $E(A,B)$ to denote the set of edges from $A$ to $B$.

When representing the contraction tree, we use $(v_1, v_2)$ to denote the contraction of $v_1,v_2$. This notation is also used to represent multiple contractions. For example, we use $(((v_1,v_4),(v_2,v_5)),v_3)$ to represent the contraction tree shown in \cref{fig:example_threetrees}.
The computational cost of a contraction tree is the summation of each contraction's cost. 
In the discussion throughout the paper, we assume that all tensors in the network are dense. Therefore,
the contraction of two general dense tensors $\tsr{A}$ and $\tsr{B}$, represented as vertices $v_a$ and $v_b$ in $G=(V,E,w)$, can be cast as a matrix multiplication, and the overall asymptotic cost is 
\[\bigTheta{ \exp\left(w(E(v_a)) + w(E(v_b)) - w(E(v_a,v_b))\right)}
\]
with classical matrix multiplication algorithms.
In general, contracting tensor networks with arbitrary structure is  \#P-hard~\cite{damm2002complexity,valiant1979complexity}.

\begin{figure}[!ht]
\centering

\subfloat[]{\includegraphics[width=0.2\textwidth, keepaspectratio]{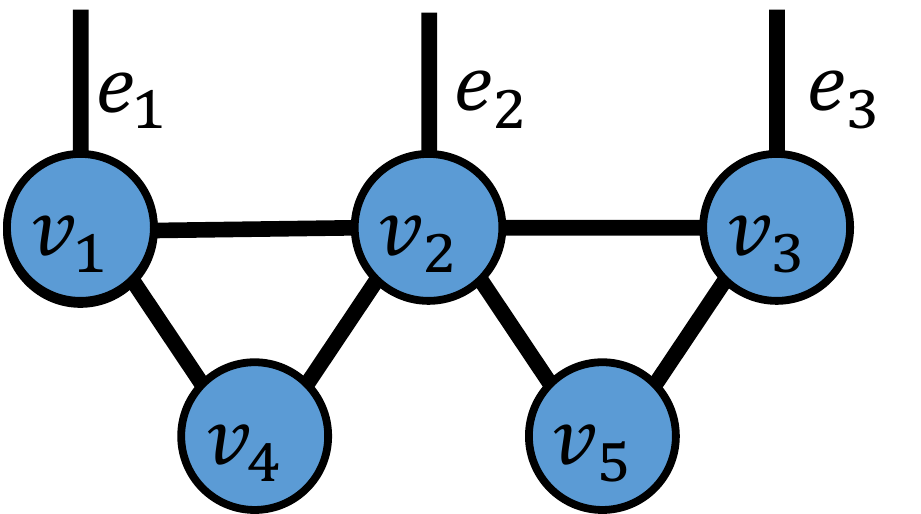}\hspace{6mm}}
\subfloat[]{\hspace{6mm}\includegraphics[width=0.25\textwidth, keepaspectratio]{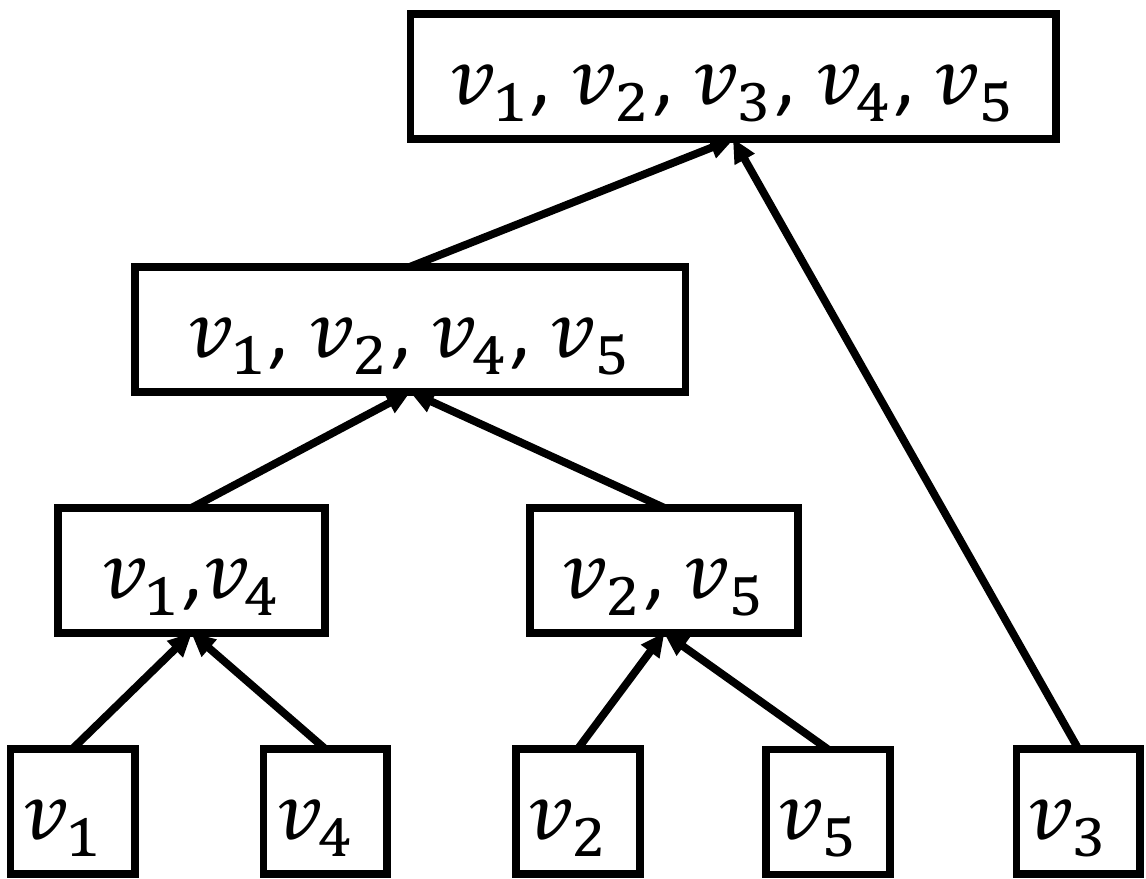}\hspace{6mm}}
\subfloat[]{\hspace{6mm}\includegraphics[width=0.18\textwidth, keepaspectratio]{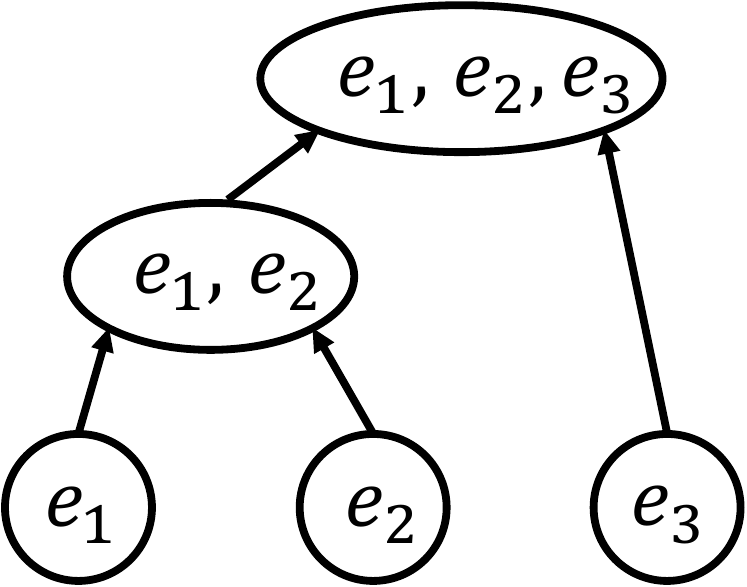}}

\caption{Example of a tensor network, its contraction tree and the corresponding dimension tree. 
}
\label{fig:example_threetrees}
\end{figure}

For a given data $\Gd$ and its given contraction tree $T_0$, its dimension tree is a directed binary tree showing the way edges
 in $\bar{E}$ are merged onto the same tensor. Each vertex in the dimension tree is a subset $E'\subseteq \bar{E}$, and for any two vertices $E_1',E_2'$ of the dimension tree with the same parent, there is a contraction in $T_0$ such that the two input tensors are incident to $E_1',E_2'$, respectively. One example is shown in \cref{fig:example_threetrees}.

\subsection{Definitions used in the analysis of tensor network embedding}
In this section, we introduce definitions that will be used in later analysis. For a (hyper)graph $G = (V, E, w)$ and two subsets of $V$ denoted as $A,B$, we define 
$\cut_G(A,B) = \sum_{e\in E(A,B)}w(e)$.
Similarly, we define $\cut_G(A,*) = \sum_{e\in E(A,*)}w(e)$, and define $\cut_G(A) = \sum_{e\in E(A)}w(e)$, where $E(A)$ is expressed in \eqref{eq:ES}.
When $G$ is a directed hypergraph, $\cut_G(A,B)$ denotes the sum of the weights of edges from $A$ to $B$.
When $G$ is an undirected graph, $\cut_G(A,B)$ denotes the sum of the weights of hyperedges connecting $A$ and $B$. 

For two tensors represented by two subsets $A,B\subset V$ and $A\cap B = \emptyset$, 
the logarithm of the contraction cost
between a tensor represented by $A$ and a tensor represented by $B$,
$(A, B)$, is
\[
\cost_G(A, B)  = \cut_G(A) + \cut_G(B) - \cut_G(A, B).
\]
Note that the function $\cost$ is only defined on undirected hypergraphs.

 Consider a given input data $\Gd= (\Vd,\Ed,w)$ and an embedding $\Ge= (\Ve,\Ee,w)$. Below we let $V = \Ve\cup \Vd$, $E = \Ee\cup \Ed$, and $G = (V,E,w)$ denote the hypergraph including both the embedding and the input data.
We use $\GL = (V, \Ee, w)$ to denote the graph including $V$ and all edges in the embedding, and use $\GR = (V, E\setminus \Ee, w)$.
Note that in this work we focus on the case where $\GL$ is a graph, and $\GR$ can be a general hypergraph.
We illustrate $G, \Gd, \Ge, \GL, \GR$ in \cref{fig:directed_linearization}.
For any $A,B\subset V$ and $A\cap B = \emptyset$, we have 
\begin{equation}\label{eq:cut1}
    \cut_G(A) = \cut_\GL(A) + \cut_\GR(A),
\end{equation}
and 
\begin{equation}\label{eq:cut2}
\cut_G(A,B) = \cut_\GL(A,B) + \cut_\GR(A,B).
\end{equation}
Based on \eqref{eq:cut1} and \eqref{eq:cut2}, we have 
\[
\cost_G(A, B)  = \cost_\GL(A, B)  + \cost_\GR(A, B).
\]

Our analysis of tensor network embedding is based on the linearization of the tensor network graph. Linearization casts an undirected graph into a \textit{directed acyclic graph} (DAG).
We define linearization formally below, then specify linearizations of the data and embedding graphs that our analysis considers.

\begin{definition}[Linearization DAG]\label{def:directed_linearization}
A linearization of the undirected graph $G=(V,E,w)$ is defined by the DAG $G' = (V,E',w)$ induced by a given choice of vertex ordering in $V$. For each contracted edge in $E$, $E'$ contains an same-weight edge  directing towards the higher indexed vertex.
For each uncontracted edge in $E$, $E'$ contains an edge with the same weight that is directed outward from the vertex it is adjacent to. 
\end{definition}

Based on \cref{def:directed_linearization}, we define the \textit{sketching linearization DAG}, $\GA = (V, \EA, w)$, as a DAG defined on top of the graph $L=(V,\Ee, w)$, which includes all vertices in both the embedding and the data and all embedding edges. For a given vertex ordering of embedding vertices, $\GA$ is the linearization of $\GL$ based on the ordering with all data vertices being ordered ahead of embedding vertices.

As discussed in \cref{subsec:summery_sketchsize}, for a given sketching linearization,
the sketching accuracy of each tensor $\tsr{A}_i$ at $v_i$ is dependent on the row size of its matricization $\mat{A}_i$, which is the
weighted size of the edge set adjacent to $v_i$ containing all uncontracted edges and contracted edges also adjacent to $v_j$ with $j > i$, which is called \textit{effective sketch dimension} of $v_i$ throughout the paper.
Based on the definition, when $v\in \Ve$, $\cut_{\GA}(v)$ equals the effective sketch dimension size of $v$. When $v\in \Vd$, $\cut_{\GA}(v)$ represents the size of the sketch dimension adjacent to $v$.
We look at embeddings $\Ge$ not only satisfying the ($\epsilon, \delta$)-accurate sufficient condition in \cref{thm:subgaussian_embedding}, but also only have one output sketch dimension ($|E_1|=1$) with the output sketch size $m=\bigTheta{\Ne\log(1/\delta)/\epsilon^2}$.
For each one of these embeddings, there must exist a linearization $\GA$ such that for all $v\in \Ve$, we have \begin{equation}\label{eq:cut_bound_sufficient}
    \cut_{\GA}(v)= \bigOmega{\log(m)}.
\end{equation}

\begin{figure}[!ht]
\centering

\subfloat[$G = (V, \Ee\cup \Ed, w)$]
{\includegraphics[width=0.23\textwidth, keepaspectratio]{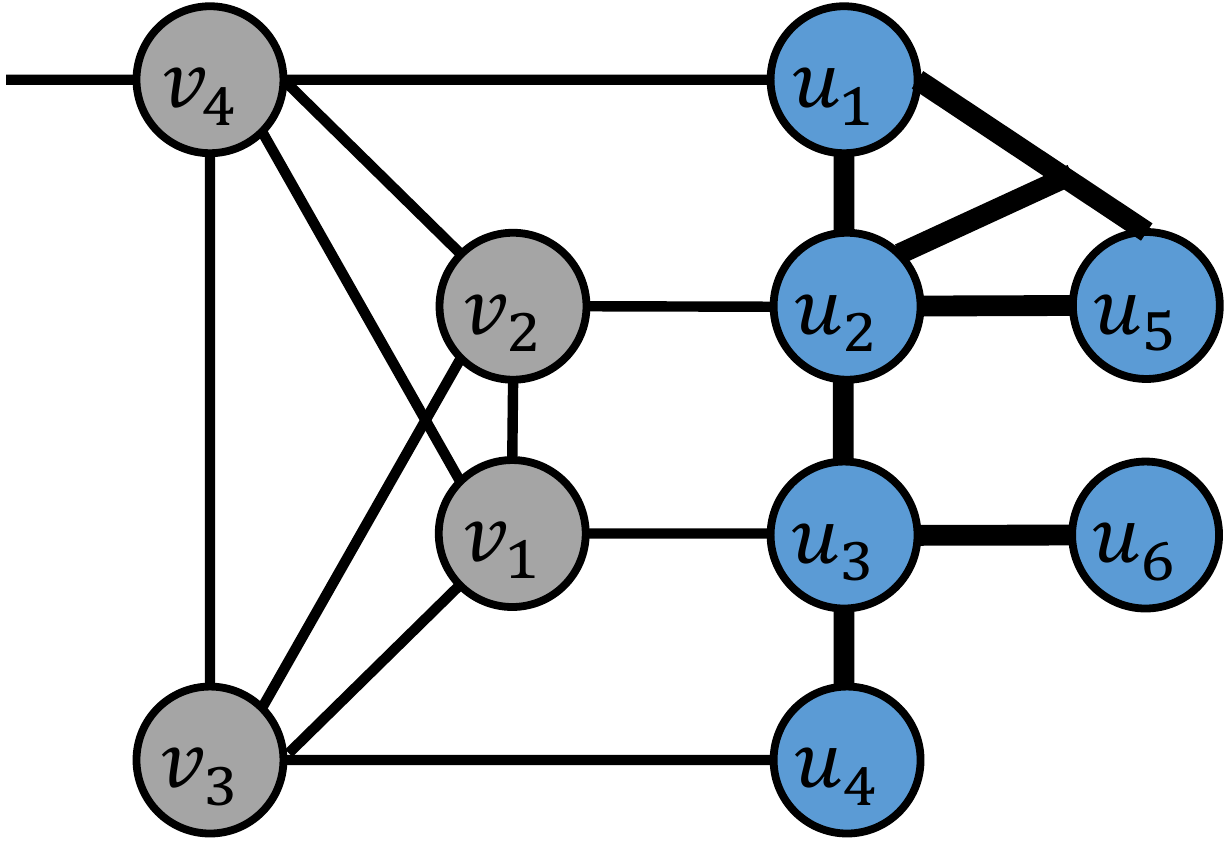}\label{subfig:G}\hspace{12mm}}
\subfloat[$\Ge = (\Ve, \Ee, w)$]
{\includegraphics[width=0.15\textwidth, keepaspectratio]{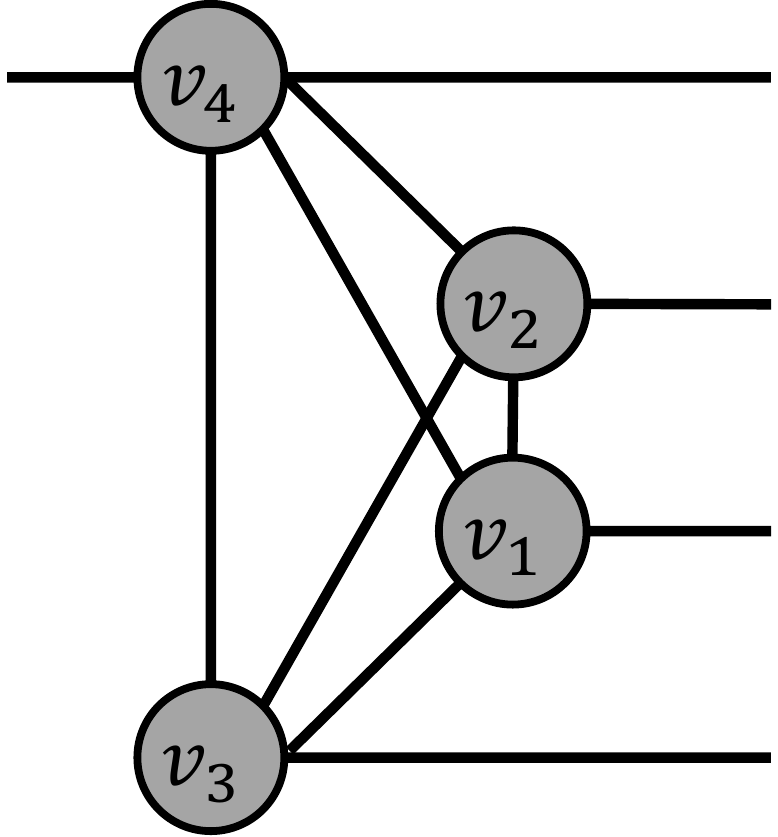}\label{subfig:\Ge}\hspace{12mm}}
\subfloat[$\Gd = (\Vd, \Ed, w)$]
{\includegraphics[width=0.13\textwidth, keepaspectratio]{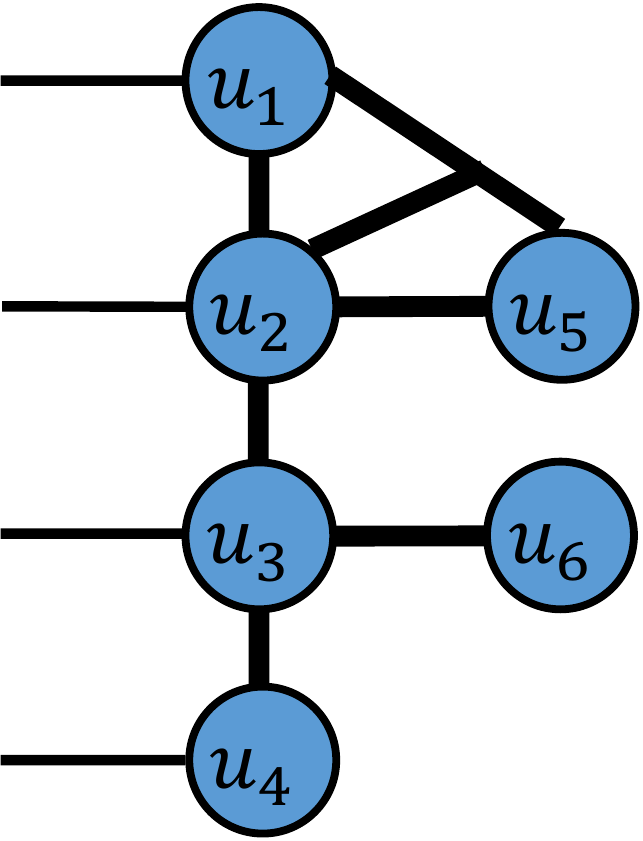}\label{subfig:\Gd}\hspace{12mm}}

\subfloat[$L = (V, \Ee, w)$]
{\includegraphics[width=0.23\textwidth, keepaspectratio]{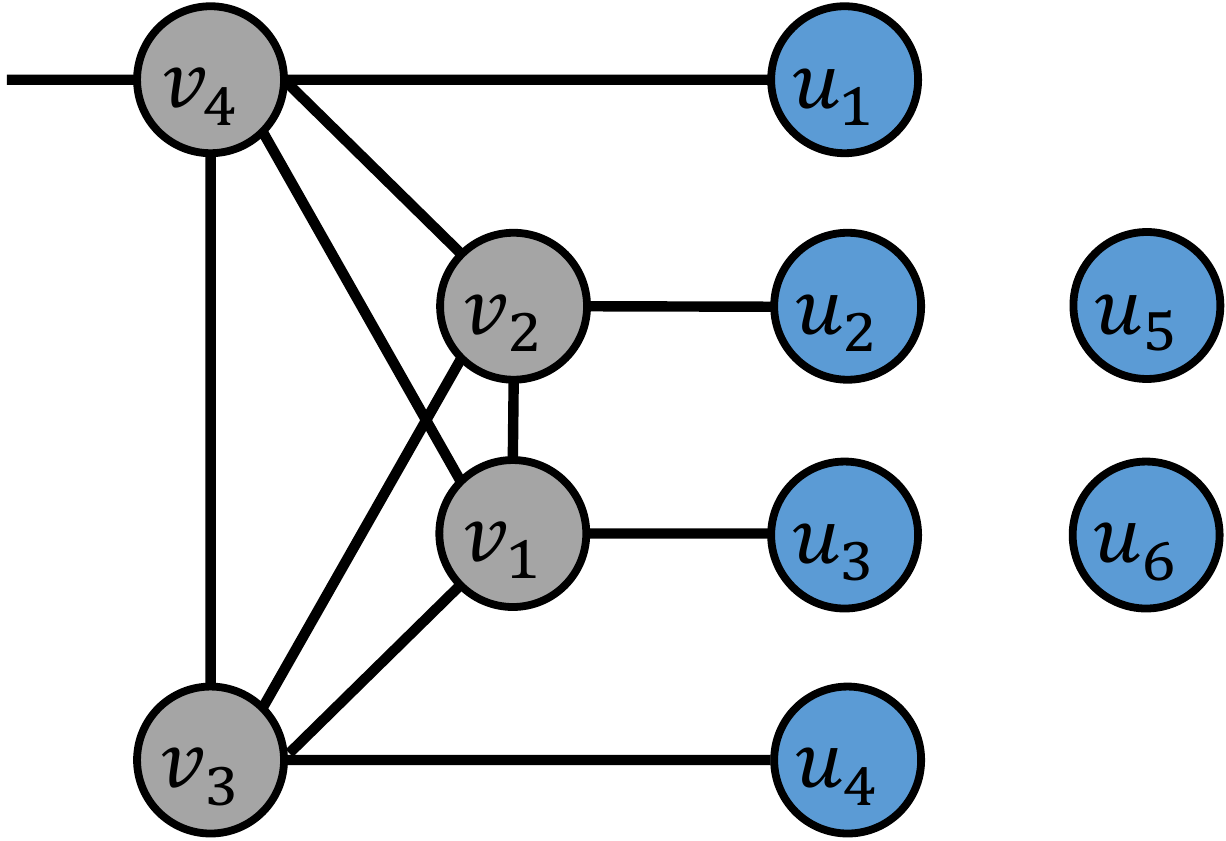}\label{subfig:L}\hspace{10mm}}
\subfloat[$R = (V, E \setminus \Ee, w)$]
{\includegraphics[width=0.2\textwidth, keepaspectratio]{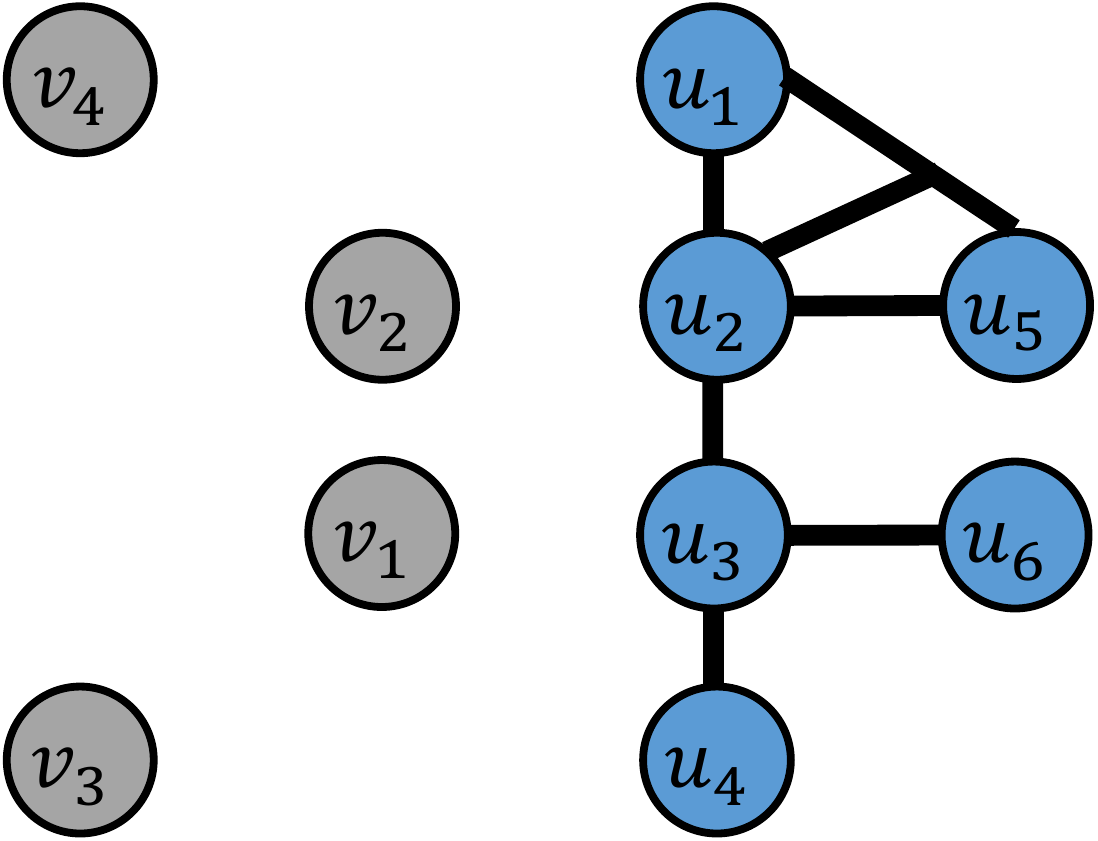}\label{subfig:R}\hspace{10mm}}
\subfloat[$\GA = (V, \EA, w)$]
{\includegraphics[width=0.23\textwidth, keepaspectratio]{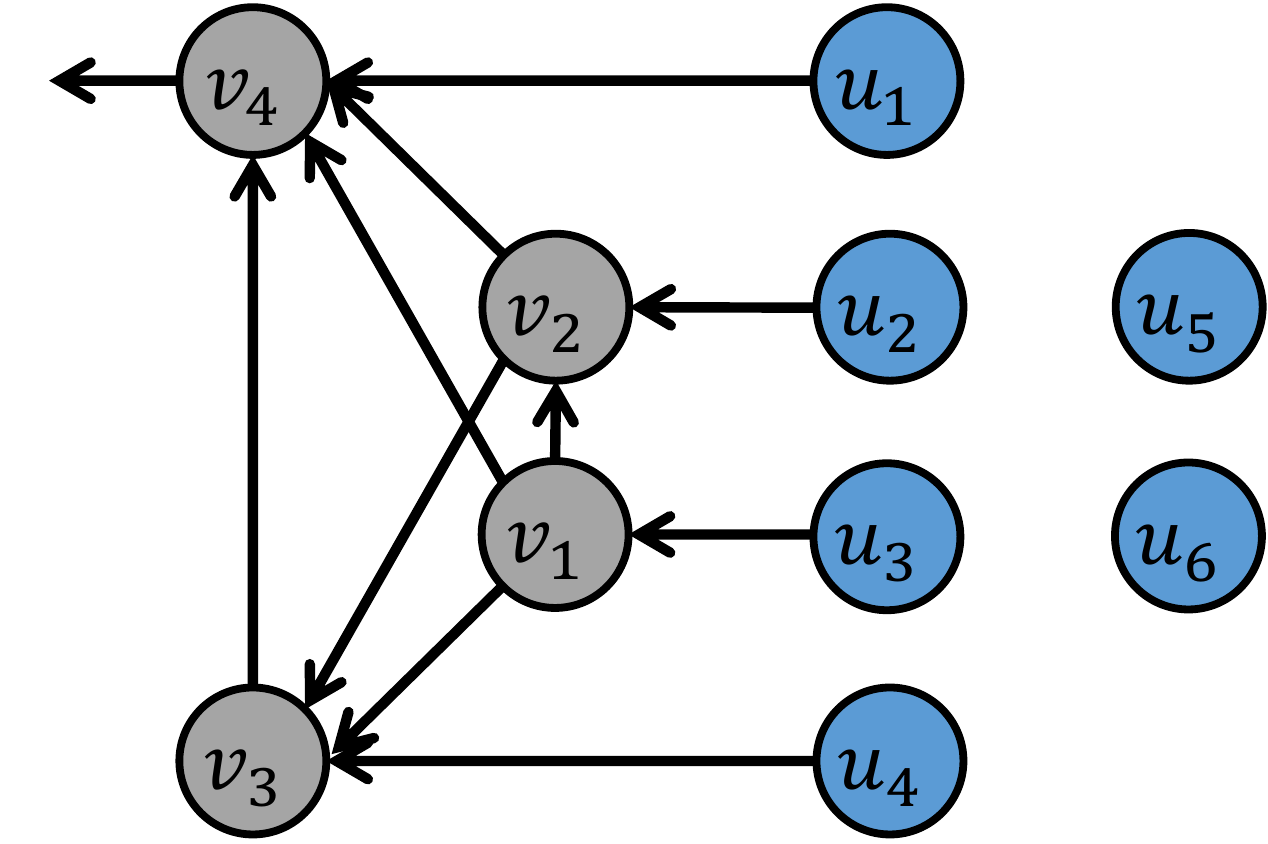}\label{subfig:D}\hspace{10mm}}

\caption{Illustration of graphs and hypergraphs used throughout the paper.}
    \label{fig:directed_linearization}
\end{figure}

\subsection{Properties of tensor network embedding}

We now derive properties that are used in the sketching computational cost analysis. In \cref{lem:graph_relation}, we show relations between cuts in the graph $\GL$ and cuts in the graph $\GA$. In \cref{lem:cost_linearization}, we show relations between costs in the graph $\GL$ and cuts in the graph $\GA$. \cref{lem:cost_linearization} along with cut lower bounds \eqref{eq:cut_bound_sufficient} is used to derive lower bounds for $\cost_{\GL}$ and $\cost_{G}$ in \cref{sec:lowerbound}.

\begin{lemma}\label{lem:graph_relation}
Consider an embedding $\Ge=(\Ve,\Ee,w)$ and a data tensor network $\Gd=(\Vd,\Ed,w)$, and
a given sketching linearization $\GA = (V,\EA,w)$, where $V = \Ve\cup \Vd$. For any $A,B\subset V$ and $A\cap B = \emptyset$, the following relations hold,
\begin{equation}\label{eq:linearization_p1}
  \cut_{\GL}(A) = \cut_{\GA}(A) +
  \cut_{\GA}(V\setminus A,A),
\end{equation}
\begin{equation}\label{eq:linearization_p2}
\cut_{\GL}(A,B) = \cut_{\GA}(A,B) + \cut_{\GA}(B,A),
\end{equation}
\begin{align}\label{eq:linearization_p3}
\cut_{\GA}(A\cup B) &= \cut_{\GA}(A) + \cut_{\GA}(B) - \cut_{\GA}(A,B) - \cut_{\GA}(B,A) \nonumber\\
& = \cut_{\GA}(A) + \cut_{\GA}(B) - \cut_{\GL}(A,B) .
\end{align}
\end{lemma}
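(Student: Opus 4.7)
The plan is to prove all three identities by unpacking the definitions of $\cut$ on undirected versus directed (hyper)graphs and classifying each edge according to its orientation in the linearization DAG. The key observation is that $\GA$ is built from $\GL$ by orienting every contracted edge toward its higher-indexed endpoint and leaving uncontracted edges as outgoing half-edges (\cref{def:directed_linearization}); hence the undirected edge multiset of $\GL$ is in bijection with the directed edge multiset of $\GA$, with edge weights preserved.

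For identity \eqref{eq:linearization_p1}, I would partition the set $E(A)$ in $\GL$ (which is $E(A, V\setminus A) \cup E(A,*)$) by how each edge is oriented in $\GA$. Each contracted edge in $E(A,V\setminus A)$ either points from $A$ to $V\setminus A$ (contributing to $\cut_{\GA}(A,V\setminus A)$) or from $V\setminus A$ to $A$ (contributing to $\cut_{\GA}(V\setminus A,A)$), while each uncontracted edge in $E(A,*)$ points outward from its incident vertex in $A$ (contributing to $\cut_{\GA}(A,*)$). Summing and using $\cut_{\GA}(A) = \cut_{\GA}(A,V\setminus A) + \cut_{\GA}(A,*)$ gives \eqref{eq:linearization_p1}. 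Identity \eqref{eq:linearization_p2} is even more direct: every undirected edge in $E(A,B)$ becomes exactly one directed edge in $\GA$, either $A\to B$ or $B\to A$, so summing weights yields $\cut_{\GL}(A,B) = \cut_{\GA}(A,B) + \cut_{\GA}(B,A)$.

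For identity \eqref{eq:linearization_p3}, the first equality is an inclusion-exclusion in the DAG: $\cut_{\GA}(A\cup B)$ counts directed edges leaving $A\cup B$ (to $V\setminus (A\cup B)$) together with uncontracted edges out of $A\cup B$, whereas $\cut_{\GA}(A) + \cut_{\GA}(B)$ also includes the directed edges $A\to B$ and $B\to A$, each counted exactly once. Subtracting $\cut_{\GA}(A,B) + \cut_{\GA}(B,A)$ removes precisely these internal edges. The second equality then follows immediately from \eqref{eq:linearization_p2}.

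The argument is essentially bookkeeping; the only mild subtlety is ensuring uncontracted edges (which are half-edges with a ``$*$'' endpoint) are handled consistently on both sides, and that each undirected edge of $\GL$ gives rise to exactly one directed edge of $\GA$ with the same weight. I anticipate no real obstacle beyond writing the case split cleanly, so the proof should be a short direct verification rather than requiring any deeper structural argument.
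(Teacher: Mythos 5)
Your proof is correct and follows essentially the same route as the paper: the paper dismisses \eqref{eq:linearization_p1} and \eqref{eq:linearization_p2} as holding ``directly from the definition'' of the linearization DAG (which your edge-orientation bookkeeping makes explicit), and proves \eqref{eq:linearization_p3} by exactly the inclusion--exclusion computation you describe, using disjointness of $A$ and $B$ and then \eqref{eq:linearization_p2} for the second equality. No gaps; your write-up is if anything slightly more detailed than the paper's.
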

\begin{proof}
\eqref{eq:linearization_p1} and \eqref{eq:linearization_p2} hold directly based on the definition of the linearization DAG. For \eqref{eq:linearization_p3}, based on \eqref{eq:ES}, we have 
\begin{align}\label{eq:graph_relation_1}
    \cut_{\GA}(A\cup B) &= \cut_{\GA}\left(A\cup B, V\setminus(A\cup B)\right) + \cut_{\GA}(A\cup B, *) \nonumber \\
    & = \cut_{\GA}\left(A, V\setminus(A\cup B)\right) + \cut_{\GA}\left( B, V\setminus(A\cup B)\right) + \cut_{\GA}(A\cup B, *) \nonumber\\
    & = \cut_{\GA}\left(A, V\setminus A\right) - \cut_{\GA}\left(A,B\right)+\cut_{\GA}\left( B, V\setminus B\right) -\cut_{\GA}\left( B, A\right) \nonumber \\ &+ \cut_{\GA}(A, *)+ \cut_{\GA}(B, *) \nonumber\\
    & = \cut_{\GA}(A) + \cut_{\GA}(B) - \cut_{\GA}(A,B) - \cut_{\GA}(B,A).
\end{align}
Note that the second and third equalities in \eqref{eq:graph_relation_1} hold since $A$ and $B$ are disjoint sets.
This finishes the proof.
\end{proof}

\begin{lemma}\label{lem:cost_linearization}
Consider any data $\Gd=(\Vd,\Ed,w)$ and embedding $\Ge=(\Ve,\Ee,w)$, and a sketching linearization $\GA = (V,\EA, w)$, where $V= \Vd\cup \Ve$. For any two subsets $A,B\in V$ such that $A\cap B = \emptyset$, the contraction of two tensors that are the contraction outputs of  $A$ and $B$ has a logarithm cost of 
\[
\cost_\GL(A, B) = \cut_{\GA}(A) + \cut_{\GA}(B) + \cut_{\GA}(V\setminus(A\cup B), A\cup B).
\]
\end{lemma}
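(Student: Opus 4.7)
The plan is to reduce everything to cuts in the sketching linearization DAG $\GA$ by applying the three identities from the preceding lemma, then perform a careful decomposition of the ``outside'' sets and cancel.

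First I would start from the definition
\[
\cost_\GL(A,B) = \cut_\GL(A) + \cut_\GL(B) - \cut_\GL(A,B),
\]
and rewrite each of the three terms on the right using \eqref{eq:linearization_p1} and \eqref{eq:linearization_p2}, obtaining
\[
\cost_\GL(A,B) = \cut_\GA(A) + \cut_\GA(V\setminus A, A) + \cut_\GA(B) + \cut_\GA(V\setminus B, B) - \cut_\GA(A,B) - \cut_\GA(B,A).
\]

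Next I would split the two ``incoming'' directed cuts using $A \cap B = \emptyset$, so that $V\setminus A = B \,\cup\, (V\setminus(A\cup B))$ (disjoint), giving
\[
\cut_\GA(V\setminus A, A) = \cut_\GA(B,A) + \cut_\GA(V\setminus(A\cup B), A),
\]
and symmetrically
\[
\cut_\GA(V\setminus B, B) = \cut_\GA(A,B) + \cut_\GA(V\setminus(A\cup B), B).
\]
Substituting these back, the $\cut_\GA(A,B)$ and $\cut_\GA(B,A)$ contributions cancel, leaving
\[
\cost_\GL(A,B) = \cut_\GA(A) + \cut_\GA(B) + \cut_\GA(V\setminus(A\cup B), A) + \cut_\GA(V\setminus(A\cup B), B).
\]

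Finally I would observe that since $A$ and $B$ are disjoint, the two remaining directed cuts combine into
\[
\cut_\GA(V\setminus(A\cup B), A) + \cut_\GA(V\setminus(A\cup B), B) = \cut_\GA(V\setminus(A\cup B), A\cup B),
\]
which yields the claimed identity. There is no real obstacle here; the only thing to watch is bookkeeping of directions in $\GA$ and the fact that uncontracted edges (those directed outward from a single vertex) are consistently handled by $\cut_\GA$ via the convention \eqref{eq:ES}, so that the decomposition $V\setminus A = B \cup (V\setminus(A\cup B))$ partitions only the contracted-edge endpoints while the $(\cdot,*)$ contributions stay inside $\cut_\GA(A)$ and $\cut_\GA(B)$.
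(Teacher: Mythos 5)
Your proof is correct and follows essentially the same route as the paper's: both start from the definition of $\cost_\GL(A,B)$, apply \eqref{eq:linearization_p1} and \eqref{eq:linearization_p2}, and use the disjoint decomposition $V\setminus A = B\cup(V\setminus(A\cup B))$ to cancel the $\cut_\GA(A,B)$ and $\cut_\GA(B,A)$ terms and merge the remainder into $\cut_\GA(V\setminus(A\cup B),A\cup B)$. Your closing remark on the handling of the $(\cdot,*)$ contributions is a correct and worthwhile clarification, but the argument is otherwise identical to the paper's.
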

\begin{proof}
Based on \cref{lem:graph_relation}, we have
\begin{equation}\label{eq:cost_ey1}
    \cut_{\GL}(A) \overset{\eqref{eq:linearization_p1}}{=} \cut_{\GA}(V\setminus A,A) +  \cut_{\GA}(A) ,
\end{equation}
\begin{equation}\label{eq:cost_ey2}
    \cut_{\GL}(B) \overset{\eqref{eq:linearization_p1}}{=} \cut_{\GA}(V\setminus B,B) +  \cut_{\GA}(B) .
\end{equation}
Based on \eqref{eq:linearization_p2}, we have 
\begin{align}\label{eq:cost_ey1ey2}
    &\cut_{\GA}(V\setminus A,A) + \cut_{\GA}(V\setminus B,B)  - \cut_{\GL}(A,B) \nonumber\\
    &= \cut_{\GA}(V\setminus A,A) + \cut_{\GA}(V\setminus B,B)  - \cut_{\GA}(A,B) - \cut_{\GA}(B,A)\nonumber\\
    &= \cut_{\GA}(V\setminus (A\cup B),A) + \cut_{\GA}(V\setminus (A\cup B),B)\nonumber\\
    &= \cut_{\GA}(V\setminus(A\cup B), A\cup B).
\end{align}
Based on \eqref{eq:cost_ey1},\eqref{eq:cost_ey2}, \eqref{eq:cost_ey1ey2}, we have 
\begin{align*}
    \cost_{\GL}(A, B) 
    &= \cut_{\GL}(A) + \cut_{\GL}(B) - \cut_{\GL}(A,B)
    \\&= \cut_{\GA}(A) + \cut_{\GA}(B) + \cut_{\GA}(V\setminus(A\cup B), A\cup B).
\end{align*}
This finishes the proof.
\end{proof}

\begin{lemma}\label{lem:elu}
Consider 
any data $\Gd=(\Vd,\Ed,w)$ and an embedding $\Ge=(\Ve,\Ee,w)$, and a sketching linearization $\GA=(V,\EA,w)$ such that the embedding is $(\epsilon,\delta)$-accurate. Then for any  $U\subseteq V$ such that there exists $v\in U$ and $\cut_{\GA}(v)\geq \log(m)$,
we have $\cut_{\GA}(U)\geq \log(m)$. 
\end{lemma}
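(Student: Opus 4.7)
My plan is a short case analysis driven by the fact that the sketching linearization orders every vertex of $\Vd$ strictly before every vertex of $\Ve$. Combined with the standing sufficient condition from \cref{thm:subgaussian_embedding} (equivalently \eqref{eq:cut_bound_sufficient}), which gives $\cut_{\GA}(v)\ge \log m$ at every embedding vertex of the linearization, this ordering lets me pinpoint a distinguished vertex of $U$ whose outgoing cut is forced to lie inside $\cut_{\GA}(U)$.

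\textbf{Case 1: $U \cap \Ve \neq \emptyset$.} Let $v^\star$ be the vertex of $U$ with the largest index in the linearization. The data-before-embedding ordering forces $v^\star \in \Ve$, so $\cut_{\GA}(v^\star) \ge \log m$. Because $v^\star$ is last in $U$, every outgoing DAG-edge from $v^\star$ targets a strictly higher-indexed vertex, which cannot belong to $U$; hence all such edges are counted in $\cut_{\GA}(U, V\setminus U)$. The uncontracted edges at $v^\star$ likewise contribute to $\cut_{\GA}(U, *)$. Summing the two contributions yields $\cut_{\GA}(U) \ge \cut_{\GA}(v^\star) \ge \log m$.

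\textbf{Case 2: $U \subseteq \Vd$.} Here I use that $\GA$ carries only embedding edges and that the embedding graph has no data--data edges. Every $\GA$-edge incident to a vertex of $U$ therefore runs to an embedding vertex (which lies in $V\setminus U$) and is oriented outward from $U$ in the DAG. Consequently $\cut_{\GA}(U) = \sum_{v \in U} \cut_{\GA}(v)$, and picking the hypothesized $v_0 \in U$ gives $\cut_{\GA}(U) \ge \cut_{\GA}(v_0) \ge \log m$.

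The main obstacle, and the reason the lemma is not a one-line monotonicity argument, is that the naive inequality $\cut_{\GA}(U) \ge \cut_{\GA}(v)$ fails in general: outgoing DAG-edges from $v$ may be absorbed by other vertices of $U$ and disappear from the boundary of $U$. The key observation that rescues the argument is that choosing the maximum-index vertex of $U$ sidesteps this loss entirely, since such a vertex has no outgoing edge into $U$ by DAG-ordering. The $(\epsilon,\delta)$-accuracy hypothesis itself enters the proof only indirectly, through the sufficient condition that underwrites $\cut_{\GA}(v^\star) \ge \log m$ in Case 1.
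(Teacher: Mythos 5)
Your proof is correct and takes essentially the same route as the paper's: additivity of the cut when $U\subseteq \Vd$, and, when $U\cap \Ve\neq\emptyset$, the identification of a vertex of $U$ with no outgoing $\GA$-edge into $U$ (necessarily an embedding vertex, whose cut is at least $\log m$ by the sufficient condition), so that its entire cut survives into $\cut_{\GA}(U)$. The only cosmetic difference is that you take the maximum-index vertex of $U$ directly, whereas the paper takes a sink of $U\cap\Ve$ and recombines via the cut identity \eqref{eq:linearization_p3}; the substance is identical.
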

\begin{proof}
When $U$ is a subset of the data vertices, $U\subseteq \Vd$, this holds directly since 
\[
\cut_{\GA}(U) = \sum_{u\in U}\cut_{\GA}(u) \geq 
\cut_{\GA}(v)\geq \log(m). 
\]
Next we consider the case where $U\cap \Ve \neq \emptyset$. Let $A = U\cap \Ve$ and $B = U\cap \Vd$. 
Based on the definition of DAG, there is no directed cycle in the subgraph $\GA[A]$.
Therefore, there exists one vertex $s\in A$, such that $\cut_{\GA}(s, A\setminus \{s\}) = 0$.
 Based on \cref{lem:graph_relation}, we have 
\begin{align*}
    \cut_{\GA}(A) &\overset{\eqref{eq:linearization_p3}}{=}
\cut_{\GA}(s) + \cut_{\GA}(A\setminus \{s\})
- \cut_{\GA}(s,A\setminus \{s\}) - \cut_{\GA}(A\setminus \{s\},s) \\
 &\geq 
 \cut_{\GA}(s) - \cut_{\GA}(s,A\setminus \{s\}) \\
&= \cut_{\GA}(s) \overset{\eqref{eq:cut_bound_sufficient}}{\geq} \log(m),
\end{align*}
In addition, we have $\cut_{\GA}(A,B) = 0$ since $A\subseteq \Ve$ and $B\subseteq \Vd$. Thus we have 
\begin{align*}
\cut_{\GA}(U) = 
\cut_{\GA}(A\cup B) &\overset{\eqref{eq:linearization_p3}}{=} \cut_{\GA}(A) + \cut_{\GA}(B) - \cut_{\GA}(A,B) - \cut_{\GA}(B,A) \\
&= \cut_{\GA}(A) + \cut_{\GA}(B) - \cut_{\GA}(B,A) \\
&\geq \cut_{\GA}(A)  \geq \log(m).
\end{align*}
This finishes the proof.
\end{proof}

\section{Computationally-efficient sketching algorithm}\label{sec:alg_detail}

In this section, we introduce the detail of the computationally-efficient sketching algorithm in \cref{alg:contract_sketch}. Consider a given data tensor network $\Gd = (\Vd, \Ed, w)$ and a given data contraction tree, $T_0$. Also let $\Nd = |\Vd|$, and let $\bar{E}\subseteq \Ed$ denote the set of edges to be sketched, and $N = |\bar{E}|$. 
Below we let $\bar{E} = \{e_1,e_2,\ldots, e_N\}$, and let each $e_i$ has weight $\log(s_i)>\log(m)$.
Based on the definition we have $N \leq \Nd$.
Let one  contraction path representing $T_0$ be expressed as a sequence of $\Nd-1$ contractions, 
\begin{equation}
    \left\{(U_1,V_1), \ldots, (U_{\Nd-1}, V_{\Nd-1})\right\}.
\end{equation}
Above we use $(U_i,V_i)$ to represent the contraction of two intermediate tensors represented by two subset of vertices $U_i,V_i\subset \Vd$. Below we let 
\begin{align}\label{eq:label}
    &a_i = \exp\left(\cut_{\GR}(U_i)-\cut_{\GR}(U_i,V_i)\right), \nonumber \\ 
    &c_i = \exp\left(\cut_{\GR}(V_i)-\cut_{\GR}(U_i,V_i)\right), \nonumber \\
    &d_i = \exp\left(\cut_{\GR}(U_i\cup V_i)\right)/(a_ic_i), \nonumber \\ 
    &b_i = \exp\left(\cut_{\GR}(U_i,V_i)\right)/d_i.
\end{align}
Note that $d_i$ represents the size of uncontracted dimensions adjacent to both $U_i$ and $V_i$, and $b_i$ represents the size of contracted dimensions between $U_i$ and $V_i$. We also have $\cost_R(U_i, V_i) = \log(a_ib_ic_id_i)$, and  $\cut_R(U_i\cup V_i)= \log(a_ic_id_i)$. $a_i,b_i,c_i,d_i$ are visualized in \cref{fig:sketch_cases}.

\subsection{Sketching with the embedding containing a binary tree of small tensor networks}\label{subsec:binary}

We now present the details of applying the embedding containing a binary tree of small tensor networks. 
In \cref{subsec:summary_efficient_ebmedding}, we define $\st{S}$ as the set containing contractions $(U_i,V_i)$ such that both $U_i$ and $V_i$ are adjacent to edges in $\bar{E}$. 
For each contraction $i\in \st{S}$,
one small embedding tensor network (denoted as $Z_i$) is applied to the contraction. 
Let $\hat{U}_i,\hat{V}_i$ denote 
   the sketched $U_i$ and $V_i$ formed in previous contractions in the sketching contraction tree $T_B$, such that $\hat{U}_i\cap \Vd = U_i$ and $\hat{V}_i\cap \Vd = V_i$.
The structure of $Z_i$ is determined so that the asymptotic cost to sketch $(\hat{U}_i,\hat{V}_i)$ is minimized, under the constraint that $Z_i$ is $(\epsilon/\sqrt{N},\delta)$-accurate and only has one output sketch dimension.

The structure of $Z_i$ is illustrated in \cref{fig:sketch_cases}. For the case $a_i\leq c_i$, the structure is shown in  \cref{subfig:sketch_case1}, and sketching is performed via the contraction sequence of 
contracting $\hat{U}_i$ and $v_1$ first, then with $\hat{V}_i$, and then with $v_2$ (also denoted as a contraction sequence of $(((\hat{U}_i, v_1), \hat{V}_i), v_2)$). 
For the case $a_i> c_i$, the structure of $Z_i$ is shown in \cref{subfig:sketch_case2}, and the sketching is performed via the contraction sequence of $(((\hat{V}_i, v_2), \hat{U}_i), v_1)$. With this algorithm, sketching $(\hat{U}_i,\hat{V}_i)$ yields a computational cost proportional to
\begin{equation}\label{eq:yi}
   y_i = a_ib_ic_id_im^2 + m^2d_i\sqrt{a_ib_ic_im}\cdot\min(\sqrt{a_i},\sqrt{c_i}).
\end{equation}
We show in \cref{lem:lowerbound_gen} that the asymptotic cost lower bound to sketch $(U_i, V_i)$ is also $\Omega(y_i)$.

\begin{figure}[!ht]
\centering

\subfloat[]{\includegraphics[width=0.28\textwidth, keepaspectratio]{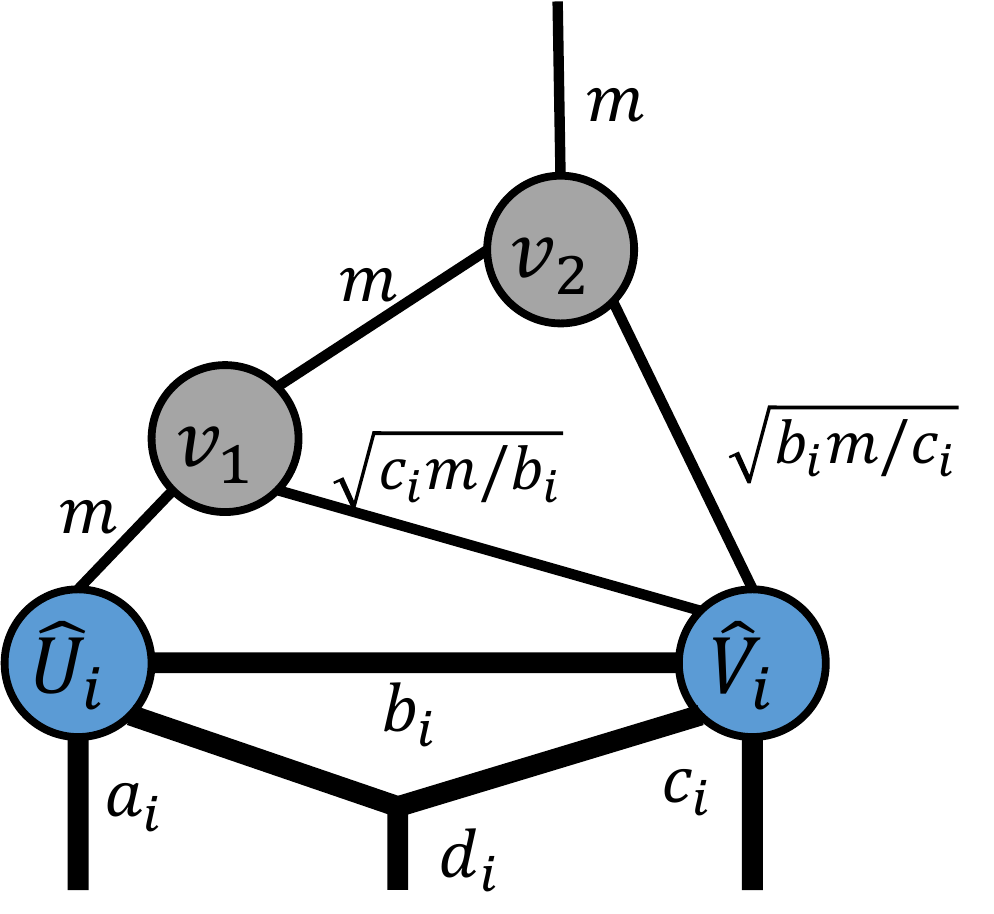}\label{subfig:sketch_case1}}
\subfloat[]{\includegraphics[width=0.28\textwidth, keepaspectratio]{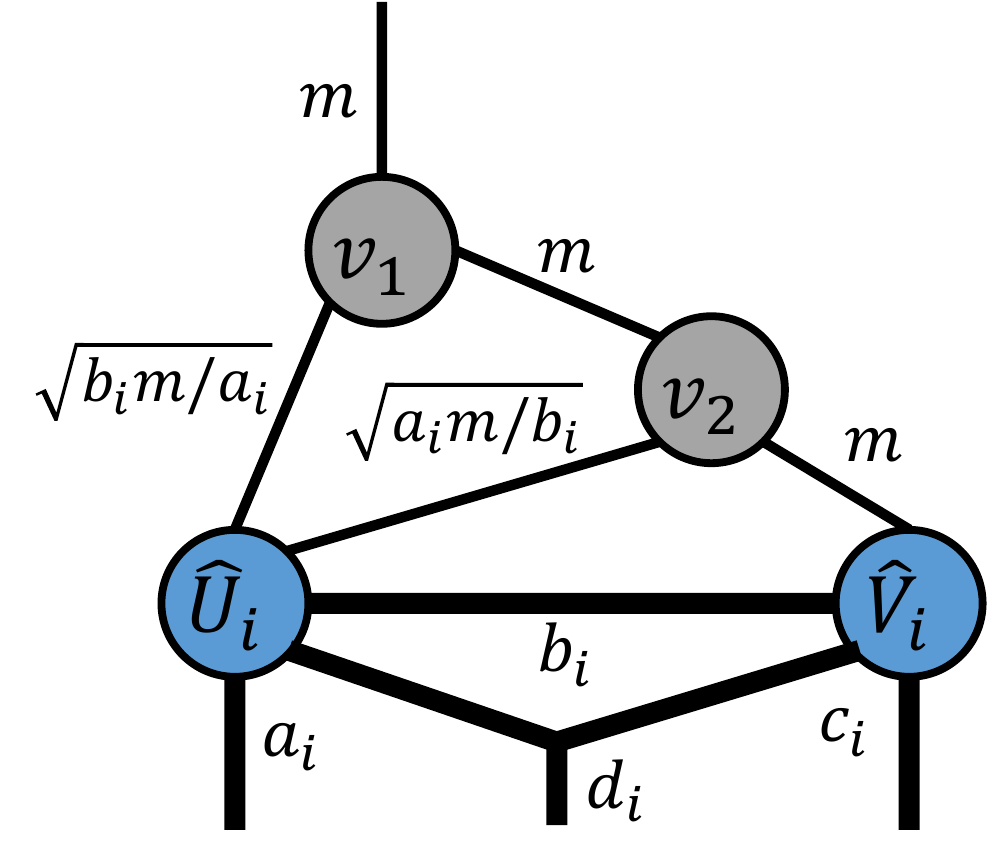}\label{subfig:sketch_case2}}

\caption{Illustration of the small network in the binary tree structured embedding. For each edge $e$, we show the dimension size of that edge (exponential in $w(e)$).
}
\label{fig:sketch_cases}
\end{figure}

\subsection{Computational cost analysis}\label{subsec:appendix_costanalysis}

We provide the computational cost analysis of \cref{alg:contract_sketch} in this section.

\begin{theorem}\label{thm:complexity}
  \cref{alg:contract_sketch} has an asymptotic computational cost of 
\begin{equation}\label{eq:cost}
  \Theta\left(
\sum_{j=1}^{N} \stt{j} + 
\sum_{i\in \st{S}}y_i + 
\sum_{i\in \st{I}}z_i
\right),  
\end{equation}
where $\stt{j}$ is the optimal asymptotic cost to sketch the sub tensor network $X(e_j)$ (defined in \cref{tab:notations}) with a matrix in the Kronecker product embedding, 
$y_i$ is expressed in \eqref{eq:yi}, 
and
\begin{equation}\label{eq:zi}
   z_i = a_ib_ic_id_i\cdot \min\left(\exp(\cut_{\GA}(U_i\cup V_i)), m\right),
\end{equation}
where $a_i,b_i,c_i,d_i$ are expressed in \eqref{eq:label}.
\end{theorem}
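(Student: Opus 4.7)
The plan is to decompose the total work of \cref{alg:contract_sketch} into three disjoint blocks matching the three terms of \eqref{eq:cost}, and to account for each by tracking the state of every intermediate tensor under the processing order dictated by $T_0$. I would organize the argument around a single inductive invariant that controls the number of sketch dimensions on every partially built intermediate, and then read off the cost of each individual contraction from that invariant.

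First I would handle the Kronecker sketching loop in lines 2--6. For each $e_j\in\bar{E}$ the algorithm must contract $\stX{j}$ while applying the associated Gaussian matrix somewhere inside. When $\stD{j}\neq\emptyset$ the algorithm picks the insertion point $\stk{j}\in\stD{j}$ by exhaustive search, so the resulting cost is the minimum over all valid placements; by the definition of $\stt{j}$ in the theorem statement this minimum is exactly $\stt{j}$. When $\stD{j}=\emptyset$ we have $\stX{j}=v_j$ and the direct matrix--tensor product again costs $\stt{j}$. Summing over the $N$ outer iterations contributes $\sum_{j=1}^N \stt{j}$ to the total.

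Next I would state and prove the invariant: whenever a subset $W\subseteq\Vd$ with $\bar{E}(W)\neq\emptyset$ has been fully processed (every contraction of $T_0$ lying inside $W$ has been executed), the current sketched tensor $\hat{W}$ carries exactly one sketch dimension of size $m$. The base case ($|W|=1$, with the unique $v\in W$ adjacent to some $e_j$) is established by the Kronecker loop. The inductive step splits on the class of the last contraction inside $W$: a $\stD{j}$ contraction merges an $\bar{E}$-bearing side carrying one sketch dimension with an $\bar{E}$-free side carrying none; an $\st{S}$ contraction merges two sketch dimensions into one via $Z_i$, whose single output dimension has size $m$ because $Z_i\in\mathcal{G}^{(\epsilon/\sqrt{N},\delta)}$; and an $\st{I}$ case-(b) contraction merges a one-sketch-dim side with a zero-sketch-dim side. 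In each case the sketched output carries exactly one sketch dim of size $m$.

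With the invariant in hand the remaining two cost blocks are routine. For each $i\in\st{S}$, assume without loss of generality $a_i\le c_i$; the invariant gives $\hat{U}_i$ and $\hat{V}_i$ total sizes $m\,a_ib_id_i$ and $m\,b_ic_id_i$ respectively, so the contraction order $(((\hat{U}_i,v_1),\hat{V}_i),v_2)$ from \cref{subsec:binary} spends $\Theta(a_ib_ic_id_im^2)$ on its final main step (where the two sketch dimensions are merged) and $\Theta(m^2 d_i\sqrt{a_ib_ic_im}\cdot\sqrt{a_i})$ on the two auxiliary contractions with $v_1,v_2$, whose sizes are chosen in \cref{subsec:binary} to balance the work; summing matches $y_i$ in \eqref{eq:yi}, and the mirror case $a_i>c_i$ is symmetric via $\min(\sqrt{a_i},\sqrt{c_i})$. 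For each $i\in\st{I}$ no new sketch tensor is introduced, so the cost is the raw data contraction cost $a_ib_ic_id_i$ times the size of the sketch dimensions still present on the two inputs. In case (a) there are none and $\cut_{\GA}(U_i\cup V_i)=0$, so the $\min$ in $z_i$ equals $1$. In case (b) the invariant forces the $\bar{E}$-bearing side to carry a single sketch dimension of size $m$, giving cost $a_ib_ic_id_im$; and since $|\bar{E}(U_i\cup V_i)|\ge 2$ with each $s_j>m$ we have $\cut_{\GA}(U_i\cup V_i)\ge 2\log m$, so the $\min$ in $z_i$ equals $m$. Either way the cost matches $z_i$, and summing the three blocks yields \eqref{eq:cost}.

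The main obstacle is the invariant. One has to rule out orderings of $T_0$ in which a group quietly accumulates multiple uncombined sketch dimensions before an $\st{I}$ case-(b) contraction, which would inflate its cost well past $z_i$. This is what forces the casework above on the last contraction inside a group, and what makes it essential that $\st{S}$ is the only class that merges two $\bar{E}$-bearing sides, so that the use of $Z_i$ always collapses the two sketch dimensions into one.
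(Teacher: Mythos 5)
Your proof is correct and takes essentially the same route as the paper's: the same decomposition into the three cost blocks, with the same two-case analysis of the $\st{I}$ contractions (cost $\Theta(a_ib_ic_id_i)$ when neither side touches $\bar{E}$, cost $\Theta(a_ib_ic_id_i\cdot m)$ otherwise). The paper's own proof is much terser—it declares the first two terms ``easily verified'' and only sketches the $\st{I}$ cases—so your explicit one-sketch-dimension invariant is precisely the fact the paper leaves implicit when it asserts that the Kronecker and binary-tree embeddings reduce all adjacent $\bar{E}$-edges to a single dimension of size $m$.
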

\begin{proof}
The terms $\sum_{j\in N} \stt{j} + 
\sum_{i\in \st{S}}y_i$ can be easily verified based on the analysis in \cref{subsec:summary_efficient_ebmedding} and \cref{subsec:binary}. 

Consider the contractions in $\st{I}$, which  include $(U_i,V_i)$ such that both $U_i$ and $V_i$ are not adjacent to $\bar{E}$, and contractions where $U_i$ or $V_i$ is adjacent to at least two edges in $\bar{E}$. 
The first type of contractions in $\st{I}$ would have a cost of $\Theta(a_ib_ic_id_i)$, and not be affected by previous sketching steps.
For the second type, application of the Kronecker product and binary tree embeddings to $U_i$ and $V_i$ would reduce all adjacent edges in $\bar{E}$ to a single dimension of size $m$. Consequently, the contraction cost 
would be $\Theta(a_ib_ic_id_i\cdot m)$. Summarizing both cases prove the cost in \eqref{eq:zi}.
The cost in \eqref{eq:cost} follows from combining the terms $\sum_{j\in N} \stt{j} + 
\sum_{i\in \st{S}}y_i$ and $\sum_{i\in \st{I}}z_i$.
\end{proof}

For the special case where each vertex in the data tensor network is adjacent to an edge to be sketched, we have $\stD{j}=  \emptyset$ for all $j\in [N]$ and $\st{I} = \emptyset$, thus all the contractions are in the set $\st{S}$. 
Therefore, sketching each $e_j$ has an asymptotic cost of
$\Theta(\stt{j}) = \Theta(\exp(\cut_G(v_j))\cdot m)$, where $v_j$ is the vertex in the data graph adjacent to $e_j$, and
\cref{thm:complexity} implies that the sketching cost would be 
\begin{align}\label{eq:cost_uniform}
  \Theta\left(
\sum_{j=1}^{N} \stt{j} + 
\sum_{i\in \st{S}}y_i + 
\sum_{i\in \st{I}}z_i
\right) 
 =   \Theta\left(
\sum_{j=1}^{N} \exp(\cut_G(v_j))\cdot m + 
\sum_{j=1}^{N-1}y_i
\right) .
\end{align}
As we will show in \cref{thm:lowerbound_uniform}, this cost matches the asymptotic cost lower bound, when the embedding satisfies the $(\epsilon,\delta)$-accurate sufficient condition and only has one output sketch dimension.

When
the data has a Kronecker product structure, we have $\cut_G(v_j) = w(e_j) = \log(s_j)$, 
and $a_i,b_i,c_i,d_i=1$ for all $i\in \{1,\ldots, N-1\}$ for all contraction trees. Therefore,
\[
y_i =  a_ib_ic_id_im^2 + m^2d_i\sqrt{a_ib_ic_im}\cdot\min(\sqrt{a_i},\sqrt{c_i}) = m^2 + m^{2.5},
\]
and the sketching cost is
\begin{align}\label{eq:sketchkronecker_general}
\Theta\left(
\sum_{j= 1}^{N} s_jm + 
Nm^{2.5}
\right).
\end{align}
As we will show in \cref{sec:appendix_tree}, sketching with tree tensor network embeddings yield an asymptotic cost of $\Theta\left(
\sum_{j= 1}^{N} s_jm + 
Nm^{3}
\right).$ Therefore, \cref{alg:contract_sketch} is more efficient to sketch Kronecker product input data.

\section{Lower bound analysis}\label{sec:lowerbound}

In this section, we discuss the \textit{asymptotic} computational lower bound for sketching with embeddings satisfying the $(\epsilon,\delta)$-accurate sufficient condition and only have one output sketch edge. In \cref{subsec:sketch_uniform}, we discuss the case where the data has uniform sketch dimensions. In this case, each vertex in the data tensor network is adjacent to an edge to be sketched. In \cref{subsec:sketch_general}, we discuss the sketching computational lower bound for a more general case, when the data tensor network can have arbitrary graph structure, and vertices not adjacent to sketch edges are allowed. For both cases, we assume that 
the size of each dimension to be sketched 
is greater than the sketch size.

\subsection{Sketching data with uniform sketch dimensions}\label{subsec:sketch_uniform}

We now discuss the sketching asymptotic cost lower bound when the data $\Gd=(\Vd,\Ed,w)$  has uniform sketch dimensions, where each $v\in \Vd$ is adjacent to an edge to be sketched with size lower bounded by the target sketch size, $m$. 
We have $N =|\bar{E}| = |\Vd|$, and we let the size of each $e_i\in \bar{E}$ be denoted $s_i>m$.
We let $V=\Ve\cup \Vd$ denote the set of all vertices in both the data and the embedding.
Below, we show the main theorem using lemmas and notations introduced in \cref{sec:linearization}.

\begin{theorem}\label{thm:lowerbound_uniform}
  For any embedding $\Ge$ satisfying the $(\epsilon,\delta)$-accurate sufficient condition  and only has one output sketch dimension, and any contraction tree $T_B$ of $(\Gd,\Ge)$ constrained on the data contraction tree $T_0$ expressed in \eqref{eq:exp_T0}, the sketching asymptotic cost is lower bounded by 
\begin{equation}\label{eq:lowerbound_uniform}
  \Omega\left(
\sum_{j=1}^{N} \exp(\cut_G(v_j))\cdot m + 
\sum_{j=1}^{N-1}y_i
\right),  
\end{equation}
 where $m=\Omega(N\log(1/\delta)/\epsilon^2)$ represents the embedding sketch size, $v_j$ is the vertex in $\Vd$ adjacent to $e_j$, and $\exp(\cut_G(v_j))$ denotes the size of the tensor at $v_j$, and
$y_i$ is expressed in \eqref{eq:yi}. \end{theorem}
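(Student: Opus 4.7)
The plan is to lower bound the cost of any feasible $T_B$ by charging its contractions into two groups that together yield the two summands of \eqref{eq:lowerbound_uniform}. In the uniform case $|\Vd|=N$ forces $\st{I}=\emptyset$, $X(e_j)=v_j$, and $\st{S}$ to be the full set of $N-1$ contractions of $T_0$. Fix a linearization $\GA$ realizing the $(\epsilon,\delta)$-accurate sufficient condition, so that $\cut_{\GA}(u)\geq \log m$ for every embedding vertex $u\in \Ve$ by \eqref{eq:cut_bound_sufficient}, and order all data vertices before all embedding vertices; the outgoing edge $e_j$ of weight $\log s_j\geq \log m$ then gives $\cut_{\GA}(v_j)\geq \log m$ as well, and in particular $\cut_{\GA}(v_j)=\cut_{\GL}(v_j)$.

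Group (i): for each $v_j$, locate the unique contraction $(\{v_j\},W_j)\in T_B$ that first touches $v_j$. Because $v_j$ has not been touched before, one child of this contraction is exactly $\{v_j\}$ and $W_j$ is disjoint from $\{v_j\}$. Applying \cref{lem:cost_linearization} to $\GL$ (and dropping the nonnegative third term), together with $\cost_{\GR}(\{v_j\},W_j) = \cut_{\GR}(v_j)+\cut_{\GR}(W_j)-\cut_{\GR}(v_j,W_j)\geq \cut_{\GR}(v_j)$, I obtain
\[
\cost_G(\{v_j\},W_j) \;\geq\; \cut_{\GA}(v_j) + \cut_{\GA}(W_j) + \cut_{\GR}(v_j) \;=\; \cut_G(v_j) + \cut_{\GA}(W_j).
\]
By \cref{lem:elu} applied to $W_j$ (which contains either an embedding vertex with $\cut_{\GA}\geq \log m$ or a data vertex $v_k$ with $\cut_{\GA}(v_k)\geq \log m$), we have $\cut_{\GA}(W_j)\geq \log m$, and so this contraction costs $\Omega(\exp(\cut_G(v_j))\cdot m)$. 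Summing over $j$ produces the first summand.

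Group (ii): by \cref{def:contrained_contraction_tree}, each $(U_i,V_i)\in \st{S}$ has a matching contraction $(\hat U_i,\hat V_i)\in T_B$ with $\hat U_i\cap \Vd = U_i$ and $\hat V_i\cap \Vd = V_i$. The cost attributable to this contraction and the sketching sub-network around it is $\Omega(y_i)$; this is the content of \cref{lem:lowerbound_gen} (referenced in the main text immediately after \eqref{eq:yi}), which combines \cref{lem:cost_linearization} with \cref{lem:elu} to force the effective sketch dimensions on both sides of $(\hat U_i,\hat V_i)$ to be $\Omega(m)$ and then minimizes the resulting cost over all feasible topologies of the sketching sub-network $Z_i$ between them, exactly recovering \eqref{eq:yi}. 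Because $(\hat U_i,\hat V_i)$ merges two already-nonempty data sub-collections, it strictly post-dates every first-touch contraction of the vertices in $U_i\cup V_i$, so group (i) and group (ii) contractions are disjoint and their costs add to \eqref{eq:lowerbound_uniform}.

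The main obstacle is the case $U_i=\{v_j\}$, in which $v_j$'s first-touch contraction can coincide with the group-(ii) contraction for $(U_i,V_i)$; here one has to show the single contraction's cost dominates both the per-vertex charge $\Omega(\exp(\cut_G(v_j))\cdot m)$ and the per-contraction charge $\Omega(y_i)$ up to a constant, which follows by a direct size comparison using $s_j\geq m$ and the structure of $y_i$. The rest of the group (ii) bound, namely the $\Omega(y_i)$ lower bound on the sketching sub-network around $(\hat U_i,\hat V_i)$ for arbitrary embedding topology, is isolated in \cref{lem:lowerbound_gen}; the group (i) bound, by contrast, is a direct application of \cref{lem:cost_linearization} and \cref{lem:elu}.
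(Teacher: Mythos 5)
Your proof is correct and follows essentially the same route as the paper: the $\sum_i y_i$ term is delegated to \cref{lem:lowerbound_gen} exactly as in the paper, and your group-(i) argument is precisely the content of \cref{lem:sketch_one_intermediate_lower} (specialized to $U=\{v_j\}$) combined with $\cut_{\GA}(v_j)=\cut_{\GL}(v_j)$ for data vertices. The only divergence is your worry about disjointness of the two charge groups, which the paper sidesteps (and which is anyway harmless, since each contraction of $T_B$ absorbs at most a constant number of charges, so the two separate lower bounds combine into $\Omega$ of their sum).
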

We present the proof of \cref{thm:lowerbound_uniform} at the end of \cref{subsec:sketch_uniform}. Note that the first term in \eqref{eq:lowerbound_uniform}, $\sum_{v\in \Vd} \exp\left(\cut_G(v)\right)\cdot m$, is a term independent of the data contraction tree, while the second term is dependent of the data contraction tree. 

\begin{proof}[Proof of \cref{thm:optimal}]
The asymptotic cost of of \cref{alg:contract_sketch} in \eqref{eq:cost_uniform} matches the lower bound shown in \cref{thm:lowerbound_uniform}, thus proving the statement.
\end{proof}

\cref{thm:lowerbound_uniform} also yields an asymptotic lower bound for sketching data with a Kronecker product structure. We state the results below.

\begin{corollary}\label{cor:kronecker}
Consider an input data $\Gd$ representing a vector with a Kronecker product structure and each $v_j$ for $j\in [N]$ is adjacent to an edge to be sketched with size $s_j$.
For any embedding $\Ge$ satisfying the $(\epsilon,\delta)$-accurate sufficient condition with only one output sketch dimension
and any contraction tree $T_B$ of $(\Gd,\Ge)$, the asymptotic cost must  be lower bounded by 
\[\bigOmega{\sum_{j= 1}^{N} s_jm + Nm^{2.5}},
\]
where $m=\Omega(N\log(1/\delta)/\epsilon^2)$.
\end{corollary}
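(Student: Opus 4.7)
The plan is to deduce the corollary directly from \cref{thm:lowerbound_uniform} by specializing all quantities in \eqref{eq:lowerbound_uniform} to the Kronecker product structure. Since a Kronecker product vector $x=x_1\otimes\cdots\otimes x_N$ is represented by a data tensor network $\Gd$ consisting of $N$ isolated vertices $v_1,\ldots,v_N$, each with the single dangling edge $e_j\in\bar E$ of weight $\log s_j$, the hypotheses of \cref{thm:lowerbound_uniform} are immediately satisfied: every data vertex is adjacent to an edge in $\bar E$. Also, any contraction tree $T_B$ of $(\Gd,\Ge)$ restricts to some data contraction tree $T_0$ on $\Vd$, so a lower bound that is uniform in the choice of $T_0$ will give the ``for any $T_B$'' conclusion of the corollary.

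First, I would evaluate the first term $\sum_{j=1}^N \exp(\cut_G(v_j))\cdot m$. Because $v_j$ is adjacent only to $e_j$, we have $\cut_G(v_j)=w(e_j)=\log s_j$, and hence $\exp(\cut_G(v_j))\cdot m=s_j m$. Summing over $j$ yields $\sum_{j=1}^N s_j m$, the first term claimed in the corollary.

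Next, I would evaluate $\sum_{i=1}^{N-1}y_i$ using the definitions in \eqref{eq:label} and \eqref{eq:yi}. In the Kronecker product case, $\GR=(V,E\setminus\Ee,w)$ contains no edges between data vertices (there are no contracted edges among the $v_j$'s and no shared uncontracted dimensions other than the sketch edges, which are excluded from $\GR$). Hence for every contraction $(U_i,V_i)$ in $T_0$, $\cut_{\GR}(U_i)=\cut_{\GR}(V_i)=\cut_{\GR}(U_i,V_i)=\cut_{\GR}(U_i\cup V_i)=0$, giving $a_i=b_i=c_i=d_i=1$. Substituting into \eqref{eq:yi}, $y_i=m^2+m^2\sqrt{m}=\Theta(m^{2.5})$. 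Summing over the $N-1$ contractions gives $\Theta(Nm^{2.5})$.

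Adding the two contributions produces the bound $\Omega\bigl(\sum_{j=1}^N s_j m + N m^{2.5}\bigr)$, which is independent of the particular choice of $T_0$ inside $T_B$ and therefore yields a lower bound for every contraction tree $T_B$ of $(\Gd,\Ge)$. There is no real obstacle here: the only thing to be careful about is that the lower bound of \cref{thm:lowerbound_uniform} is stated conditionally on a given $T_0$, so the final step should remark that the bound is $T_0$-independent for Kronecker product data, which is immediate from $a_i=b_i=c_i=d_i=1$.
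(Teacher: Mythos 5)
Your proposal is correct and follows exactly the route the paper intends: specialize \cref{thm:lowerbound_uniform} to the Kronecker product network, observe $\cut_G(v_j)=\log s_j$ and $a_i=b_i=c_i=d_i=1$ for every contraction tree (as the paper itself notes in \cref{subsec:appendix_costanalysis}), so that $y_i=\Theta(m^{2.5})$ and the bound is independent of the choice of $T_0$. Your explicit remark that any $T_B$ restricts to some $T_0$ and that the resulting bound is $T_0$-independent is a worthwhile clarification that the paper leaves implicit.
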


Below, we present some lemmas needed to prove \cref{thm:lowerbound_uniform}. 

\begin{lemma}\label{lem:w_cut}
Consider 
an $(\epsilon,\delta)$-accurate embedding $\Ge = (\Ve, \Ee, w)$ with a sketching linearization $\GA=(V,\EA,w)$. Then for any subset of the embedding and data graph vertex set, $W\subseteq V$, we have  $\cut_{\GA}(W)\geq \log(m)$.
\end{lemma}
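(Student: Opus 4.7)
The plan is to reduce the claim directly to \cref{lem:elu}, which already does the heavy lifting: it guarantees that once a single vertex in $W$ has effective cut at least $\log(m)$, the whole subset does too. So the entire task is to exhibit one such vertex.

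First, I would note that $W$ must be non-empty (otherwise $\cut_{\GA}(W)=0$ and the statement fails), so pick an arbitrary $v\in W$. Then I would split on whether $v$ lies in $\Vd$ or in $\Ve$. Since we are in \cref{subsec:sketch_uniform} under the uniform-sketch-dimension assumption, every data vertex $v\in \Vd$ is adjacent to some $e_j\in \bar{E}$ with $w(e_j)=\log(s_j)>\log(m)$; because $e_j$ is an uncontracted edge in $\GL$ (it is a dimension to be sketched), it appears in $\cut_{\GA}(v)$ regardless of the chosen linearization of $\Ve$, so $\cut_{\GA}(v)\geq w(e_j)>\log(m)$. For the embedding case $v\in\Ve$, I would invoke the sufficient condition \eqref{eq:cut_bound_sufficient}, which gives $\cut_{\GA}(v)=\Omega(\log(m))$; combined with the assumption $m=\Omega(N\log(1/\delta)/\epsilon^2)$ driving the constants, this yields $\cut_{\GA}(v)\geq\log(m)$.

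Having produced such a $v\in W$, the conclusion $\cut_{\GA}(W)\geq\log(m)$ follows immediately from \cref{lem:elu} applied with $U=W$. I would close by remarking that the proof treats the two vertex types symmetrically: data vertices inherit the bound from the physical dimension $e_j$, while embedding vertices inherit it from the accuracy condition, and both routes feed into the same monotonicity lemma.

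The main (very mild) obstacle is bookkeeping about the linearization: one must confirm that uncontracted edges such as $e_j$ at a data vertex genuinely contribute to $\cut_{\GA}(v)$ under the definition of the sketching linearization DAG in \cref{sec:linearization}. This follows from \cref{def:directed_linearization} (uncontracted edges are directed outward from their adjacent vertex and thus appear in $E(v)$), but is worth stating explicitly so the case $v\in\Vd$ is airtight. No further technical work is required.
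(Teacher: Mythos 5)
Your proposal is correct and follows essentially the same route as the paper: establish $\cut_{\GA}(v)\geq\log(m)$ for every vertex by splitting into the data-vertex case (using the uniform-sketch-dimension assumption that each $v\in\Vd$ is adjacent to an edge of $\bar{E}$ with weight exceeding $\log(m)$) and the embedding-vertex case (using the sufficient condition \eqref{eq:cut_bound_sufficient}), then invoke \cref{lem:elu}. Your extra remarks about non-emptiness of $W$ and the direction of uncontracted edges in the linearization are harmless elaborations of what the paper leaves implicit.
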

\begin{proof}
Since each vertex in the data graph is adjacent to an edge to be sketched, and the edge dimension size is greater than $m$, we have $\cut_{\GA}(w)\geq \log(m)$ for all $w\in \Vd$. Since the embedding satisfies the $(\epsilon,\delta)$-accurate sufficient condition, we have $\cut_{\GA}(w)\geq \log(m)$ for all $w\in \Ve$. Therefore, $\cut_{\GA}(w)\geq \log(m)$ for all $w\in V$. Based on \cref{lem:elu}, $\cut_{\GA}(W)\geq \log(m)$ for all $W\subseteq V$.
\end{proof}

\begin{lemma}\label{lem:sketch_one_intermediate_lower}
Consider 
an $(\epsilon,\delta)$-accurate embedding $\Ge$ with a sketching linearization $\GA=(V,\EA,w)$. Consider any contraction tree $T_B$ for $(\Gd,\Ge)$.
If there exists a contraction output of $U\subset V$ formed in $T_B$ and $\cut_{\GA}(U)> \log(m)$, then the asymptotic cost for the contraction tree $T_B$ must be lower bounded by $\bigOmega{\exp\left(\cut_R(U) + \cut_{\GA}(U)\right)\cdot m}$.
\end{lemma}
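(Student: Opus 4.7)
The plan is to prove the claim by isolating a single contraction in $T_B$ whose cost alone matches the claimed lower bound, so that no path-following or amortization across the contraction tree is needed. The contraction in question is the first one that consumes $U$'s intermediate after it is formed; the hypothesis $\cut_{\GA}(U)>\log(m)$ rules out $U=V$ (since $\cut_{\GA}(V)$ equals the weight $\log m$ of the unique uncontracted output sketch edge), so $U$ is not the root of $T_B$ and is paired with some non-empty partner $W\subseteq V\setminus U$ in the next contraction above it.

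Given this contraction $(U,W)$, I would lower-bound its cost by decomposing $\cost_G(U,W) = \cost_L(U,W) + \cost_R(U,W)$ and handling each piece separately. For the $L$-side, \cref{lem:cost_linearization} expresses
\[
\cost_L(U,W) = \cut_{\GA}(U) + \cut_{\GA}(W) + \cut_{\GA}(V\setminus(U\cup W),\, U\cup W),
\]
and \cref{lem:w_cut} directly supplies $\cut_{\GA}(W)\geq \log m$ since $W$ is non-empty and we are in the uniform-sketch-dimension setting, giving $\cost_L(U,W)\ge \cut_{\GA}(U) + \log m$. For the $R$-side, $U\cap W = \emptyset$ implies $E(U,W)\subseteq E(W)$ (which holds even for hypergraphs, since any hyperedge containing vertices of both $U$ and $W$ is certainly adjacent to $W$ with a vertex outside $W$), hence $\cut_R(U,W)\le \cut_R(W)$, and therefore $\cost_R(U,W) = \cut_R(U)+\cut_R(W)-\cut_R(U,W)\ge \cut_R(U)$.

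Summing the two bounds gives $\cost_G(U,W)\ge \cut_R(U) + \cut_{\GA}(U) + \log m$, so the single contraction $(U,W)$ already has cost at least $\exp(\cut_R(U)+\cut_{\GA}(U))\cdot m$, which lower-bounds the total cost of $T_B$ and yields the claim. The main obstacle, such as it is, is purely bookkeeping: correctly invoking \cref{lem:cost_linearization} in the DAG formalism and checking the hypergraph inclusion $E(U,W)\subseteq E(W)$. Notably, no case analysis based on the location of the output sketch vertex $v^\ast$ is needed, and no inductive or amortized argument along the path from $U$ to the root of $T_B$ is required---a crucial observation, since the natural attempt to track $\cut_G(A_i)$ as $A_i$ grows along that path fails because growing a set can decrease its cut.
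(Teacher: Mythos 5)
Your proof follows essentially the same route as the paper's: isolate the single contraction $(U,W)$ that consumes $U$ (which must exist because $\cut_{\GA}(U)>\log(m)$ means $U$ is not the final output), split $\cost_G(U,W)$ into its $L$- and $R$-parts via \cref{lem:cost_linearization}, bound $\cut_{\GA}(W)\geq\log(m)$ with \cref{lem:w_cut}, and bound $\cost_R(U,W)\geq\cut_R(U)$. The extra justifications you supply (why $U\neq V$, and why $\cut_R(U,W)\leq\cut_R(W)$) are correct and merely make explicit steps the paper leaves implicit.
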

\begin{proof}
Since $\cut_{\GA}(U) > \log(m)$, there must exist a contraction $(U,W)\in T_B$ with $W$ containing some vertices in $V\setminus U$. 
Based on \cref{lem:w_cut}, $\cut_{\GA}(W)\geq \log(m)$.
Based on \cref{lem:cost_linearization}, we have
\begin{align*}
    \cost_G(U, W)
    &= \cost_R(U, W) + \cost_L(U, W) \\
    &= \cost_R(U, W) + \cut_{\GA}(U) + \cut_{\GA}(W) + \cut_{\GA}(V\setminus(U\cup W), U\cup W).
\end{align*}
Further, since $\cost_R(U, W)\geq \cut_R(U)$,
\begin{align*}
    \cost_G(U, W) & \geq \cut_R(U) + \cut_{\GA}(U) + \cut_{\GA}(W) \\
    & \geq \cut_R(U) + \cut_{\GA}(U) + \log(m).
\end{align*}
This proves the lemma since the contraction cost is $\bigTheta{\exp(\cost_G(U, W))}$.
\end{proof}

In \cref{lem:lowerbound_gen}, we show that when the data contraction tree $T_0$  contains the contraction $(U_i,V_i)$, then any contraction tree $T_B$ of $(\Gd,\Ge)$ that is constrained on $T_0$ will yield a contraction cost of $\Omega(y_i)$. To show that, we first discuss the case where $T_B$ also contains the contraction $(U_i,V_i)$ in \cref{lem:lowerbound_gen_case1}. 
The more general case where the contraction  $(U_i,V_i)$ need not be in $T_B$ is discussed in \cref{lem:lowerbound_gen}.

\begin{lemma} \label{lem:lowerbound_gen_case1}
Consider 
a specific contraction tree $T_0$ for $\Gd$, where the contraction $(U_i,V_i)$ is in $T_0$. For any embedding $\Ge$ satisfying the $(\epsilon,\delta)$-accurate sufficient condition
with only one output sketch dimension
and any contraction tree $T_B$ of $(\Gd,\Ge)$ constrained on $T_0$, if $(U_i, V_i)$ is also in $T_B$, the sketching asymptotic cost must be lower bounded by 
$\bigOmega{a_ib_ic_id_im^2 + a_ic_id_im^3}$,
where $a_i,b_i,c_i,d_i$ are defined in \eqref{eq:label}.
\end{lemma}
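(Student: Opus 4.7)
The plan is to isolate two distinct contractions in $T_B$ whose individual asymptotic costs supply the two terms of the lower bound. The hypothesis $(U_i,V_i)\in T_B$ is what I exploit: because neither operand of this contraction contains any embedding vertex, all sketch edges incident to $U_i$ or $V_i$ are still present at their full sizes $s_e>m$ when this contraction fires, and they remain on the output $W_i := U_i\cup V_i$ afterwards.

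First I would bound the cost of $(U_i,V_i)$ itself. By \cref{lem:cost_linearization}, $\cost_G(U_i,V_i) = \log(a_ib_ic_id_i) + \cost_\GL(U_i,V_i)$ and $\cost_\GL(U_i,V_i)\geq \cut_\GA(U_i)+\cut_\GA(V_i)$. Because data vertices precede embedding vertices in the sketching linearization, the embedding edges incident to any data subset are all outgoing in $\GA$, so $\cut_\GA(U_i) = \sum_{e\in\bar{E}(U_i)}\log s_e \geq \log m$ (each vertex in $U_i$ contributes at least one sketch edge of size $>m$ under the uniform-sketch assumption), and likewise for $V_i$. This alone makes the direct-contraction cost $\Omega(a_ib_ic_id_i\cdot m^2)$, giving the first term.

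Next I would look at the parent contraction of $W_i$ in $T_B$, call it $(W_i, Y)$. It exists because $W_i\subseteq\Vd$ but $\Ve\neq\emptyset$, so $W_i$ cannot be the root of $T_B$. Since no embedding vertex has been merged into $W_i$, every sketch edge in $\bar{E}(U_i\cup V_i)$ still points out of $W_i$ in the DAG, and $|\bar{E}(U_i\cup V_i)|\geq 2$ (at least one edge on each side), so $\cut_\GA(W_i)\geq 2\log m$. Applying \cref{lem:w_cut} to $Y$ yields $\cut_\GA(Y)\geq\log m$, whence \cref{lem:cost_linearization} gives $\cost_\GL(W_i,Y)\geq 3\log m$. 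Combined with $\cost_\GR(W_i,Y)\geq\cut_\GR(W_i)=\log(a_ic_id_i)$, the cost of $(W_i,Y)$ is $\Omega(a_ic_id_i\cdot m^3)$, supplying the second term.

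Summing these two contributions (from disjoint contractions in $T_B$) delivers the claimed bound. The main point to verify carefully is that the assumption $(U_i,V_i)\in T_B$ really guarantees $W_i$ is embedding-free at the moment it is produced; once that is clear, all of $\cut_\GA(U_i)$, $\cut_\GA(V_i)$, and $\cut_\GA(W_i)$ pick up their full unsketched $\log s_e > \log m$ contributions from the $\bar{E}$-edges, which is what powers the extra factor of $m$ in the second term beyond what \cref{lem:sketch_one_intermediate_lower} alone would give.
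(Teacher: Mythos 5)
Your proposal is correct and follows essentially the same route as the paper: first bound the cost of the contraction $(U_i,V_i)$ itself by $\Omega(a_ib_ic_id_im^2)$ using $\cut_{\GA}(U_i),\cut_{\GA}(V_i)\geq\log m$, then observe that $\cut_{\GA}(U_i\cup V_i)\geq 2\log m$ forces a later contraction of the output with cost $\Omega(a_ic_id_im^3)$. The only cosmetic difference is that the paper packages your second step as a separate lemma (\cref{lem:sketch_one_intermediate_lower}) and obtains $\cut_{\GA}(U_i),\cut_{\GA}(V_i)\geq\log m$ via \cref{lem:w_cut}, whereas you inline both arguments.
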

\begin{proof}
Consider any sketching linearization $\GA=(V,\EA,w)$ such that the embedding satisfies the $(\epsilon,\delta)$-accurate sufficient condition with only one output sketch dimension.
Based on \cref{lem:w_cut}, we have $\cut_{\GA}(U_i), \cut_{\GA}(V_i) \geq \log(m)$.
Based on \cref{lem:cost_linearization}, 
we have 
\begin{align*}
    \cost_G(U_i, V_i)
    &= \cost_R(U_i, V_i) + \cost_L(U_i, V_i) \\
    &= \cost_R(U_i, V_i) + \cut_{\GA}(U_i) + \cut_{\GA}(V_i) + \cut_{\GA}(V\setminus(U_i\cup V_i), U_i\cup V_i) \\
    & \geq \cost_R(U_i, V_i) + \cut_{\GA}(V_i) + \cut_{\GA}(V_i) \\
    & \geq \log(a_ib_ic_id_i)+ 2\log(m).
\end{align*}
Thus
this contraction has a cost of
$\bigOmega{ a_ib_ic_id_i\cdot m^2}.$
In addition, since
$U_i,V_i$ are subsets of the data vertices, $\cut_L(U_i,V_i)=0$. Therefore, based on \eqref{eq:linearization_p3},
\[\cut_{\GA}(U_i\cup V_i) = \cut_{\GA}(U_i) + \cut_{\GA}(V_i) \geq 2\log(m).\]
Based on \cref{lem:sketch_one_intermediate_lower}, the cost needed to sketch $U_i\cup V_i$ is $\bigOmega{a_ic_id_im^3}$. Thus
the overall asymptotic cost is lower bounded by 
$\bigOmega{a_ib_ic_id_im^2 + a_ic_id_im^3}$.
This finishes the proof. 
\end{proof}

\begin{lemma} \label{lem:lowerbound_gen}
Consider 
a specific contraction tree $T_0$ for $\Gd$, where the contraction $(U_i,V_i)$ is in $T_0$. For any embedding $\Ge$ satisfying the $(\epsilon,\delta)$-accurate sufficient condition
with only one output sketch dimension
and any contraction tree $T_B$ of $(\Gd,\Ge)$ constrained on $T_0$, the sketching asymptotic cost must be lower bounded by 
\begin{equation}\label{eq:lowerbound_gen_statement}
   \Omega(y_i) =  \Omega\left(a_ib_ic_id_im^2 + m^2d_i\sqrt{a_ib_ic_im}\cdot\min(\sqrt{a_i},\sqrt{c_i})\right),
\end{equation}
where $a_i,b_i,c_i,d_i$ are defined in \eqref{eq:label}, and $y_i$ is defined in \eqref{eq:yi}.
\end{lemma}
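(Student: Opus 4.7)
The plan is to lower bound the cost of $T_B$ by obtaining the two additive terms of $y_i$ from separate cost contributions within $T_B$. For the first term $a_i b_i c_i d_i m^2$, I would use the constrained-tree condition, which guarantees a contraction $(\hat U_i, \hat V_i) \in T_B$ with $\hat U_i \cap \Vd = U_i$ and $\hat V_i \cap \Vd = V_i$. Applying \cref{lem:cost_linearization} gives $\cost_\GL(\hat U_i, \hat V_i) = \cut_{\GA}(\hat U_i) + \cut_{\GA}(\hat V_i) + \cut_{\GA}(V\setminus \hat W_i, \hat W_i)$ where $\hat W_i = \hat U_i \cup \hat V_i$, and because embedding vertices contribute no edges in $\GR$, $\cost_\GR(\hat U_i, \hat V_i) = \cost_\GR(U_i, V_i) = \log(a_i b_i c_i d_i)$. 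Invoking \cref{lem:w_cut} yields $\cut_{\GA}(\hat U_i), \cut_{\GA}(\hat V_i) \geq \log m$, so the contraction cost is at least $a_i b_i c_i d_i m^2$.

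For the second term $m^2 d_i \sqrt{a_i b_i c_i m}\min(\sqrt{a_i},\sqrt{c_i})$, I would assume WLOG $a_i \leq c_i$ (the other case is symmetric), so the target is $a_i m^{2.5} d_i \sqrt{b_i c_i}$. I proceed by case analysis on the structure of $T_B$ near $\hat W_i$. When $\cut_{\GA}(\hat W_i) > \log m$, \cref{lem:sketch_one_intermediate_lower} applied at $\hat W_i$ forces a subsequent contraction involving $\hat W_i$ to have cost at least $a_i c_i d_i \exp(\cut_{\GA}(\hat W_i)) \cdot m$. When $\cut_{\GA}(\hat W_i)$ is close to its minimum $\log m$, the identity $\cut_{\GA}(\hat W_i) = \cut_{\GA}(\hat U_i) + \cut_{\GA}(\hat V_i) - \cut_\GL(\hat U_i, \hat V_i)$ forces $\cut_\GL(\hat U_i, \hat V_i)$ to be large, inflating the shared dimension in the contraction $(\hat U_i, \hat V_i)$ and hence its cost. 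Balancing these two sub-cases at their geometric mean should yield the $\sqrt{m}$ factor characteristic of the binary-tree-style embedding savings encoded in $y_i$.

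I expect the main obstacle to be the second-term argument, particularly in the sub-regime $b_i c_i < m$ where the target strictly dominates $a_i b_i c_i d_i m^2$ so the contraction cost alone is insufficient. Unlike \cref{lem:lowerbound_gen_case1}, whose proof exploits $\cut_\GL(U_i, V_i) = 0$ (yielding $\cut_{\GA}(U_i \cup V_i) \geq 2\log m$ and hence an $a_i c_i d_i m^3$ term), the present setting permits embedding vertices inside $\hat U_i, \hat V_i$, so $\cut_\GL(\hat U_i, \hat V_i)$ may be nonzero and $\cut_{\GA}(\hat W_i)$ may collapse to exactly $\log m$. The $\sqrt{m}$ lower bound then has to be extracted by locating an intermediate tensor formed en route to $\hat U_i$ or $\hat V_i$ whose outgoing cut in $\GA$ is strictly above $\log m$, and applying \cref{lem:sketch_one_intermediate_lower} at that intermediate; this dimension-counting step mirrors the structural freedom exploited by the efficient algorithm of \cref{subsec:binary} and is the technical heart of the proof.
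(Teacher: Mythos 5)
Your first-term argument ($a_ib_ic_id_im^2$ from the forced contraction $(\hat U_i,\hat V_i)$ via \cref{lem:cost_linearization} and \cref{lem:w_cut}) is correct and matches the paper. The gap is in the second term, and it is exactly where you predicted trouble. Your proposed balance is between (a) $\cut_{\GA}(\hat W_i)$ large, charged via \cref{lem:sketch_one_intermediate_lower} at $\hat W_i$, and (b) $\cut_{\GA}(\hat W_i)$ small, which forces $\cut_{\GL}(\hat U_i,\hat V_i)\ge 2\log m-\cut_{\GA}(\hat W_i)$ and inflates the cost of $(\hat U_i,\hat V_i)$. Writing $\cut_{\GA}(\hat W_i)=\log m+t$, branch (a) gives $a_ic_id_im^2e^{t}$ and branch (b) gives only $a_ib_ic_id_im^2e^{-t}$ (the shared dimension enters the contraction cost once, alongside $\cut_{\GA}(\hat W_i)\ge\log m$, so it never pushes past $a_ib_ic_id_im^2$ when $t=0$). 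Their geometric mean is $a_ic_id_im^2\sqrt{b_i}$, which falls short of the target $a_id_im^2\sqrt{b_ic_im}$ whenever $c_i<m$; in particular when $b_ic_i<m$ neither branch nor their balance reaches the bound. Your suggested repair---find an intermediate en route to $\hat U_i$ or $\hat V_i$ with \emph{outgoing} cut strictly above $\log m$ and apply \cref{lem:sketch_one_intermediate_lower} there---does not close this, because no such intermediate need exist: every intermediate can have $\cut_{\GA}(\cdot)$ exactly $\log m$ while the bound still must hold.

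The paper's mechanism is different and hinges on the \emph{incoming} cut, not an outgoing one. Let $x$ be the linearization-last embedding vertex in $\hat U_i\cup\hat V_i$ (so $\cut_{\GA}(x,(\hat U_i\cup\hat V_i)\setminus\{x\})=0$), say $x\in\hat U_i\setminus U_i$, and set $\alpha=\exp(\cut_{\GA}(\hat V_i,\hat U_i))$. The contraction $(Y_1,Y_2)$ that forms $\hat U_i$ must, by the third term of \cref{lem:cost_linearization}, pay for $\cut_{\GA}(V\setminus\hat U_i,\hat U_i)\ge\log\alpha$, giving cost $\Omega(a_ib_id_im^2\alpha)$; and the property of $x$ yields $\cut_{\GA}(\hat U_i)-\cut_{\GA}(\hat U_i,\hat V_i)\ge\cut_{\GA}(x)\ge\log m$, hence $\cut_{\GA}(\hat W_i)\ge 2\log m-\log\alpha$, so when $\alpha<m$ \cref{lem:sketch_one_intermediate_lower} forces a further cost of $\Omega(a_ic_id_im^3/\alpha)$. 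Balancing $a_ib_id_im^2\alpha$ against $a_ic_id_im^3/\alpha$ gives precisely $a_id_im^2\sqrt{b_ic_im}$ (and the symmetric case $x\in\hat V_i\setminus V_i$ gives $c_id_im^2\sqrt{a_ib_im}$, each dominating the $\min$ in the statement). This charging of the formation of $\hat U_i$ for the edges still open toward $\hat V_i$ is the missing step; without it the bound cannot be recovered in the regime $b_ic_i<m$.
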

\begin{proof}
Consider any sketching linearization $\GA=(V,\EA,w)$ such that the embedding satisfies the $(\epsilon,\delta)$-accurate sufficient condition with only one output sketch dimension.
We first consider the case where the contraction $(U_i,V_i)$ exists in $T_B$. Based on \cref{lem:lowerbound_gen_case1}, 
the overall asymptotic cost is lower bounded by 
$\bigOmega{a_ib_ic_id_im^2 + a_ic_id_im^3}$.
Since \[a_ib_ic_id_im^2 + a_ic_id_im^3 = a_ic_id_im^2(b_i+m)\geq 2a_ic_id_im^2\sqrt{b_im},\] 
the overall asymptotic cost is lower bounded by 
\[
\bigOmega{a_ib_ic_id_im^2 + a_ic_id_im^2\sqrt{b_im}}
= \bigOmega{a_ib_ic_id_im^2 + a_im^2d_i\sqrt{b_ic_im}},
\]
and hence it satisfies \eqref{eq:lowerbound_gen_statement}. 

We next consider the other case where the contraction $(U_i,V_i)$ is not performed directly in $T_B$. 
Since $T_B$ is constrained on $T_0$, there must exist a contraction $(\hat{U}_i, \hat{V}_i)\in T_B$ with either $\hat{U}_i$ or $\hat{V}_i$ containing embedding vertices, and
$\hat{U}_i\cap \Vd = U_i$, $\hat{V}_i\cap \Vd = V_i$. Let $x$ be the last embedding vertex (based on the linearization order) applied in $T_B$ to $\hat{U}_i\cup \hat{V}_i$, so that 
$\cut_{\GA}(x,(\hat{U}_i\cup \hat{V}_i)\setminus\{x\}) = 0.$
For the case where $x\in \hat{U}_i\setminus U_i$, we show below that 
the sketching asymptotic cost is lower bounded by
\begin{equation}\label{eq:lowerbound_xU}
    \Omega\left(
a_ib_ic_id_im^2 + a_im^2d_i\sqrt{b_ic_im}
\right).
\end{equation}
For the other case where $x\in \hat{V}_i\setminus V_i$, 
we have the cost is lower bounded by $
\Omega\left(
a_ib_ic_id_im^2 + c_im^2d_i\sqrt{a_ib_im}
\right)
$
by symmetry.
Together, these two results prove the lemma.

\paragraph{Detailed proof of \eqref{eq:lowerbound_xU}}
Since $|\hat{U}_i|>|U_i|$, there must exist a contraction $(Y_1,Y_2)$, for which the output is $\hat{U}_i = Y_1\cup Y_2$. 
Based on \cref{lem:cost_linearization},
we have 
\begin{align*}
    \cost_G(Y_1, Y_2)
    &= \cost_R(Y_1, Y_2) + \cost_L(Y_1, Y_2) \\
    &= \cost_R(Y_1, Y_2) + \cut_{\GA}(Y_1) + \cut_{\GA}(Y_2) + \cut_{\GA}(V\setminus(Y_1\cup Y_2), Y_1\cup Y_2) \\
    & \geq \cost_R(Y_1, Y_2) + \cut_{\GA}(Y_1) + \cut_{\GA}(Y_2) + \cut_{\GA}(\hat{V}_i, \hat{U}_i)\\
    & \geq \log(a_ib_id_i)+ 2\log(m) +  \cut_{\GA}(\hat{V}_i, \hat{U}_i).
\end{align*}
Thus, the cost of  the contraction $(Y_1,Y_2)$ is lower bounded by 
\begin{align}\label{eq:lowerbound_gen_cost1}
    \bigOmega{ a_ib_id_im^2\cdot \exp\left(\cut_{\GA}(\hat{V}_i, \hat{U}_i)\right)}.
\end{align}
In addition, since
\begin{align*}
    \cost_G(\hat{U}_i, \hat{V}_i)
    &= \cost_R(\hat{U}_i, \hat{V}_i) + \cut_{\GA}(\hat{U}_i) + \cut_{\GA}(\hat{V}_i) + \cut_{\GA}(V\setminus(\hat{U}_i\cup \hat{V}_i), \hat{U}_i\cup \hat{V}_i) \\
    & \geq \log(a_ib_ic_id_i)+ 2\log(m) ,
\end{align*}
the contraction $(\hat{U}_i,\hat{V}_i)$ yields a cost lower bounded by
\begin{align}\label{eq:lowerbound_gen_cost2}
\bigOmega{ a_ib_ic_id_i\cdot m^2}.
\end{align}
Combining \eqref{eq:lowerbound_gen_cost1} and \eqref{eq:lowerbound_gen_cost2}, we have that the contractions  $(Y_1,Y_2)$ and $(\hat{U}_i,\hat{V}_i)$ have a cost of 
\begin{align}\label{eq:YS}
    \bigOmega{ a_ib_id_im^2\cdot \exp\left(\cut_{\GA}(\hat{V}_i, \hat{U}_i)\right)
    + a_ib_ic_id_i\cdot m^2
    }.
\end{align}
When $\cut_{\GA}(\hat{V}_i,\hat{U}_i) = \log(m)$, \eqref{eq:YS} implies that
the overall asymptotic cost is lower bounded by 
$\Omega(a_ib_ic_id_im^2 + a_ib_id_im^3)$.
Since 
\[
a_ib_ic_id_im^2 + a_ib_id_im^3 = a_ib_id_im^2(c_i+m)\geq 2a_ib_id_im^2\sqrt{c_im},
\] 
the overall asymptotic cost is lower bounded by 
\[
\Omega\left(a_ib_ic_id_im^2 + a_ib_id_im^2\sqrt{c_im}\right)
= \Omega\left(a_ib_ic_id_im^2 + a_im^2d_i\sqrt{b_ic_im}\right)
.\]
When $\cut_{\GA}(\hat{V}_i,\hat{U}_i) < \log(m)$, 
based on \cref{lem:graph_relation}, the effective sketch dimensions of $\hat{U}_i\cup \hat{V}_i$ satisfy
\begin{align}\label{eq:lowerbound_gen_sketchsize}
 \cut_{\GA}(\hat{U}_i\cup \hat{V}_i) &=
 \left(\cut_{\GA}(\hat{U}_i) 
 - 
 \cut_{\GA}(\hat{U}_i,\hat{V}_i)\right)
 + \cut_{\GA}(\hat{V}_i)  - \cut_{\GA}(\hat{V}_i,\hat{U}_i) \nonumber \\
& \geq \cut_{\GA}(x) + \cut_{\GA}(\hat{V}_i) - \cut_{\GA}(\hat{V}_i,\hat{U}_i)
\nonumber\\
& \geq 2\log(m) - \cut_{\GA}(\hat{V}_i,\hat{U}_i),   
\end{align}
where the first inequality holds since 
\begin{align*}
\cut_{\GA}(\hat{U}_i) - \cut_{\GA}(\hat{U}_i,\hat{V}_i)&= \cut_{\GA}(\hat{U}_i,V\setminus(\hat{U}_i\cup \hat{V}_i)) + 
\cut_{\GA}(\hat{U}_i,*)
\\
& \geq \cut_{\GA}(x,V\setminus(\hat{U}_i\cup \hat{V}_i)) + 
\cut_{\GA}(x,*) =   \cut_{\GA}(x),
\end{align*}
and the second inequality in \eqref{eq:lowerbound_gen_sketchsize} holds since  $\cut_{\GA}(x), \cut_{\GA}(\hat{V}_i) \geq \log(m)$ based on \cref{lem:w_cut}.
Based on the condition $\cut_{\GA}(\hat{V}_i,\hat{U}_i) < \log(m)$ as well as  \eqref{eq:lowerbound_gen_sketchsize}, we have $\cut_{\GA}(\hat{U}_i\cup \hat{V}_i) > \log(m)$.

Based on \cref{lem:sketch_one_intermediate_lower}, since $\cut_{\GA}(\hat{U}_i\cup \hat{V}_i) > \log(m)$,
there must exist another contraction in $T_B$ to sketch $\hat{U}_i\cup \hat{V}_i$ with a cost of  \begin{align*}
    \bigOmega{\exp\left(\cut_R(\hat{V}_i\cup \hat{U}_i) + \cut_{\GA}(\hat{V}_i\cup \hat{U}_i)\right)\cdot m} &=
    \Omega\left(a_ic_id_i\cdot
\exp\left(\cut_{\GA}(\hat{V}_i\cup \hat{U}_i)\right)
\cdot m\right)\\
& \overset{\eqref{eq:lowerbound_gen_sketchsize}}{=} \Omega\left(a_ic_id_i\cdot \frac{m^3}{\exp(\cut_{\GA}(\hat{V}_i, \hat{U}_i))}\right).
\end{align*}
 Let $\alpha = \exp\left(\cut_{\GA}(\hat{V}_i, \hat{U}_i)\right)$, the asymptotic cost is then lower bounded by 
\[
\Omega\left(a_ib_ic_id_im^2 + a_ib_id_im^2\cdot \alpha + a_ic_id_im^3 \cdot \frac{1}{\alpha}\right) = 
\Omega\left(
a_ib_ic_id_im^2 + a_im^2d_i\sqrt{b_ic_im}
\right).
\]
This finishes the proof.

\end{proof}

\begin{proof}[Proof of \cref{thm:lowerbound_uniform}]
Based on \cref{lem:lowerbound_gen}, 
the cost of $\Omega(y_i)$ is needed to sketch the contraction $(U_i,V_i)$.
Since $T_0$ contains contractions $(U_i,V_i)$ for $i\in [N-1]$, the asymptotic cost of $T_B$ must be lower bounded by $\bigOmega{\sum_{i=1}^{N-1}y_i}$.
In addition, in the analysis of \cref{lem:lowerbound_gen}, at least one embedding vertex is needed to sketch each contraction $(U_i,V_i)$,
thus $\Ne = \Omega(N)$ and $m = \Omega(N\log(1/\delta)/\epsilon^2)$ for the lower bound $\bigOmega{\sum_{i=1}^{N-1}y_i}$ to hold.

In addition, each $v_j\in V_D$ for $j\in [N]$ is adjacent to $e_j$ and each $w(e_j)>\log(m)$. Based on \cref{lem:sketch_one_intermediate_lower}, the asymptotic cost must be lower bounded by 
\[\bigOmega{\sum_{j=1}^N\exp\left(\cut_R(v_j) + \cut_{\GA}(v_j)\right)\cdot m}
= \bigOmega{\sum_{j=1}^N\exp\left(\cut_G(v_j)\right)\cdot m}
.\]
The above holds since $v_j$ is a vertex in the data graph, thus $\cut_{\GA}(v_j) = \cut_{\GL}(v_j)$ and $\cut_R(v_j) + \cut_{\GA}(v_j) = \cut_G(v_j)$.  This finishes the proof.
\end{proof}

\subsection{Sketching general data}\label{subsec:sketch_general}

In this section, we look at general tensor network data $\Gd$, where each vertex in $\Gd$ can either be adjacent to an edge to be sketched with weight greater than $\log(m)$ or not adjacent to any edge to be sketched. Below we consider any data contraction tree $T_0$ containing $\stD{1},\ldots, \stD{N}, \st{S}, \st{I}$ defined in \cref{subsec:summary_efficient_ebmedding}.  We also let $\stX{j}\subset V$ represent the sub network contracted by $\stD{j}$. 
We present the asymptotic sketching lower bound in \cref{thm:lowerbound_gen}.

\begin{lemma}\label{lem:sketch_Sj}
Consider $G_D$ with a data contraction tree $T_0$ containing $\stD{j}$, which is a set containing contractions such that $e_j$ is the only data edge adjacent to the contraction output and in $\bar{E}$ (set of data edges to be sketched).  For any embedding $\Ge$ satisfying the $(\epsilon,\delta)$-accurate sufficient condition
with only one output sketch dimension
and any contraction tree $T_B$ of $(\Gd,\Ge)$ constrained on the data contraction tree $T_0$, the sketching asymptotic cost must be lower bounded by $\Omega(\stt{j})$, where $\stt{j}$ is the optimal asymptotic cost to sketch the sub tensor network $X(e_j)$ (defined in \cref{tab:notations}) with an adjacent matrix in the Kronecker product embedding.
\end{lemma}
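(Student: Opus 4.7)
The plan is to extract from $T_B$ the subset of contractions that collectively produce a sketched contraction output of $X(e_j)$, call it $\mathcal{T}_j$, and show that the cost of $\mathcal{T}_j$ alone is already $\Omega(\stt{j})$. Since the remaining contractions of $T_B$ only add non-negative cost, this yields the lemma.

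First, by \cref{def:contrained_contraction_tree}, each contraction $(U_i,V_i)\in \stD{j}$ is matched by some $(\hat U_i,\hat V_i)\in T_B$ with $\hat U_i\cap \Vd = U_i$ and $\hat V_i\cap \Vd = V_i$, and since $T_B$ is a binary tree these counterparts must occur in an order consistent with $T_0$. I collect these $|\stD{j}|$ counterpart contractions into $\mathcal{T}_j$, together with the $T_B$ contractions that assemble any embedding tensors appearing inside the $\hat U_i$ and $\hat V_i$. Since $X(e_j)$ has $e_j$ as its only edge in $\bar E$ and $\Ge$ has only one output sketch dimension, the embedding tensors appearing inside $\mathcal{T}_j$ must collectively act as a sub-embedding whose sole role is to sketch the dimension $e_j$ down to size $m$.

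Second, I construct a companion strategy $\widetilde{\mathcal{T}}_j$ by replacing this sub-embedding with a single Gaussian matrix $\tilde S_j\in\R^{m\times s_j}$ inserted into the contraction sequence at the position where the first embedding tensor enters along the $e_j$-axis in $\mathcal{T}_j$. The strategy $\widetilde{\mathcal{T}}_j$ is then a valid Kronecker-product (single-matrix) sketching of $X(e_j)$ that respects the $T_0$-ordering of the data contractions, and therefore has cost at least $\stt{j}$ by definition of $\stt{j}$ as the optimum over such insertions $k(e_j)\in\stD{j}$.

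Third, I would compare $\mathrm{cost}(\mathcal{T}_j)$ to $\mathrm{cost}(\widetilde{\mathcal{T}}_j)$. The first embedding-tensor contraction appearing in $\mathcal{T}_j$ already involves the full $e_j$-dimension $s_j$, and by the accuracy sufficient condition together with \eqref{eq:cut_bound_sufficient} its output retains an effective sketch dimension of size at least $m$. Hence this single contraction is at least as costly as the multiplication by $\tilde S_j$ at the same insertion point, and every subsequent data contraction carries an $e_j$-dimension of size at most $m$ in both strategies. Any additional embedding-tensor contractions present in $\mathcal{T}_j$ but absent from $\widetilde{\mathcal{T}}_j$ contribute only positive extra cost, yielding $\mathrm{cost}(\mathcal{T}_j)\ge \mathrm{cost}(\widetilde{\mathcal{T}}_j)\ge \stt{j}$.

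The main obstacle will be the comparison in the third step, specifically when embedding tensors are interleaved with data contractions at multiple points along $\mathcal{T}_j$ rather than applied contiguously. I expect to handle this by induction on the number of embedding tensors attached along the $e_j$-side of the data contraction path, verifying at each step that pulling them toward a common insertion point in the companion strategy $\widetilde{\mathcal{T}}_j$ does not increase total cost, so that the base case of a single matrix at the optimal position dominates from below.
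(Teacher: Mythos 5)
Your overall route---locate the point where the first embedding tensor is absorbed along the $e_j$-chain, compare against the single-matrix strategy inserted at that same point, and invoke the definition of $\stt{j}$ as a minimum over insertion positions---is essentially the paper's: its proof identifies a contraction $(U_k,X)$ with $k\in\stD{j}$ and $X\subset\Ve$ in $T_B$ and shows the total cost is at least the cost $f(k)$ of the single-matrix strategy inserted at $k$, which is $\Omega(\stt{j})$ by definition. But your third step, which is where the entire content of the lemma lives, has a gap as written. You justify $\mathrm{cost}(\mathcal{T}_j)\ge \mathrm{cost}(\widetilde{\mathcal{T}}_j)$ by saying that ``every subsequent data contraction carries an $e_j$-dimension of size at most $m$ in both strategies.'' That inequality points the wrong way: an upper bound of $m$ on the post-insertion dimension in the actual tree would allow $\mathcal{T}_j$ to be \emph{cheaper} than the companion, which is exactly the failure mode the accuracy condition must exclude (an inaccurate embedding could collapse $e_j$ to size $1$ immediately and beat $\stt{j}$). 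What is needed is the reverse bound: every intermediate $\hat U_i$ formed after the insertion still has effective sketch dimension at least $m$, i.e.\ $\cut_{\GA}(\hat U_i)\ge \log(m)$. This is the paper's \cref{lem:elu} (any vertex subset containing a vertex of effective sketch dimension at least $\log(m)$ inherits that bound), combined with \cref{lem:cost_linearization} to turn it into the per-contraction cost lower bound $\Omega(a_ib_ic_id_i\cdot m)$ for each counterpart contraction $(\hat U_i,\hat V_i)$ with $i\in\stD{j}$, $i\ge k$.

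Once that monotonicity fact is in place, the interleaving obstacle you defer to an induction evaporates: there is no need to ``pull'' embedding tensors to a common insertion point. One simply lower-bounds three disjoint groups of contractions of $T_B$---the counterparts with $i<k$, which still carry the full dimension $s_j$ and cost $\Omega(a_ib_ic_id_i\cdot s_j)$; the contraction $(U_k,X)$, which costs $\Omega(a_kb_kd_ks_jm)$ because $U_k$ carries $s_j$ and $X$ has effective sketch dimension at least $\log(m)$; and the counterparts with $i\ge k$, which cost $\Omega(a_ib_ic_id_i\cdot m)$---discards every other embedding contraction as nonnegative extra cost, and observes that the sum is exactly $f(k)\ge \min_{k'}f(k')=\Omega(\stt{j})$. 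You should also cover the two boundary cases your sketch omits: $\stD{j}=\emptyset$, where the bound $\Omega(\exp(\cut_G(v_j))\cdot m)$ follows directly, and the case where $\stX{j}$ is fully contracted before any embedding tensor touches it, where the subsequent contraction $(\stX{j},W)$ supplies the missing $\Omega(s_jm)$ factor because $W$ necessarily contains a vertex of effective sketch dimension at least $\log(m)$.
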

\begin{proof}

When $\stD{j} = \emptyset$, $X(e_j) = \{v_j\}$, where $v_j$ is the vertex in the data graph adjacent to $e_j$. As is analyzed in the proof of \cref{thm:lowerbound_uniform}, the asymptotic cost must be lower bounded by $\bigOmega{\exp\left(\cut_G(v_j)\right)\cdot m}$, which equals the asymptotic cost to contract $v_j$ with the adjacent embedding matrix.

Now we discuss the case where $\stD{j} \neq \emptyset$.
We first consider the case where there is a contraction $(\stX{j}, W)$ in $T_B$.
We show that under this case, the cost is lower bounded by $\Omega(\stt{j})$. We then show that for the case where there is no contraction $(\stX{j}, W)$ in $T_B$, meaning that some sub network of $\stX{j}$ is sketched, the cost is also lower bounded by $\Omega(\stt{j})$. Summarizing both cases prove the lemma.

Consider the case where there exists a contraction $(\stX{j}, W)$ in $T_B$. 
Contracting $\stX{j}$ yields a cost of $\Omega(\sum_{i\in \stD{j}} a_ib_ic_id_i\cdot s_j)$.
Next we analyze the contraction cost of $(\stX{j}, W)$.
Since $\stX{j}$ is the contraction output of $\stD{j}$, $W$ must either contain embedding vertices, or contain some data vertex adjacent to edges in $\bar{E}$ (edges to be sketched). Therefore, $W$ contains some vertex $v$ with $\cut_{\GA}(v)\geq \log(m)$. Based on \cref{lem:elu}, 
we have $\cut_{\GA}(W) \geq \log(m)$.
Therefore, 
\begin{align*}
    \cost_\GL(\stX{j}, W) &= \cut_{\GA}(\stX{j}) + \cut_{\GA}(W) + \cut_{\GA}(V\setminus(\stX{j}\cup W), \stX{j}\cup W) \\
    & \geq \log(s_j) + \log(m).
\end{align*}
Let $l\in \stD{j}$ denote the last contraction in $\stD{j}$, then we have $\cut_R(\stX{j}) = \log(a_lc_ld_l)$. Thus, we have
\begin{align}\label{eq:SjW}
    \cost_G(\stX{j}, W) &= \cost_{\GL}(\stX{j}, W) + \cost_{\GR}(\stX{j}, W) \\
    & \geq \cost_{\GL}(\stX{j}, W) + \cut_R(\stX{j})
 \geq  \log(a_lc_ld_ls_jm) ,
\end{align}
making the cost lower bounded by $\Omega\left(
\sum_{i\in \stD{j}} a_ib_ic_id_i \cdot s_j + a_lc_ld_ls_jm
\right)$. Note that contracting $\stX{j}$ and sketching the contraction output with a matrix in the Kronecker product embedding yields a cost of $\Theta\left(
\sum_{i\in \stD{j}} a_ib_ic_id_i \cdot s_j + a_lc_ld_ls_jm
\right)$, which upper-bounds the value of $\stt{j}$ based on definition. Thus the sketching cost is lower bounded by $\Omega(\stt{j})$.

Below we analyze the case where there is no contraction $(\stX{j}, W)$ in $T_B$.
Without loss of generality, for each contraction $(U_i,V_i)$ with $i\in \stD{j}$, assume that $U_i$ is adjacent to $e_j$. 
When $\stX{j}$ is not formed in $T_B$,
there must exist $U_k$ with $k\in \stD{j}$, and a contraction $(U_k, X)$ with $X\subset \Ve$ is in $T_B$. 
All contractions before $k$ yield a cost of 
\begin{equation}\label{eq:sketch_Sj_1}
   \Omega\left(
\sum_{i\in \stD{j}, i<k} a_ib_ic_id_i \cdot s_j\right). 
\end{equation}
Similar to the analysis for the contraction $(\stX{j},W)$ in \eqref{eq:SjW}, the contraction $(U_k, X)$ yields a cost of 
\begin{equation}\label{eq:sketch_Sj_2}
\Omega(a_kb_kd_ks_jm).
\end{equation}
For other contractions in $T_0$, $(U_i,V_i)$ with $i\in \stD{j}, i\geq k$, there must exist some contractions $(\hat{U}_i, \hat{V}_i)$ in $T_B$ with $\hat{U}_i \cap \Vd = U_i$, $ \hat{V}_i\cap \Vd = V_i$, since $T_B$ is constrained on $T_0$. Therefore, we have
\begin{align}\label{eq:sketch_Sj_3}
    \cost_G(\hat{U}_i, \hat{V}_i) &= \cost_{\GR}(\hat{U}_i, \hat{V}_i) + \cost_{\GL}(\hat{U}_i, \hat{V}_i) = \cost_{\GR}({U}_i, {V}_i) + \cost_{\GL}(\hat{U}_i, \hat{V}_i) \nonumber \\
    & \geq \cost_{\GR}({U}_i, {V}_i) + \cut_{\GA}(\hat{U}_i)  \geq \log(a_ib_ic_id_im).
\end{align}
In the last inequality in \eqref{eq:sketch_Sj_3} we use the fact that there exists a vertex $v\in U_i \subseteq \hat{U}_i$ with $\cut_{\GA}(v) = \log(s_j) \geq \log(m)$, then based on \cref{lem:elu},  $\cut_{\GA}(\hat{U}_i) \geq \log(m)$.

Combining \eqref{eq:sketch_Sj_1}, \eqref{eq:sketch_Sj_2} and \eqref{eq:sketch_Sj_3}, we have the cost is lower bounded by
\begin{align*}
  \bigOmega{f(k)}  = \bigOmega{\sum_{i\in \stD{j}, i<k} a_ib_ic_id_i \cdot s_j + a_kb_kd_ks_jm + \sum_{i\in \stD{j}, i\geq k}a_ib_ic_id_i \cdot m},
\end{align*}
where $f(k)$ represents the asymptotic cost to contract $\stX{j}$ with an embedding matrix adjacent to $e_j$, when sketching is performed at $k$th contraction with $k\in \stD{j}$.
Based on the definition of $\stt{j}$, we have $f(k) = \Omega(\stt{j})$,
thus finishing the proof.
\end{proof}

\begin{lemma}\label{lem:m2.5}
Consider any data $G_D$. For any embedding $\Ge$ satisfying the $(\epsilon,\delta)$-accurate sufficient condition
with only one output sketch dimension
and any contraction tree $T_B$ of $(\Gd,\Ge)$, the sketching asymptotic cost must be lower bounded by $\Omega(Nm^{2.5})$, where $m=\Omega(N\log(1/\delta)/\epsilon^2)$.
\end{lemma}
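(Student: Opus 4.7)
The plan is to leverage the already-established \cref{lem:lowerbound_gen} by constructing an appropriate data contraction tree from $T_B$, then to cover the remaining cases via a parallel balancing argument. The overall picture is that any contraction tree $T_B$ must perform $N-1$ distinct sketch-dimension mergings---one for each pairwise combination required to reduce the $N$ initial sketch outputs adjacent to $\bar{E}$ down to the single $m$-sized output tensor---and each such merging incurs cost $\Omega(m^{2.5})$.

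First I would construct an induced data contraction tree $T_0$ from $T_B$ by projecting onto the data side: for each contraction $(A,B)\in T_B$ with $A\cap\Vd$ and $B\cap\Vd$ both nonempty and disjoint, I include $(A\cap\Vd,\, B\cap\Vd)$ in $T_0$. By construction $T_B$ is constrained on $T_0$ in the sense of \cref{def:contrained_contraction_tree}, so \cref{lem:lowerbound_gen} yields a cost lower bound of $\Omega(y_i)\geq\Omega(m^{2.5})$ for each contraction $(U_i,V_i)\in\st{S}$ of $T_0$. In the favorable case where every data vertex is adjacent to at most one edge in $\bar{E}$, we have $|\st{S}|=N-1$, and summing over these contractions immediately yields the desired $\Omega(Nm^{2.5})$ bound.

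The hard part will be the case where some data vertex $v$ is adjacent to $k_v>1$ sketch edges, in which case $|\st{S}|=N_a-1<N-1$ where $N_a$ is the number of data vertices adjacent to $\bar{E}$, and the $\sum_v(k_v-1)=N-N_a$ ``intra-vertex'' sketch-dimension mergings never appear as contractions in $\st{S}$ of the induced $T_0$. To handle each such intra-vertex merging, I would replay the two-case AM-GM argument from the proof of \cref{lem:lowerbound_gen}: at the moment two $m$-sized sketch dimensions on a common intermediate are combined into one, either the direct contraction producing the output has effective sketch cut at least $2\log m$ and therefore requires a subsequent $\Omega(m^3)$ sketching step on top of the $\Omega(m^2)$ direct cost (so by $x+m\geq 2\sqrt{xm}$ with $x=m^2$ the combined cost is $\Omega(m^{2.5})$), or the merging is routed through an intermediate embedding tensor at cost at least $\Omega(m^2\sqrt{m})$. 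Combining the $N_a-1$ cross-vertex mergings counted by $\st{S}$ with the $N-N_a$ intra-vertex mergings accounts for exactly $N-1$ distinct $\Omega(m^{2.5})$-cost events in $T_B$, and summing yields the claimed $\Omega(Nm^{2.5})$ lower bound. A subtle point to handle carefully will be ensuring that the intra-vertex merging events I charge are disjoint from the $\st{S}$ contractions already counted, which I would argue by associating each merging with the distinct embedding tensor it passes through.
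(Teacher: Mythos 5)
Your route is genuinely different from the paper's. The paper proves this lemma in three lines by a reduction: it replaces $\Gd$ with a Kronecker-product data $\Gd'$ having the same sketch edges $\bar{E}$, observes that any contraction tree of $(\Gd,\Ge)$ induces one of $(\Gd',\Ge)$ of no greater cost, and then invokes \cref{cor:kronecker} to get $\Omega(\sum_j s_jm + Nm^{2.5})$ directly. That reduction automatically splits every data vertex carrying several sketch edges into separate vertices, so the intra-vertex versus cross-vertex distinction that occupies most of your proposal never arises. Your first half --- projecting $T_B$ onto $\Vd$ to obtain a $T_0$ on which $T_B$ is constrained, and charging $\Omega(y_i)\geq\Omega(m^{2.5})$ to each of the $N_a-1$ contractions in $\st{S}$ via \cref{lem:lowerbound_gen} --- is sound and is the same summing step the paper itself performs in \cref{thm:lowerbound_uniform}.

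The gap is in the intra-vertex case. Your argument presumes that each of the $k_v$ sketch dimensions on a vertex $v$ is first individually reduced to size $m$ and that the $k_v-1$ mergings then occur as identifiable pairwise events, each passing through its own embedding tensor. Neither holds in general: a single Gaussian matrix of row size $m$ and column size $s_1s_2\cdots s_{k_v}$ may sketch all $k_v$ dimensions in one contraction (this satisfies the sufficient condition of \cref{thm:subgaussian_embedding}), in which case there are no ``two $m$-sized dimensions being combined'' and only one embedding tensor for $k_v-1$ mergings, so your disjointness bookkeeping breaks down. Moreover, \cref{lem:lowerbound_gen} cannot simply be ``replayed'' here, since its proof is anchored to a data--data contraction $(U_i,V_i)\in T_0$ and to the cut $\cut_{\GA}(\hat V_i,\hat U_i)$ between the two sketched operands; for dimensions living on the same data vertex there is no such contraction and no such cut to balance. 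The case can be rescued --- e.g., via \cref{lem:sketch_one_intermediate_lower}, the first contraction touching $v$ already costs $\Omega(\prod_j s_j\cdot m)=\Omega(m^{k_v+1})$, which absorbs all $k_v-1$ charges at once --- but that is a different argument from the one you sketch, and as written your proof of this case does not go through. The paper's reduction to \cref{cor:kronecker} avoids the issue entirely.
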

\begin{proof}
Let $G_D'$ be the data with the same set of sketching edges ($\bar{E}$) as $\Gd$, but $G_D'$ is a Kronecker product data. For any given contraction tree $T_B$ of $(\Gd, \Ge)$, there must exist a contraction tree of $(G_D', \Ge)$ whose asymptotic cost is upper bounded by the cost of $T_B$. Therefore, the asymptotic cost lower bound to contract $(G_D', \Ge)$ must also be the asymptotic cost lower bound to contract $(\Gd, \Ge)$.
Based on  \cref{cor:kronecker}, the asymptotic cost of $T_B$ must be lower bounded by 
\[
\bigOmega{\sum_{j= 1}^{N} s_jm + Nm^{2.5}} = \Omega(Nm^{2.5}).
\]
\end{proof}

\begin{theorem}\label{thm:lowerbound_gen}
  For any embedding $\Ge$ satisfying the $(\epsilon,\delta)$-accurate sufficient condition and any contraction tree $T_B$ of $(\Gd,\Ge)$ constrained on the data contraction tree $T_0$ expressed in \eqref{eq:exp_T0}, the sketching asymptotic cost must be lower bounded by 
\begin{equation}\label{eq:lowerbound_gen}
  \Omega\left(
\sum_{j=1}^{N} \stt{j} + 
\sum_{i\in \st{S}}a_ib_ic_id_im^2 + Nm^{2.5} + 
\sum_{i\in \st{I}}z_i
\right) ,  
\end{equation}
where $m=\Omega(N\log(1/\delta)/\epsilon^2)$, $a_i,b_i,c_i,d_i$ are expressed in \eqref{eq:label}, 
$\stt{j}$ is the optimal asymptotic cost to sketch the sub tensor network $X(e_j)$ (the sub network contracted by $\stD{j}$, also defined in \cref{tab:notations}) with an adjacent matrix in the Kronecker product embedding,
and $z_i$ is expressed in \eqref{eq:zi}.
\end{theorem}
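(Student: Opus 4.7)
The plan is to bound the sketching cost from below by summing contributions from the five groups of $T_0$-contractions, namely $\stD{1},\ldots,\stD{N},\st{S},\st{I}$, plus the global $Nm^{2.5}$ bound, and to argue these contributions come from disjoint portions of $T_B$ so that summing (not merely taking a max) is legitimate. Concretely, because $T_B$ is constrained on $T_0$, every contraction $(U_i,V_i)\in T_0$ has a unique ``realizing'' contraction $(\hat{U}_i,\hat{V}_i)\in T_B$ with $\hat{U}_i\cap \Vd = U_i$, $\hat{V}_i\cap \Vd = V_i$; and the sub-DAG of $T_B$ producing each $\hat{X}(e_j)$ (the sketched image of $X(e_j)$) is disjoint from the others and from the realizers of $\st{S}\cup \st{I}$. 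Thus I can charge costs to groups without overlap.

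The first sum $\sum_j \stt{j}$ is immediate from \cref{lem:sketch_Sj}: applied once per $j$, it lower bounds the cost of the portion of $T_B$ that assembles $\hat{X}(e_j)$ (all contractions in $\stD{j}$ plus sketching the output with an adjacent Kronecker embedding matrix). The second sum is obtained by applying \cref{lem:lowerbound_gen} once for each $i\in \st{S}$: it yields $\Omega(y_i)\ge \Omega(a_ib_ic_id_im^2)$ on the cost of the realizer of $(U_i,V_i)$ in $T_B$. The third term $\Omega(Nm^{2.5})$ is a global bound from \cref{lem:m2.5} that dominates any subset of the individual contributions by at most a constant factor, so it may be added.

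The remaining work is the $\st{I}$ sum. I would split $i\in \st{I}$ into the two defining cases. In case (a), neither $U_i$ nor $V_i$ is adjacent to $\bar E$, so no embedding vertex is needed to form $\hat{U}_i$ or $\hat{V}_i$; the realizer costs $\Theta(a_ib_ic_id_i)$, and since no embedding edge touches $U_i\cup V_i$ we have $\cut_{\GA}(U_i\cup V_i)=0$ so $z_i = a_ib_ic_id_i$ and the bound holds. In case (b), WLOG $U_i$ is adjacent to at least two edges in $\bar E$ while $V_i$ is not; then $U_i\cup V_i$ is adjacent to at least two sketch edges, so $\cut_{\GA}(U_i\cup V_i)\ge 2\log(s_j)>\log(m)$, giving $z_i = a_ib_ic_id_i\cdot m$. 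On the other hand, any realizer $(\hat{U}_i,\hat{V}_i)$ in $T_B$ must have a vertex in $\hat{U}_i$ with $\cut_{\GA}\ge\log(m)$ (some $v_j\in U_i$ adjacent to a sketch edge), so by \cref{lem:elu} $\cut_{\GA}(\hat{U}_i)\ge\log(m)$, and then \cref{lem:cost_linearization} together with $\cost_\GR(\hat{U}_i,\hat{V}_i)\ge \log(a_ib_ic_id_i)$ yields realizer cost $\Omega(a_ib_ic_id_i\cdot m)=\Omega(z_i)$. Summing across $i\in \st{I}$ completes this group.

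The main obstacle is bookkeeping: convincing oneself that the lower bounds across the four groups can be added rather than maxed. The argument rests on the observation that the realizers in $T_B$ of distinct $T_0$-contractions are pairwise distinct contractions of $T_B$, and the sketching subcomputations for the various $X(e_j)$ use contractions of $T_B$ disjoint from the realizers of $\st{S}\cup\st{I}$ (by definition of $\stD{j}$). Once this disjointness is made precise, the four disjoint lower bounds on disjoint subsets of $T_B$'s cost add up, and the global $Nm^{2.5}$ bound is absorbed into the sum. A minor technicality I would verify is that in case (b) of $\st{I}$ the factor $m$ arises without invoking the $O(\sqrt m)$ binary-tree blowup of $y_i$, since $\st{I}$-contractions do not themselves require sketching the output — only that the already-sketched input dimension of size $m$ participates in the contraction.
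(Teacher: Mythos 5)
Your proposal is correct and follows essentially the same decomposition as the paper's proof: the $\stt{j}$ terms via \cref{lem:sketch_Sj}, the $Nm^{2.5}$ term via \cref{lem:m2.5}, the $\st{S}$ and $\st{I}$ terms by bounding the cost of each realizing contraction $(\hat U_i,\hat V_i)$ in $T_B$ through \cref{lem:elu} and \cref{lem:cost_linearization}, and the (implicit in the paper, explicit in your write-up) observation that these charges land on disjoint contractions of $T_B$ so they may be summed. The one patch needed: for $i\in\st{S}$ you invoke \cref{lem:lowerbound_gen}, whose proof relies on \cref{lem:w_cut} and hence on every data vertex being adjacent to a sketch edge, which fails for general $\Gd$; since you only use the consequence $\Omega(a_ib_ic_id_im^2)$, derive it directly instead --- for $i\in\st{S}$ both $U_i$ and $V_i$ contain a vertex adjacent to an edge of $\bar E$ of size greater than $m$, so $\cut_{\GA}(\hat U_i),\cut_{\GA}(\hat V_i)\ge\log(m)$ by \cref{lem:elu} and the realizer costs $\Omega(a_ib_ic_id_im^2)$ by \cref{lem:cost_linearization}, which is exactly how the paper argues.
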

\begin{proof}
The term $\sum_{j=1}^N \stt{j}$ can be proven based on \cref{lem:sketch_Sj}, and the term $Nm^{2.5}$ with $m=\Omega(N\log(1/\delta)/\epsilon^2)$ can be proven based on \cref{lem:m2.5}. Below we show the asymptotic cost is also lower bounded by $\Omega(\sum_{i\in \st{S}}a_ib_ic_id_im^2 + \sum_{i \in \st{I}} z_i)$, thus finishing the proof.

For each contraction $(U_i,V_i)$ in $T_0$ with $i\in \st{S}\cup \st{I}$, there must exist a contraction $(\hat{U}_i, \hat{V}_i)$ in $T_B$, and $\hat{U}_i \cap \Vd = U_i$, $\hat{V}_i\cap\Vd = V_i$. For the case where $i\in \st{S}$, since both $U_i$ and $V_i$ contain edges to be sketched, 
we have $\cut_{\GA}(\hat{U}_i) \geq \log(m)$ and $\cut_{\GA}(\hat{V}_i) \geq \log(m)$ based on \cref{lem:elu}. Therefore, we have
\begin{align}
   \sum_{i\in \st{S}} \cost_G(\hat{U}_i, \hat{V}_i) &= \sum_{i\in \st{S}} \cost_{\GR}(\hat{U}_i, \hat{V}_i) + \cost_{\GL}(\hat{U}_i, \hat{V}_i) \nonumber \\
    &= \sum_{i\in \st{S}} \cost_{\GR}({U}_i, {V}_i) + \cost_{\GL}(\hat{U}_i, \hat{V}_i) \nonumber \\
    & \geq \sum_{i\in \st{S}} \cost_{\GR}({U}_i, {V}_i) + \cut_{\GA}(\hat{U}_i) + \cut_{\GA}(\hat{V}_i) \nonumber \\
    & \geq \sum_{i\in \st{S}} \log(a_ib_ic_id_im^2),
\end{align}
where the first inequality above holds based on \cref{lem:cost_linearization}.
This shows the cost is lower bounded by $\Omega(\sum_{i\in \st{S}}a_ib_ic_id_im^2)$. 

Now consider the case where $i\in \st{I}$. In this case, either $\cut_{\GA}(U_i \cup {V}_i) = 0$ or $\cut_{\GA}(U_i \cup {V}_i) \geq \log(m)$. When $\cut_{\GA}(U_i \cup {V}_i) = 0$, we have $\cut_{\GA}(\hat{U}_i \cup \hat{V}_i) \geq  \cut_{\GA}(U_i \cup {V}_i)$. When $\cut_{\GA}(U_i \cup {V}_i) \geq \log(m)$, 
based on \cref{lem:elu}, we have $\cut_{\GA}(\hat{U}_i \cup \hat{V}_i) \geq \log(m)$. To summarize, we have
\[
\cut_{\GA}(\hat{U}_i \cup \hat{V}_i) \geq  \min\left(\cut_{\GA}(U_i \cup {V}_i), \log(m)\right),
\]
thus
\begin{align}
   \sum_{i\in \st{I}} \cost_G(\hat{U}_i, \hat{V}_i) &= \sum_{i\in \st{I}} \cost_{\GR}(\hat{U}_i, \hat{V}_i) + \cost_{\GL}(\hat{U}_i, \hat{V}_i) \nonumber \\
    & \geq \sum_{i\in \st{I}} \cost_{\GR}({U}_i, {V}_i) + \cut_{\GA}(\hat{U}_i) + \cut_{\GA}(\hat{V}_i) \nonumber \\
    & \geq \sum_{i\in \st{I}} \cost_{\GR}({U}_i, {V}_i) + \cut_{\GA}(\hat{U}_i\cup\hat{V}_i) \nonumber \\
    & \geq \sum_{i\in \st{I}} \log(a_ib_ic_id_i)+ \min\left(\cut_{\GA}(U_i\cup V_i), \log(m)\right)\nonumber\\
    & = \sum_{i\in \st{I}}\log(z_i).
\end{align}
This shows the sketching cost is lower bounded by $\Omega\left(\sum_{i\in \st{I}}z_i\right)$, thus finishing the proof.
\end{proof}

\begin{proof}[Proof of \cref{thm:approx_factor}]
Based on \cref{thm:complexity}, the computational cost of \cref{alg:contract_sketch} is 
\[
\alpha = 
  \Theta\left(
\sum_{j=1}^{N} \stt{j} + 
\sum_{i\in \st{S}}y_i + 
\sum_{i\in \st{I}}z_i
\right).
\]
Let $\beta$ equals the expression in \eqref{eq:lowerbound_gen}. We have 
\begin{align*}
  \frac{\alpha}{\beta} &=  \frac{
\Theta\left(
\sum_{j=1}^{N} \stt{j} + 
\sum_{i\in \st{S}}y_i + 
\sum_{i\in \st{I}}z_i
\right)
}{
\Omega\left(
\sum_{j=1}^{N} \stt{j} + 
\sum_{i\in \st{S}}(a_ib_ic_id_im^2 + m^{2.5}) + 
\sum_{i\in \st{I}}z_i
\right)
}
= \bigO{\frac{
\sum_{i\in \st{S}}y_i
}{
\sum_{i\in \st{S}}(a_ib_ic_id_im^2 + m^{2.5})
}}   \\
& = \bigO{\max_{i\in \st{S}}\frac{
y_i
}{
a_ib_ic_id_im^2 + m^{2.5}
}} 
= \bigO{\max_{i\in \st{S}}\frac{
a_ib_ic_id_im^2 + m^2d_i\sqrt{a_ib_ic_im}\cdot\min(\sqrt{a_i},\sqrt{c_i})
}{
a_ib_ic_id_im^2 + m^{2.5}
}} \\
& = O(1) + \bigO{\max_{i\in \st{S}}\frac{ m^2d_i\sqrt{a_ib_ic_im}\cdot\min(\sqrt{a_i},\sqrt{c_i})
}{
a_ib_ic_id_im^2 + m^{2.5}
}}.
\end{align*}
Below we derive asymptotic upper bound of the term $\theta = \frac{ m^2d_i\sqrt{a_ib_ic_im}\cdot\min(\sqrt{a_i},\sqrt{c_i})
}{
a_ib_ic_id_im^2 + m^{2.5}
}$. We analyze the case below with $a_i\leq c_i$,
and the other case with $a_i > c_i$ can be analyzed in a similar way based on the symmetry of $a_i,c_i$ in $\theta$.

When $a_i\leq c_i$, we have $m^2d_i\sqrt{a_ib_ic_im}\cdot\min(\sqrt{a_i},\sqrt{c_i}) = a_im^2d_i\sqrt{b_ic_im}$. We consider two cases, one satisfies $\sqrt{b_ic_im} \leq b_ic_i$ and the other satisfies $\sqrt{b_ic_im} > b_ic_i$.

When $\sqrt{b_ic_im} \leq b_ic_i$, we have $\theta\leq 1$, thus $\frac{\alpha}{\beta} = O(1)$, thus satisfying the theorem statement. 

When $\sqrt{b_ic_im} > b_ic_i$, which means that ${m}> {b_ic_i}$, we have 
\begin{align*}
  \theta &= 
\frac{a_im^2d_i\sqrt{b_ic_im}}{a_ib_ic_id_im^2 + m^{2.5} } \leq 
\min\left(
\frac{\sqrt{m}}{\sqrt{b_ic_i}}, 
a_id_i\sqrt{b_ic_i}
\right)
\leq \sqrt{m},
\end{align*}
thus $
\frac{\alpha}{\beta} \leq O\left(\sqrt{m}\right).
$
In addition, when $\Gd$ is a graph, we have $d_i=1$ for all $i$. Therefore, 
\begin{align*}
    \theta &\leq \min\left(
\frac{\sqrt{m}}{\sqrt{b_ic_i}}, 
a_id_i\sqrt{b_ic_i}
\right) = \min\left(
\frac{\sqrt{m}}{\sqrt{b_ic_i}}, 
a_i\sqrt{b_ic_i}
\right) \\
&\leq \min\left(
\frac{\sqrt{m}}{\sqrt{b_ic_i}}, 
c_i\sqrt{b_ic_i}
\right) \leq 
\min\left(
\frac{\sqrt{m}}{(b_ic_i)^{1/2}}, 
(b_ic_i)^{3/2}
\right) \leq m^{0.375}.
\end{align*}
Therefore, in this case we have $
\frac{\alpha}{\beta} \leq O\left(m^{0.375}\right),
$
thus finishing the proof.
\end{proof}

\section{Analysis of tree tensor network embeddings}\label{sec:appendix_tree} 

In this section, we provide detailed analysis of sketching with tree embeddings. The algorithm to sketch with tree embedding is similar to \cref{alg:contract_sketch}, and the only difference is that for each contraction $(U_i,V_i)$ with $i\in \st{S}$, 
such that both $U_i$ and $V_i$ are adjacent to edges in $\bar{E}$, we sketch it with one embedding tensor $z_i$  rather than a small network. 
Let $\hat{U}_i,\hat{V}_i$ denote the sketched $U_i$ and $V_i$ formed in previous contractions in the sketching contraction tree $T_B$, such that $\hat{U}_i\cap \Vd = U_i$ and $\hat{V}_i\cap \Vd = V_i$, we sketch $(\hat{U}_i,\hat{V}_i)$ 
via the contraction path $((\hat{U}_i,\hat{V}_i), z_i)$. 
For the case where each vertex in the data tensor network is adjacent to an edge to be sketched, the sketching cost would be 
\begin{align}\label{eq:cost_uniform_tree}
\Theta\left(
\sum_{j=1}^{N} \exp(\cut_G(v_j))\cdot m + 
\sum_{j=1}^{N-1}(a_ib_ic_id_im^2 + a_ic_id_im^3)
\right) ,
\end{align}
where $v_j$ is the vertex in the data graph adjacent to $e_j$, $a_i,b_i,c_i,d_i$ are defined in \eqref{eq:label}, and
we replace the term $y_i = a_ib_ic_id_im^2 + m^2d_i\sqrt{a_ib_ic_im}\cdot\min(\sqrt{a_i},\sqrt{c_i})$ in \eqref{eq:cost_uniform} with $a_ib_ic_id_im^2 + a_ic_id_im^3$. 

\begin{proof}[Proof of \cref{thm:tree_optimal}]
Since each contraction in $T_0$ contracts dimensions with size being at least the sketch size $m$, we have $b_i \geq m$ for $i\in \st{S}$.
Therefore, 
\[
m^2d_i\sqrt{a_ib_ic_im}\cdot\min(\sqrt{a_i},\sqrt{c_i})\leq a_im^2d_i\sqrt{b_ic_im}\leq a_ib_id_im^2\sqrt{c_i}\leq a_ib_ic_id_im^2,
\]
and the asymptotic cost in \eqref{eq:cost_uniform} would be 
\begin{equation}\label{eq:tree_cost}
   \Theta\left(
\sum_{j=1}^{N} \exp(\cut_G(v_j))\cdot m + 
\sum_{j=1}^{N-1}a_ib_ic_id_im^2
\right) . 
\end{equation}
Based on \cref{thm:lowerbound_uniform}, \eqref{eq:tree_cost} matches the sketching asymptotic cost lower bound for this data. Since $a_ic_id_im^3\leq a_ib_ic_id_im^2$ so \eqref{eq:cost_uniform_tree} equals \eqref{eq:tree_cost}, 
sketching with tree embeddings also yield the optimal asymptotic cost.
\end{proof}

When
the data has a Kronecker product structure, 
sketching with tree tensor network embedding is less efficient compared to \cref{alg:contract_sketch}. 
As is shown in \eqref{eq:sketchkronecker_tree}, \cref{alg:contract_sketch} yields a cost of  $\Theta\left(
\sum_{j= 1}^{N} s_jm + 
Nm^{2.5}
\right)$
to sketch the Kronecker product data. However, for tree embeddings, the asymptotic cost
\eqref{eq:cost_uniform_tree} is equal to
\begin{align}\label{eq:sketchkronecker_tree}
\Theta\left(
\sum_{j= 1}^{N} s_jm + 
Nm^{3}
\right).
\end{align}

\section{Computational cost analysis of sketched CP-ALS}\label{sec:cp}

In this section, we provide detailed computational cost analysis of the sketched CP-ALS algorithm based on \cref{alg:contract_sketch}.
We are given a tensor $\tsr{X}\in \R^{s\times \cdots \times s}$, and aim to decompose that into $N$ factor matrices, $A_i\in \R^{s\times R}$ for $i\in [N]$. 
Let $L_i= \mat{A}_1 \odot \cdots \odot  \mat{A}_{i-1}  \odot  \mat{A}_{i+1}\odot \cdots \odot \mat{A}_N$ and 
$R_i=\mat{X}_{(i)}^T$. In each iteration, we aim to update $A_i$ via solving a sketched linear least squares problem, 
$\mat{A}_i = \argmin{\mat{A}} \left\|S_iL_iA^T - S_iR_i\right\|_F^2,$
where $S_i$ is an embedding constructed based on \cref{alg:contract_sketch}. 

Below we first discuss the sketch size of $S_i$ sufficient to make each sketched least squares problem accurate. We then discuss the contraction trees of $L_i$, on top of which embedding structures are determined. We select contraction trees such that contraction intermediates can be reused across subproblems. Finally,
we present the detailed computational cost analysis of the sketched CP-ALS algorithm.

\subsection{Sketch size sufficient for accurate least squares subproblem}\label{subsec:cp_sketchsize}
Since the tensor network of $L_i$ contains $N$ output dimensions and $L_i$ contains $R$ columns, we show below that a sketch size of $\Theta(NR\log(1/\delta)/\epsilon^2)= \Tilde{\Theta}(NR/\epsilon^2)$ is sufficient for the least squares problem to be $(\epsilon,\delta)$-accurate.

\begin{theorem}\label{thm:mps_lsq}
Consider the sketched linear least squares problem
$\min_{\mat{A}} \left\|S_iL_iA^T - S_iR_i\right\|_F^2$. Let $\mat{S}_i$ be an embedding constructed based on \cref{alg:contract_sketch}, with the sketch size $m =\Theta(NR\log(1/\delta)/\epsilon^2)$, solving the sketched least squares problem gives us an $(1+\epsilon)$-accurate solution with probability at least $1-\delta$.
\end{theorem}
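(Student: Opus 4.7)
The plan is to reduce the statement to a subspace embedding guarantee and then apply \cref{thm:subgaussian_embedding} via a standard net-covering argument. First, I would invoke the classical reduction (as in Woodruff's survey) from sketched least squares to subspace embedding: solving $\min_A \|S_i L_i A^T - S_i R_i\|_F^2$ yields an $(1+\epsilon)$-accurate solution for the original problem $\min_A \|L_i A^T - R_i\|_F^2$ provided that $S_i$ is an $O(\epsilon)$-subspace embedding for the column space of the augmented matrix $[L_i \mid R_i]$ restricted to the relevant $(R+1)$-dimensional subspace containing $\mathrm{col}(L_i)$ and the optimal residual direction (equivalently, $S_i$ satisfies the subspace embedding property on $\mathrm{col}(L_i)$ together with an approximate matrix product bound, both implied by an $O(\epsilon)$-subspace embedding for a fixed subspace of dimension $O(R)$).

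Next I would use a standard $\gamma$-net argument: to make $S_i$ an $\epsilon$-subspace embedding for a fixed subspace of dimension $d = O(R)$ with failure probability $\delta$, it suffices that $S_i$ satisfies the single-vector $(\epsilon/C, \delta')$-accurate property with $\delta' = \delta \cdot e^{-C' d}$ on each point of a $\tfrac{1}{4}$-net of the unit sphere of that subspace, which has size $9^{d}$, and then union-bound. Plugging $d = O(R)$ in, the required per-vector failure probability becomes $\delta' = \delta \cdot e^{-O(R)}$, so the logarithmic factor in the accuracy requirement becomes $\log(1/\delta') = O(R + \log(1/\delta)) = O(R\log(1/\delta))$.

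Now I would apply \cref{thm:subgaussian_embedding} to $S_i$. By construction in \cref{alg:contract_sketch}, the embedding for the data tensor network $L_i$ (which has $N$ sketching edges) consists of $\tilde{N}_E = \Theta(N)$ Gaussian tensors admitting a linearization. Hence the sufficient condition demands each linearized matricization to have row size $\Omega(\tilde{N}_E \log(1/\delta')/\epsilon^2) = \Omega(N R \log(1/\delta)/\epsilon^2)$, which matches the sketch size $m = \Theta(NR\log(1/\delta)/\epsilon^2)$ stated in the theorem. This shows $S_i$ is $(\epsilon/C,\delta')$-accurate per vector, hence an $O(\epsilon)$-subspace embedding for the relevant $O(R)$-dimensional subspace with probability $\geq 1-\delta$, and consequently the solution to the sketched least squares is $(1+\epsilon)$-accurate.

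The main obstacle will be the bookkeeping of constants and the careful verification that the embedding produced by \cref{alg:contract_sketch}, when the target output sketch size is enlarged from $\Theta(N\log(1/\delta)/\epsilon^2)$ to $\Theta(NR\log(1/\delta)/\epsilon^2)$, still satisfies the hypothesis of \cref{thm:subgaussian_embedding} for the stronger per-vector accuracy $\epsilon' = \epsilon/C$ and failure probability $\delta' = \delta e^{-O(R)}$; in particular, that every matricization $\mat{A}_i$ in the linearization of the constructed embedding indeed has row size $\Omega(\tilde{N}_E \log(1/\delta')/(\epsilon')^2)$ so that the composition rules (\cref{lem:kronecker_jl} and \cref{lem:composition_strongjl}) go through. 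Once this bookkeeping is in place, the rest is a direct chain of the three ingredients above.
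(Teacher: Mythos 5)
Your proposal is correct and follows essentially the same route as the paper's proof: boost the per-vector accuracy of \cref{thm:subgaussian_embedding} to failure probability $\delta e^{-O(R)}$ (which is where the extra factor of $R$ in the sketch size comes from), then use the standard $\epsilon$-net argument to obtain a subspace embedding for the relevant $O(R)$-dimensional subspace, and conclude via the classical reduction from sketched least squares to subspace embedding. The paper's version is just a terser statement of the same three steps, so no further comparison is needed.
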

\begin{proof}
\cref{alg:contract_sketch} outputs an embedding with $\Theta(N)$ vertices. Based on \cref{thm:subgaussian_embedding}, a sketched size of $\Theta(NR\log(1/\delta)/\epsilon^2)$ will make the embedding $(\epsilon,\frac{\delta}{e^R})$-accurate.
Based on the $\epsilon$-net argument~\cite{woodruff2014sketching}, $\mat{S}$ is the $(\epsilon,\delta)$-accurate subspace embedding for a subspace with dimension $R$. Therefore, we can get an $(1+\epsilon)$-accurate solution with probability at least $1-\delta$ for the least squares problem.
 \end{proof}

\subsection{Data contraction trees and efficient embedding structures}\label{subsec:cp_contractiontree}

The structures of embeddings $S_1,\ldots,S_N$ also depend on the data contraction trees for $L_1,\ldots,L_N$. We denote the contraction tree of $L_i$ as $T_{i}$.
We construct $T_i$ for $i\in [N]$
such that resulting embeddings $S_1,\ldots,S_N$ have common parts, which yields more efficient sketching computational cost via reusing contraction intermediates.

Let the vertex $v_i$ represent the matrix $A_i$. We also let $V_L^{(i)} = \{v_1,\ldots, v_i\}$ denote the set of all first $i$ vertices, and let $V_R^{(i)} = \{v_i,\ldots, v_N\}$ denote the set of vertices from $v_i$ to $v_N$.
In addition, we let $\st{C}_L^{(i)}$ denote a contraction tree to fully contract $V_L^{(i)}$, from $v_1$ to $v_i$. Let $\st{C}_L^{(1)} = \emptyset$, we have for all $i\geq 1$, $\st{C}_L^{(i+1)} = \st{C}_L^{(i)} \cup \left\{(V_L^{(i)}, v_{i+1})\right\}$.
Similarly, we let $\st{C}_R^{(i)}$ denote a contraction tree to fully contract $V_R^{(i)}$, from $v_N$ to $v_i$. Let $\st{C}_R^{(N)} = \emptyset$, we have for all $i\leq N$, $\st{C}_R^{(i-1)} = \st{C}_R^{(i)} \cup \left\{(V_R^{(i)}, v_{i-1})\right\}$.

\begin{figure}[!ht]
\centering

\subfloat[]{\includegraphics[width=0.2\textwidth, keepaspectratio]{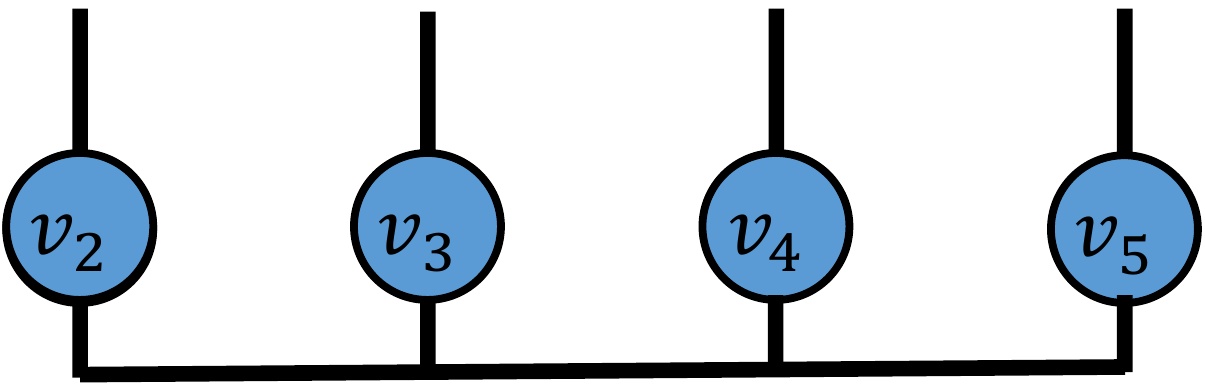}}
\subfloat[]{\includegraphics[width=0.2\textwidth, keepaspectratio]{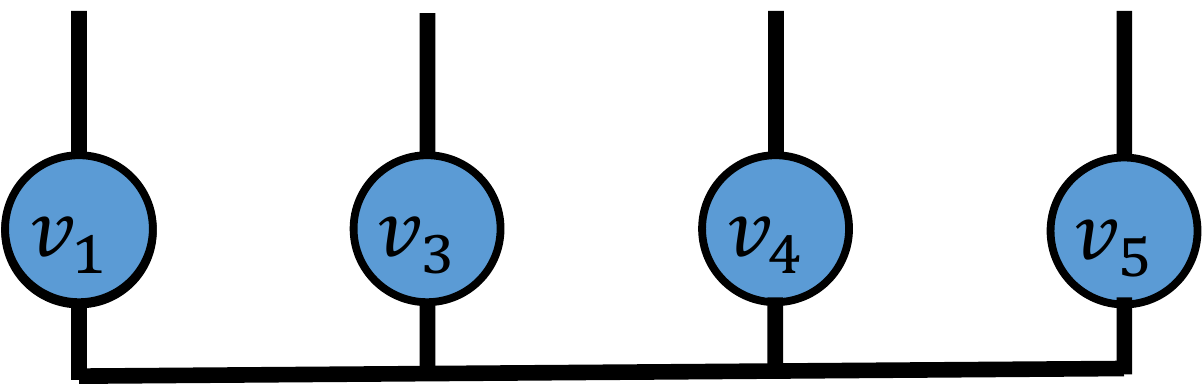}}
\subfloat[]{\includegraphics[width=0.2\textwidth, keepaspectratio]{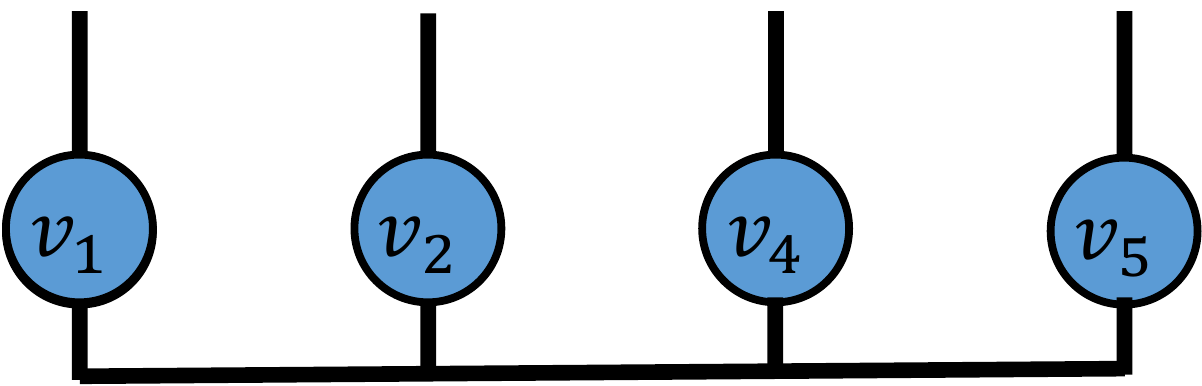}}
\subfloat[]{\includegraphics[width=0.2\textwidth, keepaspectratio]{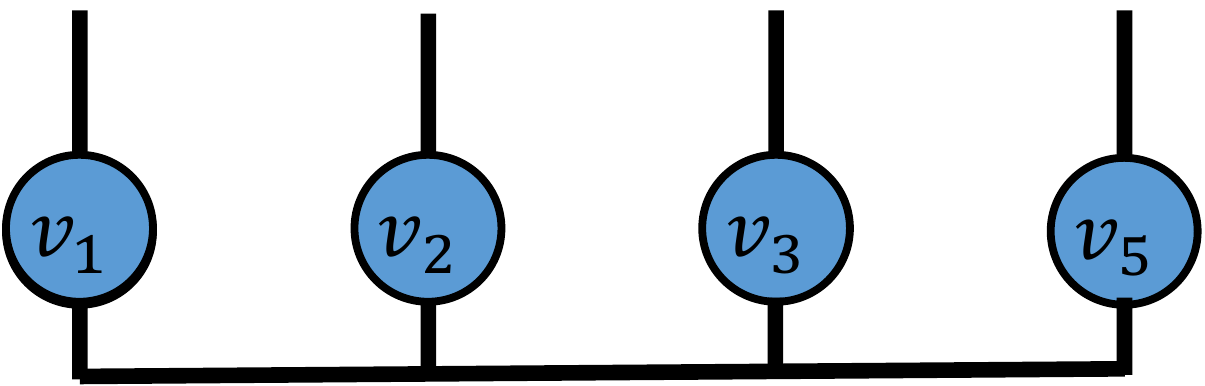}}
\subfloat[]{\includegraphics[width=0.2\textwidth, keepaspectratio]{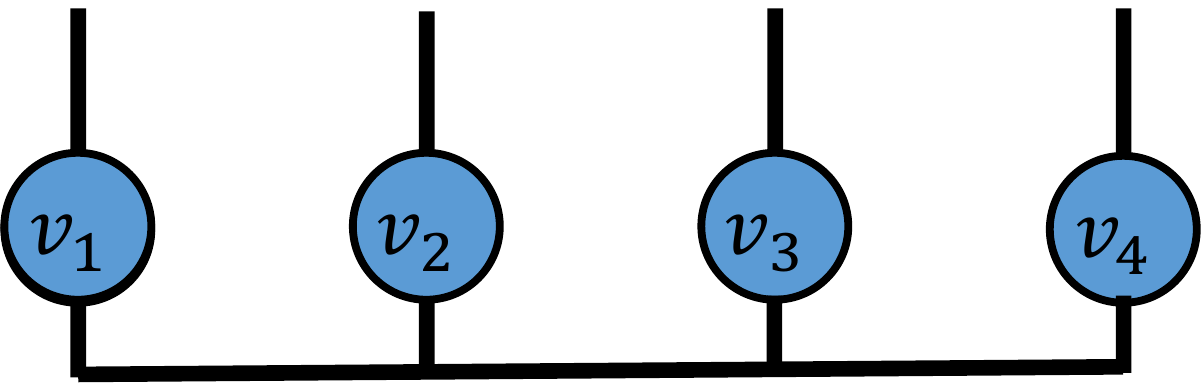}}

\subfloat[]{\includegraphics[width=0.2\textwidth, keepaspectratio]{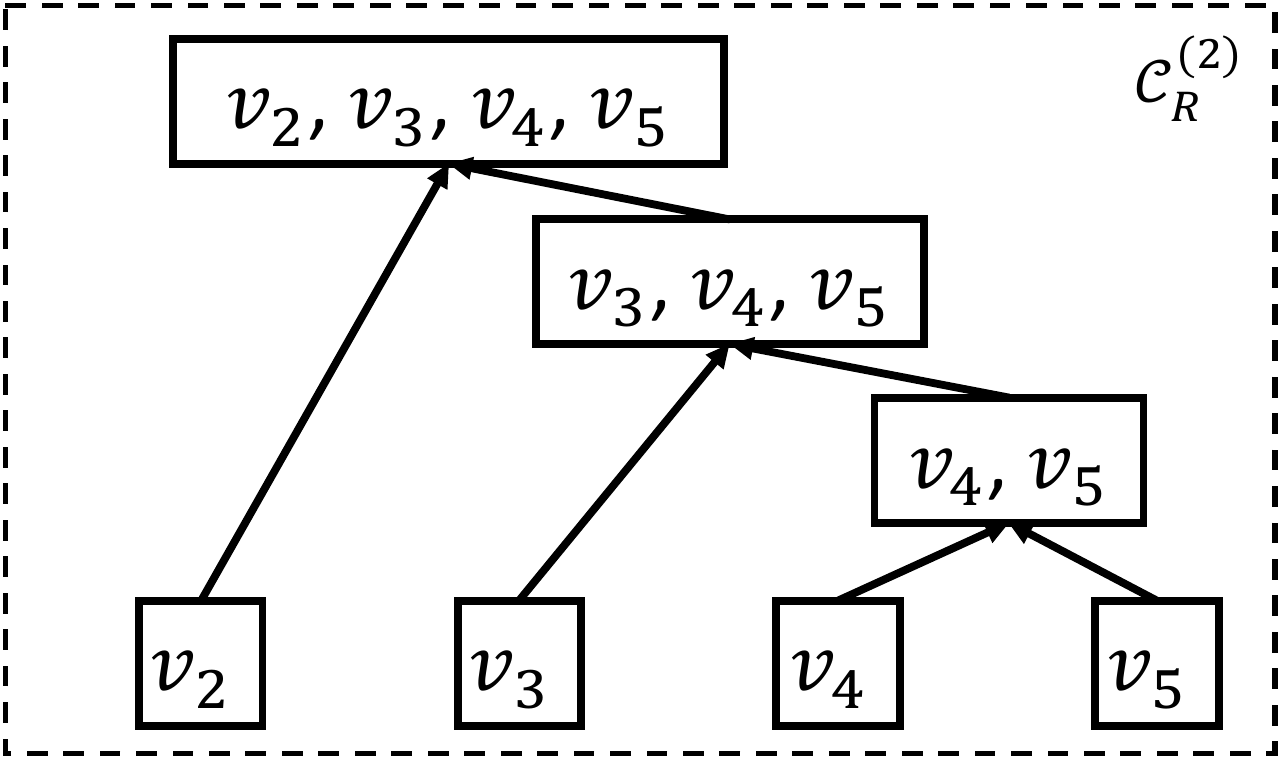}}
\subfloat[]{\includegraphics[width=0.2\textwidth, keepaspectratio]{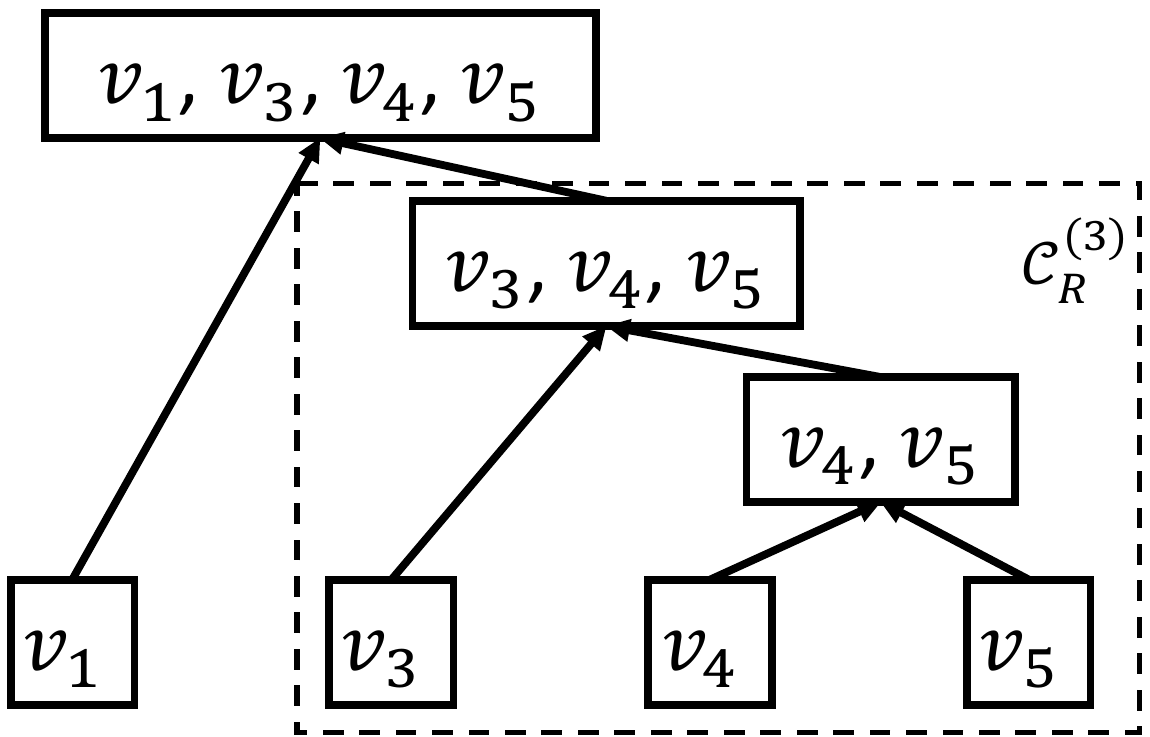}}
\subfloat[]{\includegraphics[width=0.2\textwidth, keepaspectratio]{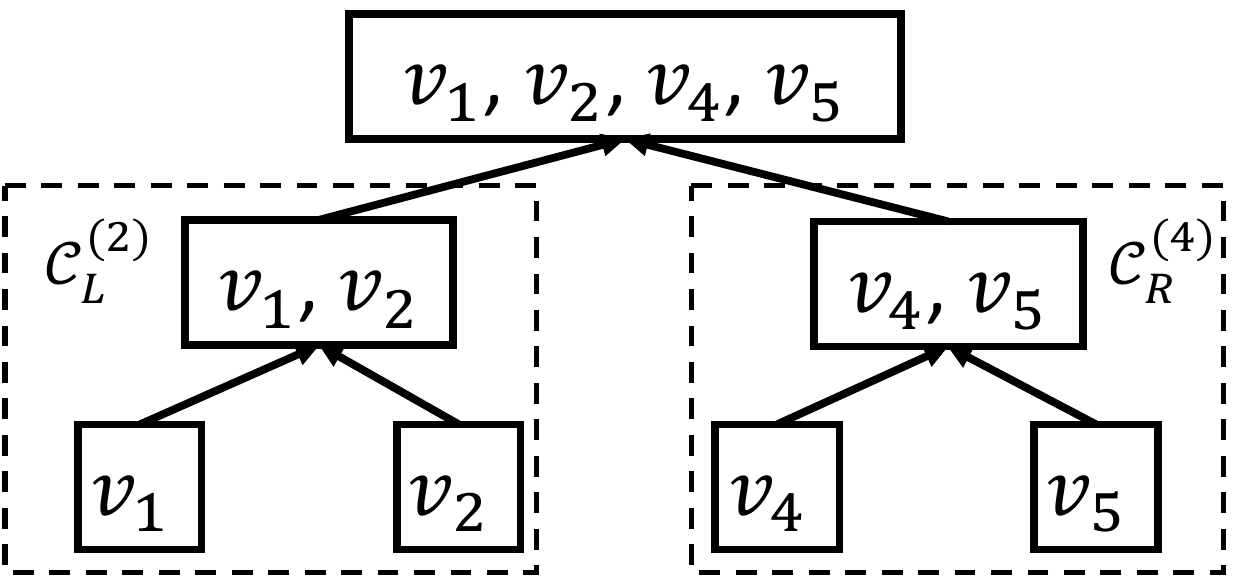}}
\subfloat[]{\includegraphics[width=0.2\textwidth, keepaspectratio]{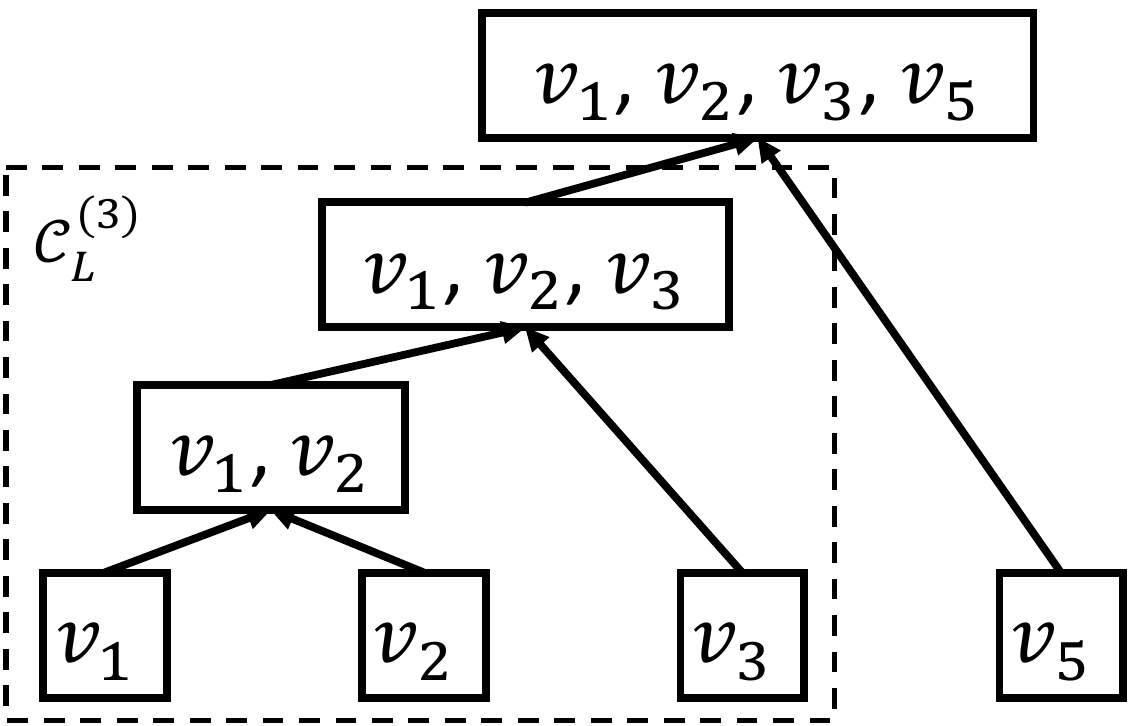}}
\subfloat[]{\includegraphics[width=0.2\textwidth, keepaspectratio]{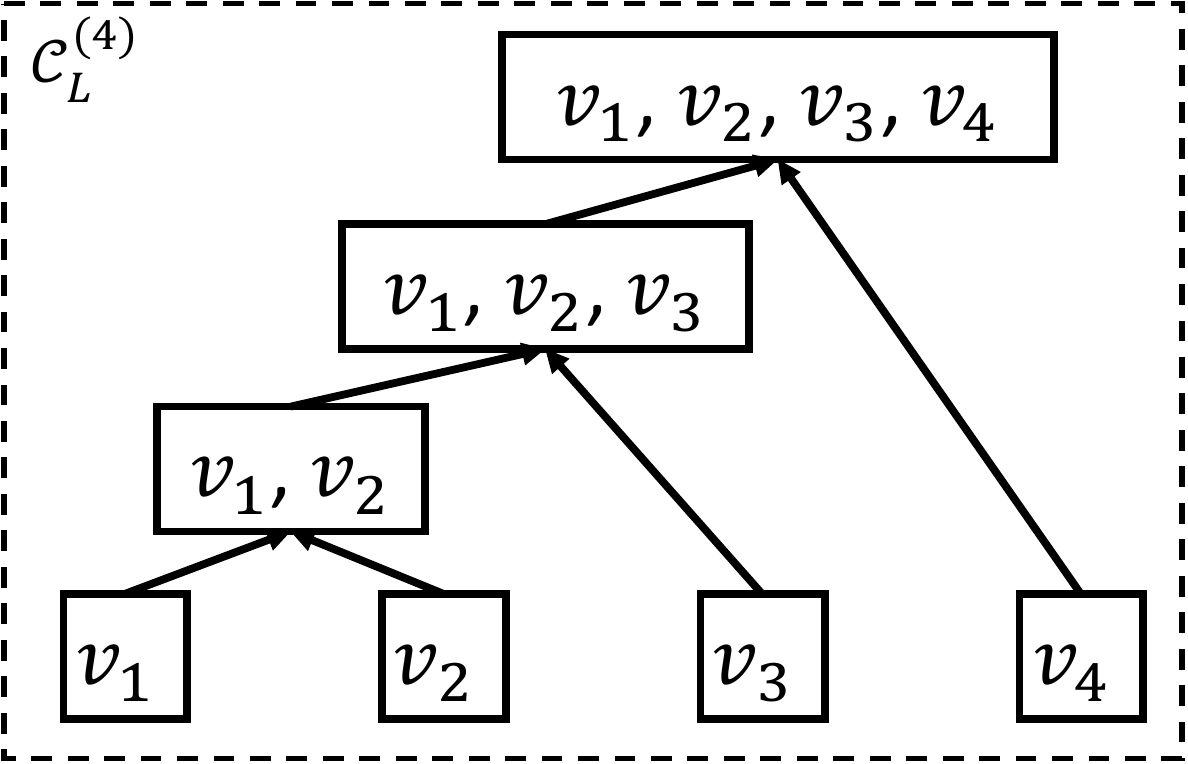}}

\caption{(a)-(e): Representations of $L_1,\ldots, L_5$ for the CP decomposition of an order 5 tensor. (f)-(j): Data dimension trees $T_1,\ldots, T_5$. 
}
\label{fig:cpd_contraction_trees}
\end{figure}

Note that the vertex set of the tensor network of $L_i$ is $V_L^{(i-1)}\cup V_R^{(i+1)}$. 
Each $T_i$ is constructed so that $V_L^{(i-1)},V_R^{(i+1)}$ are first contracted via the contraction trees $\st{C}_L^{(i-1)}$, $\st{C}_R^{(i+1)}$, respectively, then a contraction of $(V_L^{(i-1)},V_R^{(i+1)})$ is used to contract them into a single tensor. We illustrate $T_i$ for the CP decomposition of an order 5 tensor in \cref{fig:cpd_contraction_trees}.

These tree structures allow us to reuse contraction intermediates during sketching.
On top of $T_1$, sketching $L_1$ using \cref{alg:contract_sketch} yields a cost of $\Theta(N(smR + m^{2.5}R))$, where the term $\Theta(NsmR)$ comes from sketching with the Kronecker product embedding, and the term $\Theta(Nm^{2.5}R)$ comes from 
sketching each data contraction in $\st{C}_R^{(2)}$.
Since  $\st{C}_R^{(2)} = \st{C}_R^{(3)} \cup \left\{(V_R^{(3)}, v_{2})\right\}$, all contractions in $\st{C}_R^{(3)}$ are sketched, and we obtain
the sketching output of $V_R^{(3)}$, which is denoted as $\hat{V}_R^{(3)}$ below.

We use $\hat{V}_R^{(3)}$ formed during sketching $L_1$ to sketch $L_2$. Since $T_2$ contains contractions
\begin{align*}
   T_{2} &= \st{C}_R^{(3)}\cup
\st{C}_L^{(1)}\cup
\left\{(V_L^{(1)},V_R^{(3)})\right\}  
= \st{C}_R^{(3)}
\cup 
\left\{(v_{1},V_R^{(3)})\right\},
\end{align*}
through reusing $\hat{V}_R^{(3)}$, we only need to sketch $(v_{1},\hat{V}_R^{(3)})$ to compute $S_2L_2$, which 
only costs $\Theta(smR + m^{2.5}R)$.
Similarly, sketching each $L_i$ for $i\geq 2$ only costs $\Theta(smR + m^{2.5}R)$, thus making the overall cost of sketching $L_1,\ldots,L_N$ being $\Theta(N(smR + m^{2.5}R))$.

\subsection{Detailed algorithm and the overall computational cost}\label{subsec:cp_algorithm}

\begin{algorithm}[!ht]
\caption{\textbf{Sketched-ALS}: Sketched ALS for CP decomposition}\label{alg:sketchals}
\begin{algorithmic}[1]
\STATE{\textbf{Input: }Input tensor $\tsr{X}$, initializations $\mat{A}_1,\ldots,\mat{A}_N$,
maximum number of iterations $I_{\text{max}}$
}
\STATE{$G_{D}(L_i)\leftarrow $ structure of the data $L_i =  \mat{A}_1 \odot \cdots \odot  \mat{A}_{i-1}  \odot  \mat{A}_{i+1}\odot \cdots \odot \mat{A}_N$ for $i\in [N]$ }
\STATE{$T_{i}\leftarrow $ contraction tree of $G_{D}(L_i)$ for $i\in [N]$ constructed based on  \cref{subsec:cp_contractiontree} }
\STATE{Build tensor network embeddings $S_i$ on $G_{D}(L_i)$ and $T_i$ based on \cref{alg:contract_sketch} for $i\in[N]$\label{line:cp_buildS}}
\STATE{Compute $\hat{R}_i \leftarrow S_i\mat{X}_{(i)}^T$ for $i\in[N]$\label{line:cp_rhs}}
\FOR{{$t\in [I_{\text{max}}] $}} {
	\FOR{{$i\in [N] $}\label{line:cpiteration_start}} {
   \STATE{Compute $\hat{L}_i\leftarrow S_iL_i$\label{line:cp_lhs}}
    \STATE{$ \mat{A}_i\leftarrow  \argmin{\mat{X}} \left\|\hat{L}_iX - \hat{R}_i\right\|_F^2$\label{line:cp_solve}}
	}\ENDFOR\label{line:cpiteration_end}
}\ENDFOR
\RETURN $\mat{A}_1,\ldots,\mat{A}_N$ 
\end{algorithmic}
\end{algorithm}

We present the detailed sketched CP-ALS algorithm in \cref{alg:sketchals}. Here we analyze the overall computational cost of the algorithm. 

Line~\ref{line:cp_rhs} yields a preparation cost of the algorithm. Note that we construct $S_i$ based on \cref{subsec:cp_contractiontree}, where they share common tensors. Contracting $S_1X_{(1)}^T$ yields a cost of $\Theta(s^Nm)$. On top of that, contracting $S_iX_{(i)}^T$ for $i\geq 2$ also only yields a cost of $\Theta(s^Nm)$, making the overall preparation cost $\Theta(s^Nm)$.

Within each ALS iteration (Lines \ref{line:cpiteration_start}-\ref{line:cpiteration_end}), based on \cref{subsec:cp_contractiontree}, computing $S_iL_i$ for $i\in[N]$ costs $\Theta(N(smR + m^{2.5}R))$. For each $i\in [N]$, line \ref{line:cp_solve} costs $\Theta(mR^2)$, making the cost of per-iteration least squares solves $\Theta(NmR^2)$. 
Based on \cref{subsec:cp_sketchsize}, a sketch size of $m= \Tilde{\Theta}(NR/\epsilon^2)$ is sufficient for the least squares solution to be $(1+\epsilon)$-accurate with probability at least $1-\delta$. 
Overall, the per-iteration cost is $\Theta(N(smR + m^{2.5}R)) =  \Tilde{\Theta}(N^2(N^{1.5}R^{3.5}/\epsilon^3 + sR^2)/\epsilon^2)$.

\section{Computational cost analysis of sketching for tensor train rounding}\label{sec:tt}
We provide the computational cost lower bound analysis of computing $SX$, where $\mat{X}$ denotes a matricization of  the tensor train data shown in \cref{fig:ttX}.
This step is the computational bottleneck of the tensor train  randomized rounding algorithm proposed in 
\cite{daas2021randomized}.
As is discussed in 
\cref{sec:app}, we assume 
the tensor train has order $N$ with the output dimension sizes equal $s$, the tensor train rank is $R<s$, and the goal is to round the rank to $r<R$. 
The sketch size $m$ of $S$ is $r$ plus some constant, and is assumed to be smaller than $R$. The lower bound is derived within all embeddings satisfying the sufficient condition in \cref{thm:subgaussian_embedding} and only have one output sketch dimension with size $m$.

\begin{figure}
\centering
\includegraphics[width=.4\textwidth, keepaspectratio]{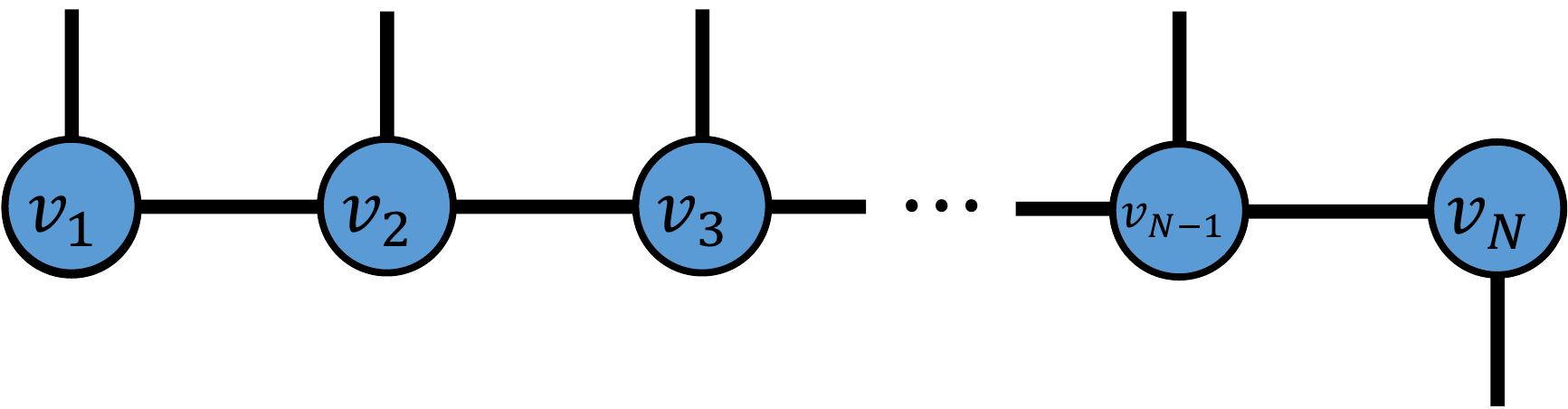}
  \caption{Illustration of the matricization of the tensor train ($X$) to be sketched. The $N-1$ uncontracted edges adjacent to $v_1,\ldots, v_{N-1}$ are to be sketched.
  }
  \label{fig:ttX}
\end{figure} 

For the data contraction tree that contracts the tensor train shown in \cref{fig:ttX} from left to right, we have $a_i = 1, b_i = R, c_i = R, d_i=1$ for $i\in [N-2]$, where $a_i,b_i,c_i,d_i$ are expressed in \eqref{eq:label}. Based on \cref{thm:lowerbound_gen}, the sketching asymptotic cost lower bound is 
\begin{align*}
  \Omega\Bigg(
\sum_{j=1}^{N-1} \stt{j} + 
\sum_{i\in \st{S}}a_ib_ic_id_im^2 + Nm^{2.5} + 
&\sum_{i\in \st{I}}z_i
\Bigg)
=\Omega\left(
\sum_{j=1}^{N-1} \stt{j} + 
\sum_{i\in \st{S}}a_ib_ic_id_im^2
\right)
\\
  & =\Omega\left(
\sum_{j=1}^{N-1} \exp(\cut_G(v_j))\cdot m + 
\sum_{j=1}^{N-2}a_ib_ic_id_im^2
\right) \\
& = \Omega\left(
N sR^2 m + NR^2m^2
\right)  = \Omega\left(
N sR^2 m 
\right) .
\end{align*}
Above we use the fact that  $\exp(\cut_G(v_1)) = sR$, and for $j\in\{2,\ldots,N-1\}$, we have  $\exp(\cut_G(v_j)) = sR^2$. 
Sketching with \cref{alg:contract_sketch}, tree embedding and tensor train embedding all would yield this optimal asymptotic cost.

\end{document}